\newtheorem{theorem}{Theorem}
\newtheorem{corollary}{Corollary}
\newtheorem{lemma}{Lemma}
\newtheorem{proposition}{Proposition}
\newtheorem{definition}{Definition}
\newtheorem{remark}{Remark}
\def\R{{\mathbb R}}
\def\eps{{\varepsilon}}
\def\vol{{\rm vol}}
\def\1{\mathds{1}}
\def\intt{{\rm int}}
\def\supp{{\rm supp}}
\def\be{\begin{equation}}
\def\ee{\end{equation}}
\def\Lc{\mathcal{L}}
\def\epi{\rm{epi}}
\begin{document}

	\title{
	Affine invariant maps for log-concave functions 
		\footnote{Keywords:  2010 Mathematics Subject Classification: 52A20, 46N10 }}
	
	\author{Ben Li \thanks{Partially by ERC grant ERC-770127},  Carsten Sch\"utt  and Elisabeth M. Werner
		\thanks {Partially
		supported by NSF grant DMS-1811146 and by a Simons Fellowship}}
	
	\date{}

	\maketitle
	
	\begin{abstract}
Affine invariant points and maps for sets were introduced by Gr\"unbaum to study the symmetry structure of convex sets.		
We extend these notions to a functional setting.  
The role of symmetry of the set is now taken by evenness of the function.
We show that among the examples for affine invariant points are the classical center of gravity of a log-concave function and its Santal\'o point.
		We also show that the recently introduced floating functions  and the John- and L\"owner functions  are examples of affine invariant maps.
		Their centers  provide new  examples of affine invariant points for log-concave functions.

	\end{abstract}
	\vskip 3mm
\section{ Introduction}

Affine invariant quantities are central in  affine differential geometry  and  convex geometry and they  
and their associated inequalities have far reaching consequences for many  other areas of mathematics.
So, not surprisingly, the recent surge in the study of new affine invariants  has  contributed greatly to recent progress in understanding structural properties of convex bodies and resulted in  numerous applications,  from  approximation of convex bodies by polytopes \cite{Boeroetzky2000/2, BoeroetzkyReitzner2004, Reitzner, SchuettWerner2003}, to statistics \cite{NagySchuettWerner},  to information theory \cite {CaglarWerner, LutwakYangZhang2002/1, LutwakYangZhang2004/1, LutwakYangZhang2005, PaourisWerner2011, Werner2012/1, Werner2012} and even quantum information theory \cite{Aubrun-Szarek-Werner2010, Aubrun-Szarek-Werner2011, Aubrun-Szarek-Ye2014}. 
Examples of such new invariants are the $L_p$-affine surface areas of the  \(L_p\)-Brunn Minkowski theory, initiated by Lutwak in his groundbreaking paper \cite {Lutwak1996}, see also \cite{HanSlomkaWerner, Ludwig-Reitzner1999,  MW2, SchuettWerner2004, Stancu, WernerYe2008}, the Orlicz Brunn Minkowski theory \cite{GardnerHugWeilYe, XingYe, ZhuXingYe}, the theory of valuations \cite{Haberl:2012, Ludwig:2010c, Ludwig-Reitzner, Schuster2010, SchusterWannerer} and the 
theory of Fourier transformation (see e.g., Koldobsky's book \cite{Koldobsky}). 
\par
Affine invariant quantities are intimately related to a choice of position of a convex body.
The right choice of  position is important for
the study the  related isoperimetric inequalities. These positions include the  isotropic position, which arose from classical mechanics of the 19th
century and 
which is related to a famous open problem in 
convex geometry, the hyperplane conjecture (see, e.g., the survey \cite{KlartagWerner}).  
For a very long time the best results available there were due to Bourgain \cite{Bourgain1991} and Klartag \cite{Klartag06}.
Recent progress has been made by Chen \cite{Chen}.
Other positions are the John position, also called maximal volume ellipsoid position 
and  the L\"owner position, also called minimal volume ellipsoid position.
John and L\"owner position are related to the 
Brascamp-Lieb inequality and its reverse \cite{Ball1989, Fbarthe1998}, to  K. Ball's sharp reverse isoperimetric inequality \cite{Ball1991},
to the notion of volume ratio \cite{Szarek78, SzarekTomczakJ80}, which is defined as the $n$-th root of the volume of a convex body divided by the volume of its John ellipsoid
and 
which finds applications in functional analysis and Banach space theory \cite{BourgainMilman87, GPSTW, Schuett1982, SzarekTomczakJ80}.
John and L\"owner position  are even relevant in   quantum information theory \cite{Aubrun-Szarek-Werner2010, Aubrun-Szarek-Werner2011, SWZ11}. 
\par
A key structural property of convex bodies is that of symmetry.  It  is relevant in many problems. We only mention  the  
celebrated Blaschke Santal\'o inequality and its reverse, the Mahler conjecture,  about the the minimal volume product  of polar reciprocal  convex bodies. 
Mahler's conjecture is still open in dimensions 4 and higher.  
See e.g., \cite{BourgainMilman87, FHMRZ, IriyehShibata, Nazarov, NazarovPetrovRyaboginZvavitch, ReisnerSchuttWerner}  for partial results. A systematic study of symmetry was initiated by 
Gr\"unbaum in his seminal paper \cite{Gruenbaum1963}. The symmetry  structure of convex bodies is closely related to the affine structure of the bodies. Indeed, 
the  crucial notion in Gr\"unbaums's work is an affine notion,  that of   {\em affine invariant point}. The centroid and the Santal\'{o} point of convex bodies (with respect to which the volume of the polar body attains a minimum) are classical examples of  affine invariant points.
It is this notion that allows to analyze the symmetry situation. In a nutshell: the more affine invariant points, the fewer symmetries. 
Gr\"unbaums's work has been  further developed recently in \cite{MSW2015a, MSW2015b, Mordhorst}.
\par
Probabilisitic methods have become extremely useful in convex geometry. In this context, log-concave functions arise naturally from the uniform measure on convex bodies.
Extensive research has been devoted within the last ten years to extend the concepts and inequalities  from convex bodies to the setting of functions. 
In fact,  it was observed early that the Pr\'ekopa-Leindler inequality (see, e.g., \cite{GardnerBook2006, PivoravovRebollo,  PisierBook1989}) is the functional analog of the 
Brunn-Minkowski inequality (see, e.g., \cite{Gardner2002}) for convex bodies. Much progress has been made since and 
functional   analogs of many other geometric notions and inequalities were established. Among them are the 
functional Blaschke-Santal\'o inequality \cite{AKM2004, Ball1988, FradeliziMeyer2007, Leh2009}  and its reverse \cite{FradeliziMeyer2008},  
a functional affine isoperimetric inequality for log-concave functions which can be viewed as an inverse log-Sobolev inequality for entropy  \cite{AKSW2012, CFGLSW}, Alexandrov-Fenchel type inequalities \cite{CaglarWerner2, PaourisPivoravovValettas}, functional analogs of the floating body \cite{LSW2017},  John ellipsoids \cite{Alonso-Gutierrez2017, IvanovNazodi} and L\"{o}wner ellipsoids \cite{Alonso-Gutierrez2020, LiSchuettWerner2019}
and a theory of valuations, an important concept for convex bodies (e.g., \cite{Haberl:2012,  Ludwig:2010c, Ludwig-Reitzner1999, Ludwig-Reitzner, Schuster2010, SchusterWannerer}),  is currently being developed in the functional setting, e.g., \cite{ColesantiLudwigMussnig2017, ColesantiLudwigMussnig, Mussnig2017}.
 More examples can be found in e.g.,  \cite{Alonso-Gutierrez2016, CaglarYe, Colesanti, Colesanti-Fragala, Gardner2002, 
KoldobskyZvavitch, Rotem2012, Rotem2020}).
\vskip 2mm
In this paper, we extend the notion of affine invariant point  and affine invariant map to the functional setting. 
We start out by laying the  groundwork and provide the needed tools. We then put forward  the  definitions of 
affine contravariant points and  affine covariant mappings for log-concave functions and establish some of their basic properties.
For instance, the role of symmetry in the setting of convex bodies is now taken by the notion of evenness  in the functional setting.
We show that the centroid and  the Santal\'o point of a log-concave function are examples of the affine contravariant points and that  the newly developed notions of floating function \cite{LSW2017},  John function \cite{Alonso-Gutierrez2017}  and L\"owner  function \cite{LiSchuettWerner2019} are examples of  affine covariant mappings. 
This leads naturally to new affine contravariant points.

\vskip 5mm
\noindent
\section{ Notation and preliminaries}

Throughout the paper we will use the following notations.\\
The set of all non-singular affine transformations on \( \R^n\) is written as \( \mathcal A\), 
\[\mathcal A= \{ A=T+a: T\in GL(n), a\in \R^n\}.\]
The action of an affine transformation \(A:\R^n\to \R^n\) on a function \(f:\R^n\to \R\) is defined as \( A f(x)=f(Ax)\).
\par
\noindent
For \(z\in \R^n\), let \(S_z\) be a translation of a function  by \(z\), that is, for a function \(f\), 
\be
(S_zf)(x)=f(x+z) \label{defoftranslation}
\ee 
\vskip 2mm
\noindent
For $s \in \R$ and a function   $f: \R^n \rightarrow \R$, we denote by 
\be\label{superlevel}
G_f(s)=\{x\in \R^n: f(x)\ge s\}
\ee
  the super-level sets of  \( f\) and  by $\text{epi}(f)$ the epigraph of the  function \(f\), 
$$ 
\text{epi} (f)= 
\{(x,y)\in \R^{n+1}: x \in \R^n, y\ge f(x)\}.$$
\vskip 2mm
\noindent
Let  $K$ be a convex body  in \( \R^n\), i.e., a  convex compact subset $K$ of  \( \R^n\) with nonempty interior,  $\text{int}(K)$. 
We denote by $\text{vol}_n(K)$,  or simply $|K|$,  the volume of $K$ and by   \( \mu_K\)  the usual surface measure on   \( \partial K\), the boundary of $K$. It is the  restriction of the \(n-1\) dimensional Hausdorff measure to \( \partial K\). 
For convex bodies $K$ and $L$, their Hausdorff distance is
\begin{equation}\label{Hausdorff}
d_H(K,L)=\min\{\varepsilon: K\subset L+\eps B_2^n, L\subset K+\eps B_2^n\},
\end{equation}
where $B^n_2$ is the Euclidean unit ball. $B^n_2(a, r)$ is the Euclidean ball centered at $a$ with radius $r$. We write $B^n_2(r)= B^n_2(0, r)$.
By $\| \cdot\|$ we denote the Euclidean norm on $\mathbb{R}^n$.
For a linear operator $T:\mathbb R^{k}\to\mathbb R^{n}$ the operator norm is given by
\begin{equation}\label{Opnorm}
\|T\|_{\operatorname{Op}}=\sup_{\|x\|\leq1}\|Tx\|.
\end{equation}
\par
A function $f:  \R^n \rightarrow \R$ is said to be log-concave if it is of the form $f (x) = e^{-\psi(x)}$
where $\psi: \R^n \rightarrow \R \cup \{ \infty \}$ is a    convex function.
We always consider in this paper  log-concave functions that are upper semi continuous, integrable and non-degenerate,  
i.e., the interior of the support of $f$ is non-empty, $\operatorname{int} (\supp f) \neq \emptyset$.   This then implies that $0 < \int_{\R^n} f dx = \|f\|_1 < \infty$. 
Without loss of generality we may assume that $0\in\operatorname{int} (\supp f)$. 
Since $\operatorname{int} (\supp f) \neq \emptyset$ the function $\psi$ is proper,  i.e. $\psi(x)< \infty$ for at least one $x$. 
\par

We will  denote by 
$ LC(\mathbb{R}^n)$ or, in short, by $ LC$, the set of non-degenerate, upper semi continuous, 
integrable,  log-concave functions $f$,  such that $\psi$ is proper, 
 equipped with the \( L_1\)-norm,
\begin{equation}\label{LC}
LC =\{ f = e^{- \psi}: \mathbb{R}^n \to \mathbb{R}, \,  0<\|f\|_1 < \infty\}.
\end{equation}

We will also need the Legendre transform which we recall now. Let $z \in \mathbb{R}^n$ and let $\psi:\R^n \rightarrow \R \cup \{ \infty \}$ be a    convex function. Then
\begin{equation}\label{defLT}
\mathcal{L}_z \psi(y)=\sup_{x\in \R^n} [ \langle x-z,y-z\rangle -\psi(x)]
\end{equation}
is the Legendre transform of \( \psi\) with respect to $z$  \cite{AKM2004, FradeliziMeyer2007} .  
If  \( f=e^{-\psi} \) is log-concave, then
\be \label{polar}
f^z(y)=\inf_{x\in \text{supp}( f)}\frac{e^{-\langle x-z,y-z\rangle}}{f(x)}=e^{-\mathcal{L}_z \psi(y)}
\ee
is called the dual or polar function of $f$ with respect to $z$. 
In particular, when \( z=0\),  
$$ f^\circ(y)=\inf_{x\in \text{supp}( f)}\frac{e^{-\langle x,y\rangle}}{f(x)} = e^{-\mathcal{L}_0 \psi(y)},$$
where \( \Lc_0 \), 
 also denoted by \(\Lc\) for simplicity,  is the standard Legendre transform. 
 \par

In the next lemma we collect several  well known properties of the generalized 
Legendre transform.
They can be found in e.g., \cite{AKM2004} and \cite{FradeliziMeyer2007}.

\begin{lemma} \label{propoflegendretransf} Let  $\psi:\mathbb R^{n}\to\mathbb R\cup\{\infty\}$ be a convex function. 
Let $S_z$ be as in (\ref{defoftranslation}). Then
\newline
(i) \( \Lc\) and $ \Lc_z$ are  involutions, that is,  \( \Lc (\Lc \psi)=\psi \) and $ \Lc_z(\Lc_z\psi)=\psi$.
\newline	
(ii) $\Lc_z=S_{-z}\circ \Lc\circ S_{z}$.
\newline	
(iii) $ \Lc(S_z\psi)(y)=\Lc\psi-\langle z,y\rangle $.
\newline
(iv) Legendre transform reverses the oder relation, i.e., if $\psi_1 \leq \psi_2$, then $\Lc \psi_1 \geq \Lc \psi_2$.
\end{lemma}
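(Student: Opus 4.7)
The plan is to verify the four parts by direct computation from the definitions, using the Fenchel--Moreau theorem for the one nontrivial input. I would treat (ii) first, then deduce (i) for $\mathcal{L}_z$ from (i) for $\mathcal{L}$, and finish with (iii) and (iv), which are one-line unwrappings of the definitions.

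First I would establish (ii). Starting from the right-hand side, I would compute
\[
(S_{-z}\circ\mathcal{L}\circ S_z)\psi(y)=\mathcal{L}(S_z\psi)(y-z)=\sup_{x\in\mathbb{R}^n}\bigl[\langle x,y-z\rangle-\psi(x+z)\bigr],
\]
and then substitute $x'=x+z$ to get $\sup_{x'}[\langle x'-z,y-z\rangle-\psi(x')]=\mathcal{L}_z\psi(y)$. This is purely a change of variables.

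Next I would do (i). For the standard Legendre transform $\mathcal{L}$, involutivity is the Fenchel--Moreau theorem: since $\psi$ is proper, convex, and we are working in the category of lower semi-continuous convex functions (recall that $f\in LC$ is upper semi-continuous, hence $\psi=-\log f$ is lower semi-continuous), $\mathcal{L}(\mathcal{L}\psi)=\psi$. I would cite this as the one external input. For $\mathcal{L}_z$, I would chain (ii) with the fact that $S_z$ and $S_{-z}$ are mutually inverse:
\[
\mathcal{L}_z\circ\mathcal{L}_z=(S_{-z}\circ\mathcal{L}\circ S_z)\circ(S_{-z}\circ\mathcal{L}\circ S_z)=S_{-z}\circ\mathcal{L}\circ\mathcal{L}\circ S_z=S_{-z}\circ S_z=\mathrm{id}.
\]

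For (iii), the same change of variables used in (ii) gives
\[
\mathcal{L}(S_z\psi)(y)=\sup_{x}[\langle x,y\rangle-\psi(x+z)]=\sup_{x'}[\langle x',y\rangle-\psi(x')]-\langle z,y\rangle=\mathcal{L}\psi(y)-\langle z,y\rangle.
\]
For (iv), if $\psi_1\leq\psi_2$ pointwise then $\langle x,y\rangle-\psi_1(x)\geq\langle x,y\rangle-\psi_2(x)$ for every $x,y$, and taking the supremum in $x$ preserves the inequality, so $\mathcal{L}\psi_1\geq\mathcal{L}\psi_2$.

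There is no real obstacle here; the only subtle point is the appeal to Fenchel--Moreau for $\mathcal{L}\circ\mathcal{L}=\mathrm{id}$, which requires the lower semi-continuity of $\psi$ (equivalently, the upper semi-continuity of $f=e^{-\psi}$ assumed in the definition of $LC$). Everything else is a change of variables or a direct inequality, so I would keep the proof short and cite \cite{AKM2004, FradeliziMeyer2007} for the involutivity input.
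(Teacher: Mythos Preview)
Your proposal is correct. The paper itself does not give a proof of this lemma; it simply states the properties as well known and refers to \cite{AKM2004} and \cite{FradeliziMeyer2007}. Your direct verification via change of variables, together with the appeal to Fenchel--Moreau for the involutivity of $\mathcal{L}$, is exactly the standard argument and fills in what the paper leaves to the references.
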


We now list some basic well-known facts on log-concave functions which will be used  throughout  the paper. More on log-concave functions can be found in 
e.g., \cite{Rockafellarbook1970}. 
\begin{lemma}\label{SuperLevel1}\cite{LiSchuettWerner2019} 
If \(f\) is a non-degenerate integrable log-concave function,  then \( G_f(t)\) is convex and compact 
and has affine dimension $n$,  for \(0<t < \|f\|_\infty\).
\end{lemma}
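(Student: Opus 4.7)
The plan is to break the statement into four properties of $G_f(t)$: convexity, closedness, non-empty interior (which is equivalent to affine dimension $n$ for a convex set in $\R^n$), and boundedness.

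First, writing $f = e^{-\psi}$ with $\psi$ convex, the super-level set $G_f(t)$ coincides with the sub-level set $\{\psi \le -\log t\}$. Since sub-level sets of convex functions are convex, $G_f(t)$ is convex. Closedness comes for free from the upper semi-continuity of $f$: $G_f(t) = f^{-1}([t,\infty))$ is the preimage of a closed set.

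The substantive step is to show that $G_f(t)$ has non-empty interior. Set $c = -\log t$; the hypothesis $t < \|f\|_\infty$ gives $c > \inf \psi$, so there exists $x_1$ with $\psi(x_1) < c$. In general $x_1$ need not lie in the interior of $\dom(\psi)$, so I would pull it inside: fix any $x_2 \in \intt(\dom(\psi))$, which exists because the non-degeneracy assumption forces $\intt(\supp f)$ to be non-empty and $\intt(\supp f) = \intt(\dom(\psi))$. Setting $x_\lambda = \lambda x_2 + (1-\lambda) x_1$ for small $\lambda > 0$, one has $x_\lambda \in \intt(\dom(\psi))$ by a standard fact for convex sets, while convexity of $\psi$ gives $\psi(x_\lambda) \le \lambda \psi(x_2) + (1-\lambda)\psi(x_1) < c$ once $\lambda$ is small enough. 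Convex functions are continuous on the interior of their domain, so there is a whole neighborhood of $x_\lambda$ inside $\{\psi < c\} \subset G_f(t)$. A convex set with non-empty interior in $\R^n$ has affine dimension $n$.

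For boundedness I would argue by contradiction and exploit integrability. Since $G_f(t)$ is a closed convex set containing a ball $B_2^n(x_0, r)$, if it were unbounded it would contain a ray $\{x_0 + s v : s \ge 0\}$, and then by convexity the entire half-infinite cylinder $\conv(B_2^n(x_0,r) \cup \{x_0 + sv : s \ge 0\})$, which has infinite Lebesgue measure. But then
\[
\|f\|_1 = \int_{\R^n} f\,dx \;\ge\; \int_{G_f(t)} f\,dx \;\ge\; t \cdot \vol_n(G_f(t)) \;=\; \infty,
\]
contradicting integrability. Combining closedness with boundedness yields compactness.

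The only delicate point is step three, ensuring the witness point lies in $\intt(\dom(\psi))$ rather than on the boundary where $\psi$ may fail to be continuous; the rest is routine bookkeeping against the definitions in $LC$.
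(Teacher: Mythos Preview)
Your proof is correct. The paper does not actually prove this lemma; it merely states that a proof can be found in \cite{LiSchuettWerner2019}, so there is no in-paper argument to compare against. Your decomposition into convexity, closedness, non-empty interior, and boundedness is the natural one, and each step is sound. One minor simplification: since $\psi$ is continuous on $\intt(\dom\psi)$ and $\partial(\dom\psi)$ is Lebesgue-null, the essential supremum $\|f\|_\infty$ already equals $\sup_{\intt(\dom\psi)} f$, so the witness $x_1$ with $\psi(x_1)<c$ can be taken directly in $\intt(\dom\psi)$, making the interpolation step with $x_\lambda$ unnecessary---though including it does no harm and covers the case where $\|f\|_\infty$ is read as a genuine supremum.
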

\noindent
A proof of Lemma  \ref{SuperLevel1} can be found for instance in \cite{LiSchuettWerner2019}.
\vskip 2mm
\noindent 
The following fact  is a direct corollary of the functional Blaschke-Santal\'{o} inequality \cite{AKM2004, Ball1988} and the functional reverse Santal\'o inequality \cite{FradeliziMeyer2008, KlartagMilman2005}.
\begin{lemma}\label{Polar1}
	Let \( f=e^{-\psi}\) be a non-degenerate, integrable, log-concave function such that $0$ is in the interior of the support of $f$.
	Then $f^\circ$ is again a non-degenerate,  integrable log-concave function and thus 
	  \(0<\int_{\R^n} f^\circ(x)dx<\infty\). Furthermore, 
	  $f^z$ is again a non-degenerate, integrable log-concave function, i.e., \(0<\int_{\R^n} f^z(x)dx<\infty\),  provided that  $z$ is in the interior of $\supp(f)$.
\end{lemma}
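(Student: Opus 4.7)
The plan is to verify each defining property of $LC(\mathbb{R}^n)$ for $f^\circ$, and then reduce the statement for $f^z$ to the case $z=0$. Log-concavity and upper semi-continuity of $f^\circ=e^{-\mathcal{L}\psi}$ are immediate, since $\mathcal{L}\psi(y)=\sup_x[\langle x,y\rangle-\psi(x)]$ is a supremum of functions that are affine in $y$, hence convex and lower semi-continuous. The substantive content is therefore that $f^\circ$ has non-degenerate support and $0<\int f^\circ<\infty$.

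For the upper bound $\int f^\circ<\infty$ I would invoke the functional Blaschke-Santal\'o inequality of \cite{AKM2004, Ball1988, FradeliziMeyer2007}, which after passing to the Santal\'o point gives $\int f\cdot\int f^{z_0}\le(2\pi)^n$; a self-contained alternative uses only the hypothesis $0\in\inte\supp f$, which furnishes $r,M>0$ with $\psi\le M$ on $B_2^n(0,r)$, so that taking $x=ry/\|y\|$ in the definition of $\mathcal{L}\psi$ yields $\mathcal{L}\psi(y)\ge r\|y\|-M$ and hence $f^\circ(y)\le e^{M-r\|y\|}\in L^1(\mathbb{R}^n)$. The lower bound $\int f^\circ>0$ follows from the reverse functional Santal\'o inequality of \cite{FradeliziMeyer2008, KlartagMilman2005}, which asserts $\int f\cdot\int f^\circ\ge c^n$; alternatively, $\|f\|_\infty<\infty$ forces $\psi$ bounded below, so $\mathcal{L}\psi(0)=-\inf\psi<\infty$, giving $f^\circ(0)>0$, and continuity of log-concave functions on the interior of their support then gives positivity on a whole neighborhood. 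Non-degeneracy of $\supp f^\circ$ is automatic, since a convex set of positive Lebesgue measure has nonempty interior.

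To extend to general $z\in\inte\supp f$, I would use Lemma~\ref{propoflegendretransf}(ii), which gives $\mathcal{L}_z\psi=S_{-z}\circ\mathcal{L}\circ S_z\psi$ and hence $f^z(y)=(S_z f)^\circ(y-z)$; a change of variables then yields $\int f^z=\int (S_z f)^\circ$. Since $z\in\inte\supp f$ is equivalent to $0\in\inte\supp(S_z f)$, the already-proved case $z=0$ applied to $S_z f$ completes the proof. The main obstacle is essentially bookkeeping: tracking the translation conventions through the shift identity and confirming that each required property of $f^z$ transfers from the corresponding property of $(S_z f)^\circ$; no deeper input is needed once the Santal\'o-type bounds (or their direct substitutes) are in place.
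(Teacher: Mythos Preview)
Your proposal is correct and aligns with the paper's approach: the paper does not give a proof at all, merely stating that the lemma is ``a direct corollary of the functional Blaschke-Santal\'o inequality \cite{AKM2004, Ball1988} and the functional reverse Santal\'o inequality \cite{FradeliziMeyer2008, KlartagMilman2005}.'' Your write-up supplies exactly the details the paper leaves implicit---the routine checks of log-concavity and upper semi-continuity, the reduction of $f^z$ to $f^\circ$ via Lemma~\ref{propoflegendretransf}(ii), and the non-degeneracy argument---and your elementary alternatives (the bound $\mathcal{L}\psi(y)\ge r\|y\|-M$ from $0\in\inte\supp f$, and $f^\circ(0)>0$ from $\inf\psi>-\infty$) are in fact cleaner than invoking the full Santal\'o machinery, since the Blaschke--Santal\'o bound as you note is stated at the Santal\'o point rather than at $0$.
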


A proof of the following two lemmas can be found in \cite{LiSchuettWerner2019}. 
There, and elsewhere, we denote for a function $f$ by $\|f\|_p$, $1 \leq p \leq \infty$,  its $L_p$-norm.

\begin{lemma} \label{ptwcontoflevelset}\cite{LiSchuettWerner2019}
Let \((f_m)_{m \in \mathbb{N}}\) be a sequence of integrable, log-concave functions
that converges pointwise to the integrable log-concave function $f$.  
Then, for every $s$ with $0< s < \|f\|_\infty$  the sequence of super-level sets $(G_{f_m}(s))_{m=1}^{\infty}$
converges in Hausdorff metric to the super-level set $G_{f}(s)$.	
\end{lemma}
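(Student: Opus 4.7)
The plan is to establish, for every $\epsilon > 0$ and all sufficiently large $m$, both inclusions $G_{f_m}(s) \subset G_f(s) + \epsilon B_2^n$ and $G_f(s) \subset G_{f_m}(s) + \epsilon B_2^n$, which together give the Hausdorff convergence. Fix $\delta > 0$ so that $s + \delta < \|f\|_\infty$ (possible since $s < \|f\|_\infty$). The key preliminary fact is that $f_m \to f$ uniformly on compact subsets of $U := \operatorname{int}(\operatorname{supp} f)$: writing $f_m = e^{-\psi_m}$ and $f = e^{-\psi}$, the convex functions $\psi_m$ converge pointwise to the finite convex function $\psi$ on the open convex set $U$, and the classical theorem of Rockafellar on pointwise-to-uniform convergence of finite convex functions on open convex sets (\cite{Rockafellarbook1970}) applies.

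For the inclusion $G_f(s) \subset G_{f_m}(s) + \epsilon B_2^n$: by Lemma~\ref{SuperLevel1}, the set $G_f(s+\delta)$ is compact, convex, and $n$-dimensional, so has an interior point $x_0$. For $\lambda \in (0, 1)$ the compact set $K_\lambda := (1-\lambda) G_f(s) + \lambda x_0$ lies in $U$, and log-concavity gives $f \geq s^{1-\lambda}(s+\delta)^\lambda > s$ uniformly on $K_\lambda$ with a positive gap. Uniform convergence then forces $K_\lambda \subset G_{f_m}(s)$ for all large $m$, and since $K_\lambda$ lies within Hausdorff distance $\lambda \operatorname{diam}(G_f(s) \cup \{x_0\})$ of the bounded set $G_f(s)$, choosing $\lambda$ small enough yields the inclusion.

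For the reverse inclusion $G_{f_m}(s) \subset G_f(s) + \epsilon B_2^n$, I would argue by contradiction: assume a subsequence (still denoted $m$) satisfies $x_m \in G_{f_m}(s)$ and $\operatorname{dist}(x_m, G_f(s)) \geq \epsilon$. Pick an $n$-simplex $\Delta = \operatorname{conv}(p_0, \ldots, p_n) \subset \operatorname{int}(G_f(s+\delta))$ with interior point $c_0$ (exists by Lemma~\ref{SuperLevel1}). By uniform convergence, $f_m \geq s + \delta/2$ on $\Delta$ eventually, and iterated log-concavity yields $f_m \geq s$ on $C_m := \operatorname{conv}(\Delta \cup \{x_m\})$. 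If $\|x_m\| \to \infty$ along a further subsequence with $x_m/\|x_m\| \to \theta$, then for each fixed $t > 0$ one may write $y_t := c_0 + t\theta = (1-\beta_m) p_m + \beta_m x_m$ with $\beta_m = t/\|x_m - c_0\| \to 0$ and $p_m \to c_0 \in \operatorname{int}(\Delta)$ (elementary calculation); thus $y_t \in C_m$ and $f_m(y_t) \geq s$ for large $m$, so pointwise convergence at the fixed point $y_t$ yields $f(y_t) \geq s$. Hence $f \geq s$ on the unbounded ray $\{c_0 + t\theta : t \geq 0\}$, making $G_f(s)$ unbounded, contradicting Lemma~\ref{SuperLevel1}.

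Therefore $\{x_m\}$ is bounded; extract $x_m \to x^*$ with $\operatorname{dist}(x^*, G_f(s)) \geq \epsilon$. The convex bodies $C_m$ Hausdorff-converge to $C := \operatorname{conv}(\Delta \cup \{x^*\})$, so any $q \in \operatorname{int}(C)$ lies in $C_m$ for all large $m$ (a standard separation argument); therefore $f_m(q) \geq s$, and pointwise convergence gives $f(q) \geq s$. Taking $q_n = (1 - 1/n)x^* + (1/n)c_0 \in \operatorname{int}(C)$ with $q_n \to x^*$ and using upper semicontinuity of $f$ at $x^*$ gives $f(x^*) \geq \limsup_n f(q_n) \geq s$, i.e.\ $x^* \in G_f(s)$, contradicting $\operatorname{dist}(x^*, G_f(s)) \geq \epsilon > 0$. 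The main obstacle is the unboundedness case: pointwise convergence alone a priori allows mass to escape to infinity, so ruling it out requires combining log-concavity (to propagate the bound $f_m \geq s$ from $x_m$ throughout the growing region $C_m$) with integrability of $f$ via the compactness of $G_f(s)$ from Lemma~\ref{SuperLevel1}, rather than relying on uniform integrability of the $f_m$'s.
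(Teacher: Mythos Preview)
The paper itself does not prove this lemma; it is quoted from \cite{LiSchuettWerner2019} with the remark ``A proof of the following two lemmas can be found in \cite{LiSchuettWerner2019}.'' So there is no in-paper argument to compare against.

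Your argument is correct. One small technical point deserves a sentence of justification: to invoke Rockafellar's pointwise-to-uniform theorem on $U=\operatorname{int}(\operatorname{supp} f)$ you need the $\psi_m$ to be finite on an open convex set containing the compact set in question, not merely to converge pointwise there. This is easy to arrange: enclose $K_\lambda$ (respectively $\Delta$) in an $n$-simplex $S\subset U$; pointwise convergence at the $n+1$ vertices gives $\psi_m(v_i)<M$ for all large $m$, and convexity then forces $\psi_m<M$ on all of $S$, so the theorem applies on $\operatorname{int}(S)$. Everything else---the log-concave propagation $f\ge s^{1-\lambda}(s+\delta)^\lambda$ on $K_\lambda$, the propagation $f_m\ge s$ on $C_m=\operatorname{conv}(\Delta\cup\{x_m\})$, the ray argument ruling out $\|x_m\|\to\infty$ via compactness of $G_f(s)$ from Lemma~\ref{SuperLevel1}, and the closing step $f(x^*)\ge\limsup_n f(q_n)\ge s$ using upper semicontinuity of $f$ (legitimate by the paper's standing convention that all functions in $LC$ are upper semicontinuous)---is in order.
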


\begin{lemma}\label{convergenceinfnorm}\cite{LiSchuettWerner2019}
	Let \((f_m)_{m \in \mathbb{N}}\) and \(f\) be  integrable log-concave functions   such that \( f_m\to f\) pointwise  on \( \R^n\setminus \partial \overline{\supp(f)}\). Then	
	\( \|f_m^\circ\|_\infty \to \|f^\circ\|_\infty\).	
\end{lemma}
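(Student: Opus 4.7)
The plan is to reduce $\|f^\circ\|_\infty$ to the single number $1/f(0)$, after which the pointwise convergence hypothesis finishes the argument immediately.

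First I would establish the identity $\|g^\circ\|_\infty = 1/g(0)$ for every $g = e^{-\varphi}\in LC$ with $g(0)>0$. By (\ref{polar}) we have $\|g^\circ\|_\infty = \exp\!\bigl(-\inf_{y\in\R^n}\mathcal{L}\varphi(y)\bigr)$, and because $\varphi$ is proper, lower semi-continuous (equivalent to upper semi-continuity of $g$) and convex, Lemma~\ref{propoflegendretransf}(i) yields $\mathcal{L}(\mathcal{L}\varphi) = \varphi$. Evaluating the biconjugate at the origin gives
\begin{equation*}
\varphi(0) \;=\; \mathcal{L}(\mathcal{L}\varphi)(0) \;=\; \sup_{y\in\R^n}\bigl[-\mathcal{L}\varphi(y)\bigr] \;=\; -\inf_{y\in\R^n}\mathcal{L}\varphi(y),
\end{equation*}
so $\|g^\circ\|_\infty = e^{\varphi(0)} = 1/g(0)$.

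With the identity in hand, I would invoke the hypothesis. By the standing convention $0\in\operatorname{int}(\supp f)$, so $0\in\R^n\setminus\partial\overline{\supp f}$ and hence $f_m(0)\to f(0)>0$ by assumption. For $m$ large enough, $f_m(0)>0$, so the identity applies to each such $f_m$ as well; by continuity of $t\mapsto 1/t$ at $t=f(0)$,
\begin{equation*}
\|f_m^\circ\|_\infty \;=\; \frac{1}{f_m(0)} \;\longrightarrow\; \frac{1}{f(0)} \;=\; \|f^\circ\|_\infty.
\end{equation*}

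The only point needing care is observing that the identity $\|g^\circ\|_\infty = 1/g(0)$ requires only $\varphi(0)<\infty$ (i.e.\ $g(0)>0$) rather than $0\in\operatorname{int}(\supp g)$, because the biconjugate step is purely algebraic. This is precisely why pointwise convergence at the single point $0$---without any uniform control on the supports of the $f_m$---already suffices for the conclusion.
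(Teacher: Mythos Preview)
Your argument is correct. The key identity $\|g^\circ\|_\infty = 1/g(0)$ follows cleanly from the involution property of the Legendre transform (Lemma~\ref{propoflegendretransf}(i)) evaluated at the origin, and the paper's standing convention $0\in\operatorname{int}(\supp f)$ guarantees both that $f(0)>0$ and that $0$ lies in the set where pointwise convergence is assumed. Everything then reduces to $1/f_m(0)\to 1/f(0)$.

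The paper does not give its own proof of this lemma; it is quoted from \cite{LiSchuettWerner2019}. Your route is about as short as one could hope for, since it collapses the sup-norm of the polar to a single pointwise value via Fenchel--Moreau duality. One remark worth making explicit in your write-up: the identity $\|g^\circ\|_\infty=1/g(0)$ does not need the assumption $0\in\operatorname{int}(\supp g)$ for the $f_m$ (as you note), but Lemma~\ref{propoflegendretransf}(i) does rely on $\psi_m$ being lower semicontinuous, which is exactly the upper semicontinuity of $f_m$ built into the paper's standing hypotheses on log-concave functions. You already flag this, but it is the one place where a reader might otherwise wonder whether $\mathcal L(\mathcal L\psi_m)=\psi_m$ holds pointwise at $0$.
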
 

The next lemma is  Exercise 10, p. 187 of Folland \cite{folland}.

\begin{lemma} \label{l1conviffconvofinteg}\cite{folland}
	Suppose \( 1\le p<\infty\). If \( f_n,f\in L^p\) and \( f_n\to f\) a.e., then \( \|f_n-f\|_p\to 0\) iff \( \|f_n\|_p\to \|f\|_p\).
\end{lemma}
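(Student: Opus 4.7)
The statement is the classical ``convergence of norms implies $L^p$ convergence under a.e.\ convergence'' fact (sometimes called a Brezis--Lieb type lemma, or Scheff\'e's lemma for $p=1$). I would prove the two directions separately.

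The forward implication is routine: if $\|f_n - f\|_p \to 0$, then the reverse triangle inequality for the $L^p$ norm,
\[
\bigl|\|f_n\|_p - \|f\|_p\bigr| \leq \|f_n - f\|_p,
\]
immediately yields $\|f_n\|_p \to \|f\|_p$. I would dispatch this in one line.

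For the reverse implication, the plan is to use a generalized dominated convergence theorem with a \emph{varying} dominating sequence. The key pointwise inequality is
\[
|f_n - f|^p \leq 2^{p-1}\bigl(|f_n|^p + |f|^p\bigr),
\]
which follows from the convexity of $t \mapsto t^p$ for $p \geq 1$. Set $h_n = |f_n - f|^p$ and $g_n = 2^{p-1}(|f_n|^p + |f|^p)$. Then $0 \leq h_n \leq g_n$, and since $f_n \to f$ a.e., we have $h_n \to 0$ a.e.\ and $g_n \to 2^p |f|^p$ a.e. Moreover, the hypothesis $\|f_n\|_p \to \|f\|_p$ gives
\[
\int g_n \,dx = 2^{p-1}\bigl(\|f_n\|_p^p + \|f\|_p^p\bigr) \longrightarrow 2^p \|f\|_p^p = \int 2^p |f|^p \, dx < \infty.
\]
Now I would apply the generalized dominated convergence theorem (Fatou applied to $g_n - h_n \geq 0$, combined with $\int g_n \to \int g$) to conclude $\int h_n \to 0$, i.e.\ $\|f_n - f\|_p \to 0$.

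The main (and only real) obstacle is justifying the generalized DCT step, since the classical DCT requires a fixed dominating function. I would handle this by a short Fatou argument: from $g_n - h_n \geq 0$ and $g_n - h_n \to 2^p |f|^p$ a.e., Fatou yields $\int 2^p |f|^p \leq \liminf \int (g_n - h_n) = \int 2^p |f|^p - \limsup \int h_n$, so $\limsup \int h_n \leq 0$, which together with $\int h_n \geq 0$ gives $\int h_n \to 0$. This completes the proof.
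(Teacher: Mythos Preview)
Your proof is correct. The paper does not supply its own proof of this lemma; it simply quotes the statement as Exercise~10, p.~187 of Folland's \emph{Real Analysis}, so there is nothing to compare against. Your argument---reverse triangle inequality for the easy direction, and the Fatou trick applied to $g_n - h_n = 2^{p-1}(|f_n|^p + |f|^p) - |f_n - f|^p \geq 0$ for the substantive direction---is exactly the standard solution to that exercise (and is essentially the generalized dominated convergence theorem that the paper itself invokes elsewhere, citing Folland p.~59, Exercise~20).
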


\begin{lemma}\label{ConStetSup}\cite{Rockafellarbook1970}
Let $\psi$ be a convex function on $\mathbb R^{n}$. Then $\psi$ is continuous on the interior of its domain.
\end{lemma}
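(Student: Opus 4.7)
The claim is a well-known piece of convex analysis, and the plan is the standard three-step argument: local boundedness above, local boundedness below via reflection, then local Lipschitz continuity via a three-points-on-a-line convexity estimate.

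Fix $x_{0}\in \operatorname{int}(\operatorname{dom}\psi)$. First I would produce a neighborhood of $x_{0}$ on which $\psi$ is bounded above. Since $x_{0}$ is interior to the domain, I can choose an $n$-simplex $\Delta=\operatorname{conv}\{v_{0},\dots,v_{n}\}\subset \operatorname{dom}\psi$ having $x_{0}$ in its interior. Any $x\in\Delta$ is a convex combination $x=\sum_{i}\lambda_{i}v_{i}$, so Jensen's inequality for the convex function $\psi$ gives $\psi(x)\le \sum_{i}\lambda_{i}\psi(v_{i})\le M$, where $M=\max_{i}\psi(v_{i})$. Pick $r>0$ with $B_{2}^{n}(x_{0},2r)\subset\Delta$; then $\psi\le M$ on $B_{2}^{n}(x_{0},2r)$.

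Next I would establish a lower bound on $B_{2}^{n}(x_{0},r)$ by the reflection trick. For $x\in B_{2}^{n}(x_{0},r)$ the reflected point $2x_{0}-x$ also lies in $B_{2}^{n}(x_{0},r)\subset B_{2}^{n}(x_{0},2r)$, and since $x_{0}=\tfrac12 x+\tfrac12(2x_{0}-x)$, convexity yields $\psi(x_{0})\le \tfrac12\psi(x)+\tfrac12\psi(2x_{0}-x)\le \tfrac12\psi(x)+\tfrac12 M$, whence $\psi(x)\ge 2\psi(x_{0})-M$. Combined with the upper bound, $|\psi|\le M'$ on $B_{2}^{n}(x_{0},r)$ for some finite $M'$.

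Finally I would upgrade local boundedness to local Lipschitz continuity, which implies continuity at $x_{0}$. Take $x,y\in B_{2}^{n}(x_{0},r/2)$ with $x\ne y$ and let $\alpha=\|y-x\|/r\le 1$. Set $z=x+\tfrac{r}{\|y-x\|}(y-x)$, so $z\in B_{2}^{n}(x_{0},r)$ and $y=(1-\alpha)x+\alpha z$; convexity gives
\[
\psi(y)-\psi(x)\le \alpha\bigl(\psi(z)-\psi(x)\bigr)\le \frac{2M'}{r}\,\|y-x\|.
\]
Swapping the roles of $x$ and $y$ (replacing $z$ by the analogous point on the opposite side) yields the matching lower bound, so $\psi$ is Lipschitz on $B_{2}^{n}(x_{0},r/2)$ with constant $2M'/r$. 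In particular $\psi$ is continuous at $x_{0}$, and since $x_{0}$ was an arbitrary interior point this proves the lemma.

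The only step that requires any care is the first one: showing local boundedness above genuinely uses that $x_{0}$ is interior to the domain (so that a full-dimensional simplex fits inside), because without this a convex function can fail spectacularly to be continuous on the relative boundary of its domain. Once local boundedness is in hand, the remaining two steps are elementary consequences of the defining convexity inequality.
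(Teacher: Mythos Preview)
The paper does not prove this lemma at all; it simply quotes the result and cites Rockafellar's book. Your argument is the standard textbook proof and is essentially correct.

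One small slip worth flagging: with $x,y\in B_{2}^{n}(x_{0},r/2)$ and $z=x+\tfrac{r}{\|y-x\|}(y-x)$, you only get $\|z-x_{0}\|\le \|x-x_{0}\|+r\le 3r/2$, so $z$ need not lie in $B_{2}^{n}(x_{0},r)$ as you claim. This does no damage, since $z$ still lies in $B_{2}^{n}(x_{0},2r)\subset\Delta$, where the upper bound $\psi\le M$ is available; together with the lower bound $\psi(x)\ge 2\psi(x_{0})-M$ (which holds because $x\in B_{2}^{n}(x_{0},r)$), the Lipschitz estimate goes through with a possibly different constant.
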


\noindent
The following lemma is a consequence of  known facts on convergence of convex and log-concave functions (see \cite{Rockafellarbook1970}) 
and Lemma 3.2 of \cite{AKM2004}.

\begin{lemma} \label{L1convtoptwconv}
Let $(f_m)_{m =1}^{\infty}$ and \(f\) be non-degenerate integrable log-concave functions.  
Let $(x_m)_{m \in \mathbb{N}}$ and $x$ be in the interior of $\supp(f)$.
Then we have
\newline
(i) The sequence $( f_m)_{m=1}^{\infty}$ converges in \(L_1\) to $f$ if and only if $( f_m)_{m=1}^{\infty}$
converges pointwise to $f$ on \( \R^n\setminus \partial \overline{\supp(f)}\).
\newline
(ii)  If  the sequence $( f_m)_{m=1}^{\infty}$ converges pointwise to $f$ on \( \R^n\setminus \partial \overline{\supp(f)}\), 
then  $( f_m)_{m=1}^{\infty}$ converges uniformly  on the compact  
subsets of $\supp(f)$ to $f$.
\newline
(iii) If the sequence $( f_m)_{m=1}^{\infty}$ converges pointwise on \( \R^n\setminus \partial \overline{\supp(f)}\) to $f$
and if the sequence $(x_m)_{m=1}^{\infty}$ converges in $\mathbb R^{n}$ to $x$, then 
the sequence $( f_m^{x_m})_{m=1}^{\infty}$ converges pointwise on \( \R^n\setminus \partial \overline{\supp(f)}\) to $f^{x}$
In particular, the sequence $( f_m^\circ)_{m=1}^{\infty}$ converges in \(L_1\) to $f^{\circ}$.
\end{lemma}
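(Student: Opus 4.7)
I will attack the three parts separately, each relying on different tools, and treat (iii) as the main obstacle.

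For part (i), I would prove the forward direction by the ``every subsequence has a further subsequence'' device. Given $L_1$ convergence of $f_m$ to $f$, any subsequence has a further subsequence converging a.e.\ to $f$; equivalently $\psi_{m_k}=-\log f_{m_k}$ converges a.e.\ on $\intt(\supp f)$ to $\psi=-\log f$. By the classical Rockafellar theorem that a.e.\ convergence of a sequence of finite convex functions on an open convex set forces pointwise convergence at every interior point (and in fact uniform convergence on compacta), $\psi_{m_k}\to\psi$ everywhere on $\intt(\supp f)$, and outside $\overline{\supp f}$ the values $\psi_{m_k}\to\infty$ follow similarly. Hence the whole sequence converges pointwise on $\R^n\setminus \partial\overline{\supp f}$. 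For the converse, I would invoke Lemma 3.2 of \cite{AKM2004}: pointwise convergence of log-concave functions toward an integrable log-concave limit produces a uniform integrable log-concave majorant, and dominated convergence then yields $L_1$ convergence.

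Part (ii) is the same Rockafellar fact quoted above applied positively: $\psi_m\to\psi$ pointwise on the open set $\intt(\supp f)$ (by hypothesis), and finite convex functions converging pointwise on an open convex set converge uniformly on compact subsets (Theorem 10.8 in Rockafellar's book, together with Lemma \ref{ConStetSup}). Exponentiating yields uniform convergence of $f_m$ to $f$ on compact subsets of $\intt(\supp f)$; extending to compact subsets of $\supp f$ that meet the boundary is handled by noting that both $f_m$ and $f$ are upper semicontinuous and vanish on $\partial\overline{\supp f}$ uniformly (up to an $\varepsilon$ inflation of a compact subset of the interior), combined with the hypothesis that $f_m\to f=0$ pointwise off $\overline{\supp f}$.

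For part (iii), the key is the pointwise convergence $\mathcal L_{x_m}\psi_m(y)\to\mathcal L_x\psi(y)$; the ``in particular'' statement then reduces to part (i) applied to the log-concave functions $f_m^\circ$ and $f^\circ$ (which lie in $LC$ by Lemma \ref{Polar1}, with $0\in\intt(\supp f^\circ)$ since $f^\circ(y)>0$ in a neighborhood of $0$). Write
\[
\mathcal L_{x_m}\psi_m(y)=\sup_{u\in\R^n}\bigl[\langle u-x_m, y-x_m\rangle -\psi_m(u)\bigr].
\]
The easy inequality $\liminf_m \mathcal L_{x_m}\psi_m(y)\ge \mathcal L_x\psi(y)$ follows by picking a near-maximizer $u^*$ for $\mathcal L_x\psi(y)$ inside $\intt(\supp f)$ and using the uniform convergence from part (ii) together with $x_m\to x$. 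The main obstacle is the reverse inequality, for which I must keep the near-maximizers $u_m$ of $\mathcal L_{x_m}\psi_m(y)$ in a fixed compact set. This is where I would use that the $\psi_m$ (equivalently the $f_m$) are log-concave with integrable limit: by Lemma \ref{convergenceinfnorm} and standard log-concavity estimates, $\psi_m$ admits a uniform lower affine minorant that goes to $+\infty$ at infinity, so the expression $\langle u-x_m,y-x_m\rangle-\psi_m(u)$ achieves its sup over a common compact set (depending on $y$ but not on $m$). On this compact set the uniform convergence from part (ii) yields $\limsup_m \mathcal L_{x_m}\psi_m(y)\le \mathcal L_x\psi(y)$, completing (iii).
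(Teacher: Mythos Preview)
The paper does not actually prove this lemma; it states only that the result ``is a consequence of known facts on convergence of convex and log-concave functions (see \cite{Rockafellarbook1970}) and Lemma 3.2 of \cite{AKM2004}.'' Your proposal fleshes out exactly these two ingredients --- Rockafellar's theorem that pointwise convergence of convex functions on an open set forces local uniform convergence, and the AKM integrable-majorant lemma --- so the approach is the one the authors have in mind.

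Two small points are worth tightening. In part (iii), the reference to Lemma~\ref{convergenceinfnorm} does not by itself furnish the uniform coercive lower bound on $\psi_m$ that you need to trap the near-maximizers $u_m$ in a fixed compact set; what you actually want is the explicit majorant $f_m(x)\le e^{-\|x\|/\rho+t}$ produced in the proof of Lemma~3.2 of \cite{AKM2004} (the same device you already invoked in the converse of part (i)), which gives $\psi_m(u)\ge \|u\|/\rho - t$ uniformly in $m$ and hence forces $\langle u-x_m,y-x_m\rangle-\psi_m(u)\to-\infty$ as $\|u\|\to\infty$, uniformly in $m$. In part (ii), your extension from compact subsets of $\intt(\supp f)$ to compacta meeting $\partial\overline{\supp f}$ relies on the claim that $f$ vanishes on the boundary, which is false in general (take $f=\1_K$); Rockafellar's theorem only yields uniform convergence on compacta contained in the interior, and indeed the paper only ever invokes (ii) in that setting, so the statement is slightly loose rather than your argument being wrong.
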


\section{Affine contravariant points  and covariant mappings  for log-concave functions}

\subsection {The Definitions}

Gr\"unbaum \cite{Gruenbaum1963} (see also Meyer, Sch\"utt  and Werner \cite{MSW2015a}) gave  definitions of affine invariant points 
and affine invariant maps for convex bodies. We now extend those definitions to functions. While they can be defined for any function, we will  concentrate in this section on 
 log-concave functions and thus restrict the definition to this class. Note also that formally affine invariant points are maps.
 \par
 \noindent
We start with  the definition of affine  {\em contravariant} points for log-concave functions. 

\begin{definition}\label{aip}
	A  map
	$p:{ LC} \rightarrow\mathbb R^{n}$ is called an affine contravariant   point,  if $p$ is continuous and if
	for every nonsingular affine map $A:\mathbb R^{n}\rightarrow \mathbb R^{n}$ one has
	\begin{equation} \label{def1}
	p(A f)=A^{-1}(p(f)).
	\end{equation}
\end{definition}	
\par
\noindent
Continuity in this definition means that \( p(f_m)\to p(f)\) whenever \( f_m,f\in LC\) and the sequence  \( 
(f_m)_{m \in \mathbb{N}}\) converges to \(f\) in  the \( L_1\)-norm.
\par
\noindent
We put 
	 $\mathfrak{P}$ be the set of affine contravariant points on $LC$, 
	\begin{equation} \label{def:aip}
	\mathfrak{P}=\{ p: {LC} \rightarrow\mathbb R^{n}  \big|  \  p \  \text{ is  an affine contravariant point} \},
	\end{equation} 
	and for a fixed function $f\in {LC}$, 
	\begin{equation} \label{faip}
	\mathfrak{P}(f)=\{p(f):   p \in \mathfrak{P}\}.
	\end{equation}
	\vskip 2mm
\noindent
\begin{remark} \label{remark1}
\par
\rm(i)  The notion of affine invariant point for log-concave  functions  is an extension of the concept of affine invariant points  for convex bodies given in \cite{Gruenbaum1963, MSW2015a}. Indeed, let
\begin{eqnarray*}
		f(x)=\1_{K}(x) = e^{- I_K(x)},  \ \text{ where } \  
		I_K(x) = 
		 \begin{cases} 
			\infty & x\notin K \\
			0 & x\in K 
		\end{cases}\\ 
\end{eqnarray*}
be  the characteristic function of a convex set $K \subset \mathbb{R}^n$.
By Definition \ref{aip} we get  for every affine map $A:\mathbb R^{n}\rightarrow \mathbb R^{n}$ and every affine contravariant point $p$,
	\begin{equation*}\label{f+K}
	A^{-1}\left(p(\1_K)\right) = p\left(A\cdot \1_K\right)= p\left(\1_{A^{-1}K}\right).
	\end{equation*}
	\par
	\noindent
\rm(ii) Note  that if $A f=f$ for some affine map $A:\mathbb R^{n}\rightarrow \mathbb R^{n}$ and 
	$f\in {LC}$, then for every $p\in \mathfrak{P}$, one  has 
	$$p(f)=p(A f)=A^{-1}(p(f)).$$ It follows that if $f$ is
	even, i.e., $f(x) = f(-x)$ for all $x$, then we get with   $A: x \rightarrow -x$ that  
	$p(f)=-p(f)$  for every $p\in \mathfrak{P}$ and  hence ${\mathfrak P}(f)=\{0\}$.
	\par
	\noindent
	Thus even functions only have one affine contravariant point and therefore, within the class of functions,  play the role that  symmetric convex bodies have in the class of convex bodies, as for those  the center of symmetry is the  only affine invariant point.	
\end{remark}
\vskip 3mm
\noindent
Next, we introduce  the notion of affine {\em covariant}  mappings for functions. 
There,  continuity of a map $P: LC \rightarrow LC$
means that 
$P f_m$ converges to $P f$ in \(L_1\)-norm
whenever 
$f, f_m$, $m \in \mathbb{N}$,  are functions  in $LC$ such that  $f_m\to f$ in $L_1$-norm.
\par
\noindent
Again, affine covariant  mappings can be defined for any function, but  we will  concentrate on 
log-concave functions.
\vskip 3mm
\begin{definition}\label{ais} 
	A  map
	$P: LC \rightarrow LC$ is  called
	an affine covariant  mapping (for functions),  if $P$ is continuous and if
	for every nonsingular affine map $A$ of $\mathbb R^{n}$, one has
	\begin{equation} \label{def2}
	P(Af)=A(P(f))   
	\end{equation}
	We denote by $\mathfrak{A}$ the set of affine covariant function mappings,
	\begin{equation} \label{def:ais}
	\mathfrak{A} =\{ P: LC\rightarrow  LC \big| P \  \text{ is  affine invariant and continuous} \}.
	\end{equation}
\end{definition}
\noindent
\vskip 3mm
\noindent
\begin{remark} \label{remark2}
	\rm (i) It is easy to see  that if $ \lambda \in \mathbb{R}$, $p, q  \in \mathfrak{P}$ and $P \in \mathfrak{A}$, then
	$p\circ P  \in {\mathfrak P}$ and $ (1-\lambda)p+\lambda q\in \mathfrak{P}$.
	Thus,  $\mathfrak{P}$ is an affine space and for every $f \in LC$,  ${\mathfrak P}(f)$ is an affine subspace of $\mathbb{R}^n$.
	Moreover, for  $P, Q  \in \mathfrak{A}$,  the maps
	$$f \rightarrow (P \circ Q)(f),  \hskip 3mm (1-\lambda)P(f) + \lambda Q(f)
	\hskip 3mm \text{and}    \hskip 3mm \sup[P ,Q] (f)=\sup[P(f), Q(f)]$$
	are  affine covariant  mappings for functions.
In that way we can obtain many more  examples of  affine contravariant points and affine covariant mappings.	
	\par
	\noindent
	(ii)
	Properties  (\ref{def1}) and  (\ref{def2}) imply in particular that  for every translation $S_{x_0}$ by a fixed vector $x_0$, $S_{x_0}(x)=x+x_0$,  and  for every
	$f \in LC$,
	\begin{equation} \label{trans}
	p(S_{x_0}f ) =S_{x_0}^{-1} p(f)= p(f) - x_0, \    \text{for every } \  p \in {\mathfrak P},
	\end{equation}
	provided $x+x_0 \in \text{supp}(f)$ and
	\begin{equation} \label{trans1}
	P( f (x+x_0))=P(T_{x_0} f (x)) = T_{x_0} (P(f))(x) = (P(f))(x+x_0),  \   \text{for every } \  P \in {\mathfrak A}, 
	\end{equation}
	provided $x+x_0 \in \text{supp}(f) \cap \text{supp}(P(f))$. 
\end{remark}
\vskip 3mm
\noindent

\subsection {Centroid and Santal\'o point}

\begin{lemma}\label{UnifEst1}
Let $f \in LC$  and let $(f_{m})_{m=1}^{\infty}$
be a sequence in $LC$  that converges in $L_{1}$
to $f$.
Then there are $t\in\mathbb R$ and $\rho >0$ such that
for all $m\in\mathbb N$ and all $x\in\mathbb R^{n}$
\begin{equation}\label{UnifEst1-1}
f_{m}(x)\leq\exp\left(-\frac{\|x\|}{\rho}+t\right).
\end{equation}
\end{lemma}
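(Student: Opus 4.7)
My plan is to split the required exponential estimate into two regimes and use the $L_{1}$ convergence to ensure uniform constants across the whole sequence. A uniform upper bound on the sup-norms $\|f_m\|_\infty$ will control $f_m(x)$ on a bounded set, while a uniform linear lower bound on the convex functions $\psi_m:=-\log f_m$ outside a common ball will control $f_m(x)$ for large $\|x\|$; a separate argument will handle the finitely many initial indices.

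The first step is to prove $M:=\sup_m\|f_m\|_\infty<\infty$. By Lemma~\ref{L1convtoptwconv}(i), $L_{1}$-convergence implies pointwise convergence on $\R^{n}\setminus\partial\overline{\supp f}$. Since $f\in LC$ satisfies a linear-growth lower bound on $\psi=-\log f$ at infinity (a standard consequence of integrability of $e^{-\psi}$), the Legendre transform $\mathcal{L}_0\psi$ is finite on a neighbourhood of $0$, equivalently $0\in\operatorname{int}(\supp f^{\circ})$. Lemma~\ref{L1convtoptwconv}(iii) applied with $x_m=x=0$ then yields $f_m^{\circ}(0)\to f^{\circ}(0)$; using the identity $f_m^{\circ}(0)=1/\|f_m\|_\infty$ this gives $\|f_m\|_\infty\to\|f\|_\infty$, and in particular the uniform bound $M$. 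Pointwise convergence at $0$ also guarantees $\psi_m(0)\le C_0$ uniformly for some constant $C_0$ and every sufficiently large $m$.

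Next I would choose $s\in(0,\|f\|_\infty)$ small enough that $-\log s>\max(C_0,0)$. By Lemma~\ref{SuperLevel1}, $G_f(s)$ is a compact convex body, so $G_f(s)\subset B_2^n(0,R_0)$ for some $R_0$. The Hausdorff convergence $G_{f_m}(s)\to G_f(s)$ from Lemma~\ref{ptwcontoflevelset} supplies a single radius $R\ge R_0$ with $G_{f_m}(s)\subset B_2^n(0,R)$ for every $m$ above some index. For any $x$ with $\|x\|\ge 2R$, set $\lambda:=2R/\|x\|\in(0,1]$ and $z:=\lambda x$; then $\|z\|=2R>R$ puts $z$ outside $G_{f_m}(s)$, so $\psi_m(z)>-\log s$. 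Writing $z=\lambda x+(1-\lambda)\cdot 0$ and applying convexity of $\psi_m$,
\begin{equation*}
-\log s<\psi_m(z)\le \lambda\,\psi_m(x)+(1-\lambda)\psi_m(0)\le \lambda\,\psi_m(x)+\max(C_0,0),
\end{equation*}
which rearranges to the uniform linear lower bound $\psi_m(x)\ge\alpha\|x\|$ with $\alpha:=\bigl(-\log s-\max(C_0,0)\bigr)/(2R)>0$.

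Finally, $f_m(x)\le e^{-\alpha\|x\|}$ for $\|x\|\ge 2R$ and $f_m(x)\le M$ for $\|x\|<2R$; choosing $\rho:=1/\alpha$ and any $t\ge\max\{0,\log M+2R\alpha\}$ merges the two regimes into $f_m(x)\le \exp(-\|x\|/\rho+t)$ for every sufficiently large $m$ and every $x$. For the finitely many initial $f_m$, each is individually an integrable log-concave function, hence satisfies an exponential bound of the same shape with its own constants; enlarging $\rho$ and $t$ to dominate this finite list yields the desired uniform estimate. The main obstacle is the uniform upper bound $\sup_m\|f_m\|_\infty<\infty$, which does not follow directly from $L_{1}$ convergence; the polar-dual identity $f_m^{\circ}(0)=1/\|f_m\|_\infty$ combined with the pointwise convergence of polars in Lemma~\ref{L1convtoptwconv}(iii) neatly circumvents a more delicate direct volume comparison.
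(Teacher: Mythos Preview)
Your proof is correct and follows essentially the same strategy as the paper's: both arguments use Lemma~\ref{ptwcontoflevelset} to trap the superlevel sets $G_{f_m}(s)$ uniformly inside a fixed ball, then a convexity argument (interpolating between $0$ and $x$) to obtain a uniform linear lower bound on $\psi_m$ outside that ball, and finally the convergence $\|f_m\|_\infty\to\|f\|_\infty$ to control the inside. The only cosmetic differences are that the paper fixes the explicit level $s=e^{-\psi(0)-2}$ and runs the convexity step by contradiction, whereas you choose $s$ generically and argue directly; your explicit treatment of the finitely many initial indices is something the paper leaves implicit.
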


\begin{proof}
By Lemma \ref{ptwcontoflevelset} the sequence of sets 
$$
\{x|\psi_{m}(x)\leq \psi(0)+2\}
$$
converges for $m\to\infty$ in the Hausdorff metric to 
$$
\{x|\psi(x)\leq \psi(0)+2\}.
$$
As $0$ is in the interior of the domain of $\psi$,  there is $\rho>0$ and $m_{0}$ such that for all $m\geq m_{0}$
\begin{equation}\label{UnifEst1-2}
\{x|\psi_{m}(x)\leq \psi(0)+2\}\subseteq B_{2}^{n}\left(\tfrac{\rho}{2}\right)
\end{equation}
and, using Lemma \ref{L1convtoptwconv}, 
\begin{equation}\label{UnifEst1-3}
|\psi_{m}(0)-\psi(0)|<\frac{1}{4}.
\end{equation}
We show that for all $x$ with $\|x\| > \rho$ and all $m \geq m_0$, 
\begin{equation}\label{UnifEst1-4}
\psi_{m}(x)\geq \frac{\|x\|}{\rho}+\psi(0),
\end{equation}
which then means that we have established (\ref{UnifEst1-1}) for all $\|x\| > \rho$.
\newline
Suppose that $\psi_{m}(x)< \frac{\|x\|}{\rho}+\psi(0)$ for some $x$ with $\|x\| >\rho$. 
Then by convexity
\begin{equation}\label{UnifEst1-5}
\psi_{m}\left(\frac{\rho}{\|x\|}x\right)
\leq\frac{\rho}{\|x\|}\psi_{m}(x)+\left(1-\frac{\rho}{\|x\|}\right)\psi_{m}(0).
\end{equation}
Since $\|\frac{\rho}{\|x\|}x\| = \rho$, 
it  follows by (\ref{UnifEst1-2}) that
$$
\frac{\rho}{\|x\|}x\notin\{x|\psi_{m}(x)\leq \psi(0)+2\}.
$$ 
Therefore
\begin{equation}\label{UnifEst1-6}
\psi_{}(0)+2<\psi_{m}\left(\frac{\rho}{\|x\|}x\right).
\end{equation}
Hence, by (\ref{UnifEst1-5}) and (\ref{UnifEst1-6})
$$
\psi_{}(0)+2
<\psi_{m}\left(\frac{\rho}{\|x\|}x\right)
\leq\frac{\rho}{\|x\|}\psi_{m}(x)+\left(1-\frac{\rho}{\|x\|}\right)\psi_{m}(0)
$$
By the assumption $\psi_{m}(x)< \frac{\|x\|}{\rho}+\psi(0)$
$$
\psi_{}(0)+2
\leq1+\frac{\rho}{\|x\|}\psi(0)+\left(1-\frac{\rho}{\|x\|}\right)\psi_{m}(0)
$$
and by (\ref{UnifEst1-3})
$$
\psi_{}(0)+2
\leq1+\frac{\rho}{\|x\|}\psi(0)+\left(1-\frac{\rho}{\|x\|}\right)\left(\psi_{}(0)+\frac{1}{4}\right)
=\frac{5}{4}+\psi(0).
$$
This is a contradiction.
Thus (\ref{UnifEst1-4}) holds. 
\par
Now we have to consider what happens for $x$
with $\|x\|\leq\rho$. Since the sequence $(f_{m})_{m=1}^{\infty}$ converges to
$f$ in $L_{1}$, by Lemmas \ref{L1convtoptwconv},  \ref{propoflegendretransf} and \ref{convergenceinfnorm},  the sequence $(\|f_{m}\|_{\infty})_{m=1}^{\infty}$ converges to $\|f\|_{\infty}$.
Therefore, there is $m_{0}\in\mathbb N$ such that for all $m\geq m_{0}$, 
$$
\max_{\|x\|\leq\rho} \left| f_{m}(x) \right| \leq1+ \max_{\|x\|\leq\rho} \left| f_{}(x)\right|.
$$
It follows for all $x$ with $\|x\|\leq\rho$
\begin{eqnarray*}
f_m(x) \leq \max_{\|x\|\leq\rho} \left| f_{m}(x)\right| 
&\leq&\left(1+ \max_{\|x\|\leq\rho} |f_{}(x)|\right)
\exp\left(-\frac{\|x\|}{\rho}+1\right)
\\
&=&
\exp\left(-\frac{\|x\|}{\rho}+1+\ln\left(1+ \max_{\|x\| \leq\rho} |f_{}(x)| \right)\right)
\\
&\leq&\exp\left(-\frac{\|x\|}{\rho}+1+\max_{\|x\|\leq\rho}|f_{}(x)|\right).
\end{eqnarray*}
Thus we have established (\ref{UnifEst1-1}).
\end{proof}
\vskip 3mm
\noindent
We now present  some classical examples of affine contravariant points for functions. 
\vskip 2mm
\noindent
We recall the definition of the centroid $g(f)$ of a function $f$. Provided it exists, it is defined as
\begin{equation}\label{centroid}
g(f)=\frac {\int x f(x) dx} {\int  f(x) dx}.
\end{equation}
For log concave functions the centroid is well defined.	
 We also recall the definition of the Santal\'o point   \( s(f)\) of a function $f \in LC$ \cite{AKM2004}, \cite{FradeliziMeyer2007}.
It is 	
 the unique point 
	for which
\begin{equation}\label{Santalopoint}
	\min_z\int f^z(y) dy
\end{equation}
	is attained.
Note that Santal\'{o} point must be attained in the interior of \( \supp (f)\) because otherwise the integral will be \(\infty\).
\vskip 2mm
\noindent
We shall show that the centroid and the Santal\'o point are affine contra-variant points for log-concave functions. 
\vskip 3mm

\begin{proposition} \label{example2}
Let  \( f\in LC\). 
\newline	
(i) The  centroid \(g(f)\)is an affine contravariant point.
\newline
(ii) The  Santal\'o point   \( s(f)\)  is an affine contravariant point.
\end{proposition}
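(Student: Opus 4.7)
My plan is to verify, for each of the two candidates, the algebraic covariance relation (\ref{def1}) and the $L_1$-continuity of the map separately.

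For the centroid $g(f)$ the covariance is a plain change of variables. Writing $A(x)=Tx+a$ and substituting $u=Ax$ in both integrals defining $g(Af)$, the Jacobian $|\det T|^{-1}$ cancels and one reads off
\[
g(Af)=\frac{\int T^{-1}(u-a)\,f(u)\,du}{\int f(u)\,du}=T^{-1}\bigl(g(f)-a\bigr)=A^{-1}g(f).
\]
Continuity reduces to dominated convergence: given $f_m\to f$ in $L_1$, Lemma \ref{L1convtoptwconv}(i) gives $f_m\to f$ pointwise off $\partial\overline{\supp(f)}$, so $xf_m(x)\to xf(x)$ almost everywhere, while Lemma \ref{UnifEst1} provides $\rho>0$ and $t\in\mathbb R$ with $f_m(x)\le e^{-\|x\|/\rho+t}$ uniformly in $m$. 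Since $\|x\|\,e^{-\|x\|/\rho+t}$ is integrable, the numerators converge, the denominators converge by the $L_1$-hypothesis to $\int f>0$, and hence $g(f_m)\to g(f)$.

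For the Santaló point $s(f)$ the covariance again comes from a change of variables, now in the polar representation. Writing $A(x)=Tx+a$, setting $Az=Tz+a$ and substituting $u=Ax$ in the infimum defining $(Af)^z(y)$, one obtains the pointwise identity
\[
(Af)^z(y)=f^{Az}\bigl(Az+T^{-\top}(y-z)\bigr),
\]
which can also be read off from parts (ii)--(iii) of Lemma \ref{propoflegendretransf}. The affine substitution $v=Az+T^{-\top}(y-z)$ then yields
\[
\int (Af)^z(y)\,dy=|\det T|\int f^{Az}(v)\,dv.
\]
Since $|\det T|$ is independent of $z$, the left side is minimized exactly when $Az=s(f)$, so $s(Af)=A^{-1}s(f)$.

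The main obstacle is continuity of $s$. Given $f_m\to f$ in $L_1$, set $F_m(z):=\int f_m^z(y)\,dy$ and $F(z):=\int f^z(y)\,dy$. By Lemma \ref{L1convtoptwconv}(iii), $f_m^{z_m}\to f^z$ pointwise off $\partial\overline{\supp(f^z)}$ whenever $z_m\to z\in\operatorname{int}\supp(f)$; combined with a uniform exponential majorant for the family $\{f_m^z\}$ with $z$ ranging over a small compact neighbourhood of $s(f)$ inside $\operatorname{int}\supp(f)$ (obtained by applying Lemma \ref{UnifEst1} on the polar side), dominated convergence shows that $F_m\to F$ uniformly on compact subsets of $\operatorname{int}\supp(f)$. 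Since $F$ is strictly convex, has unique minimizer $s(f)\in\operatorname{int}\supp(f)$, and tends to $+\infty$ as $z$ leaves every compact subset of $\operatorname{int}\supp(f)$, a standard comparison argument forces the minimizers $s(f_m)$ eventually into a fixed compact subset of $\operatorname{int}\supp(f)$; every subsequential limit of $(s(f_m))$ minimizes $F$, hence equals $s(f)$, and so the whole sequence converges. The delicate step is the uniform coercivity of the $F_m$'s, which rules out that $s(f_m)$ escapes toward the boundary of the support or to infinity, and is where the uniform decay estimates are genuinely needed.
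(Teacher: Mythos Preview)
Your treatment of part (i) and of the covariance identity in part (ii) is correct and matches the paper's; your change-of-variables derivation of $\int (Af)^z\,dy=|\det T|\int f^{Az}\,dv$ is in fact cleaner than the paper's longer computation.

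For the continuity of $s$, your strategy and the paper's are close in spirit: both pivot on the comparison $F_m(s(f_m))\le F_m(s(f))\to F(s(f))$ together with uniqueness of the minimizer of $F$. The paper, however, carries out explicitly the two pieces you only assert. First, instead of claiming uniform convergence of $F_m$ on compacta via a dominated-convergence majorant, the paper works pointwise: it uses Lemma~\ref{L1convtoptwconv}(iii) to get $F_m(z_m)\to F(z_0)$ whenever $z_m\to z_0\in\operatorname{int}\supp(f)$, which is all that is needed. Your proposed majorant ``Lemma~\ref{UnifEst1} on the polar side'' does not come for free: Lemma~\ref{UnifEst1} bounds the single sequence $(f_m^\circ)$, and to turn this into a bound on $f_m^z(y)=f_m^\circ(y-z)\,e^{\langle z,y-z\rangle}$ uniform over $z$ in a compact set $K$ one needs $\sup_{z\in K}\|z\|<1/\rho$, which forces you first to translate so that $s(f)=0$ and then to choose $K$ small---details you should spell out. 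Second, and more importantly, the ``uniform coercivity'' that keeps $s(f_m)$ inside a fixed compact is exactly what the paper proves in its Case~1: writing $F_m(z)=\int f_m^\circ(w)e^{\langle z,w\rangle}\,dw$ and restricting to a half-ball where $f_m^\circ$ is uniformly bounded below, one obtains $F_m(z)\ge c\,e^{\rho\|z\|/2}$, which blows up if $\|s(f_m)\|\to\infty$. Your sketch flags this step as delicate but does not supply the estimate.

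There is one genuine omission in your comparison argument: you invoke strict convexity of $F$ but not convexity of each $F_m$, and without the latter a local minimum on a small $K$ need not be the global Santal\'o point of $f_m$. The fix is immediate once you note that $F_m(z)=\int f_m^\circ(w)e^{\langle z,w\rangle}\,dw$ is a Laplace transform and hence log-convex in $z$; with this in hand, $F_m(s(f))<\min_{\partial K}F_m$ for large $m$ forces the global minimizer into $\operatorname{int}K$, and your subsequential-limit argument concludes. So your route is valid and, once the convexity of $F_m$ is made explicit, arguably tidier than the paper's two-case contradiction; but as written the two hard estimates are asserted rather than proved.
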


\begin{proof}
(i) As noted above,  for $f \in LC$, $g(f)$ exists. 
Moreover, it is easy to see that   $g(Af)=A^{-1} \left(g(f)\right)$ for all affine transformations $A$.
\newline
Let now 	$f$ be a log concave function and let $(f_{m})_{m=1}^{\infty}$  be a sequence of log concave functions
that converges to $f$ in the $L_{1}$-norm.
 Thus, for $\varepsilon >0$ given,
$ \|f\|_1 - \varepsilon \leq \|f_m\|_1 \leq \|f\|_1 + \varepsilon$, for $m$ large enough.  Then
\begin{eqnarray*} 
\|g(f) - g(f_m)\| \leq \frac{ \| \int x f dx \| \   
\int | f - f_m | \  dx +  \|f\|_1 \|\int x (f(x) -f_m(x)) dx\|}{\|f\|_1(\|f\|_1-\varepsilon)} 
\end{eqnarray*} 	
By Lemma \ref{UnifEst1}, there is $t$, $m_0$ and $\rho >0$  such that for all $m \geq m_0$ and for all  $x$, we have
$$
\|x (f(x) -f_m(x))\|
\leq2\|x\|\exp\left(-\frac{\|x\|}{\rho}+t\right).
$$
The function on the right side is integrable. Therefore we can apply the Dominated Convergence
Theorem to the sequence on the left side. For almost all $x$ we have 
$$
\lim_{m\to\infty}\|x (f(x) -f_m(x))\|=0.
$$
\par
(ii) 	First we shall show that for any non degenerate affine transform \(A=T+a\) where \( T\in GL(n), a\in \R^n\), we have that \( s(Af)=A^{-1}s(f).\) \newline	
	Let \(z_0=s(Af), z_1=s(f)\). We put  \( u=Ax=Tx+a\), i.e., \( x=T^{-1}(u-a)\), and obtain
	\begin{eqnarray*}
\int (Af)^{z_0}(y)dy &=& \int \inf_{x \in \supp(Af)}\frac{e^{-\langle x-z_0,y-z_0\rangle}}{f(Ax)}dy 
= \int \inf_{u \in \supp(f)}\frac{e^{-\langle T^{-1}(u-a)-z_0,y-z_0\rangle}}{f(u)}dy\\
		&=& \int \inf_{u \in \supp(f)}\frac{e^{-\langle T^{-1}u-z_0,y-z_0\rangle}}{f(u)}e^{\langle T^{-1}a, y-z_0\rangle}dy\\
		&=& \int \inf_{u \in \supp(f)}\frac{e^{-\langle T^{-1}(u-Tz_0),y-z_0\rangle}}{f(u)}e^{\langle T^{-1}a, y-z_0\rangle}dy\\
		&=& \int \inf_{u \in \supp(f)}\frac{e^{-\langle u-Tz_0,(T^{-1})^t(y-z_0)\rangle}}{f(u)}e^{\langle T^{-1}a, y-z_0\rangle}dy\\
	\end{eqnarray*}
	Now we introduce \( w\in \R^n\) so that 
	\[(T^{-1})^t(y-z_0)=w-Tz_0-a.\]
	So \( (T^{-1})^ty=w-Tz_0-a+(T^{-1})^tz_0\). Hence \( y=T^tw-T^tTz_0-T^ta+z_0\) and \( dy=|\det T^t|dw= |\det T|dw\). With that  change of variable, we continue the calculation above as follows,	
	\begin{eqnarray*}
		&& \int \inf_{u \in \supp(f)}\frac{e^{-\langle u-Tz_0,w-Tz_0-a\rangle}}{f(u)}e^{\langle T^{-1}a, T^tw-T^tTz_0-T^ta\rangle} \ |\det T| dw\\
		&&= |\det T|\ \int \inf_{u \in \supp(f)}\frac{e^{-\langle u-Tz_0,w-Tz_0-a\rangle}}{f(u)}e^{\langle a, w-Tz_0-a\rangle}dw\\
		&&=  |\det T|\ \int \inf_{u \in \supp(f)}\frac{e^{-\langle u-Tz_0-a,w-Tz_0-a\rangle}}{f(u)}dw \\
		&&=  |\det T| \ \int \inf_{u \in \supp(f)}\frac{e^{-\langle u-Az_0,w-Az_0\rangle}}{f(u)}dw\\
		&&= |\det T| \ \int f^{Az_0}(w)dw
	\ge  |\det T| \  \int f^{z_1}(w)dw.
	\end{eqnarray*}	
Altogether we have
\begin{equation}\label{example2-1}
\int (Af)^{z_0}(y)dy
\ge  |\det T| \  \int f^{z_1}(w)dw.
\end{equation}
	Next we look at \( \int f^{z_1}(w)dw\) more closely. By definition, 
	\[\int f^{z_1}(w)dw=\int \inf_{x \in \supp(f)} \frac{e^{-\langle x-z_1,w-z_1\rangle }}{f(x)}dw.\]
	We put $x=A \xi=T\xi+a $. 
	Then the above integral equals
	\begin{eqnarray*}
		\int \inf_{\xi \in \supp (Af)} \frac{e^{-\langle T(\xi+T^{-1}(a-z_1)),w-z_1\rangle }}{Af(\xi)}dw
		=\int \inf_{\xi \in \supp (Af)} \frac{e^{-\langle \xi+T^{-1}(a-z_1),T^t (w-z_1)\rangle }}{Af(\xi)}dw.
	\end{eqnarray*}
Now let \( z_2=T^{-1}(z_1-a)\), that is, \( z_1=Tz_2+a\). Furthermore, we let \[ v=T^t(w-z_1)+z_2=T^t(w-Tz_2-a)+z_2.\]
Therefore \( dv=|\det T| dw\) and the latter integral equals 
	\begin{eqnarray*}
		 \frac{1}{|\det T |}\int \inf_{\xi \in \supp (Af)} \frac{e^{-\langle \xi-z_2,v-z_2)\rangle }}{Af(\xi)}dv
		= \frac{1}{|\det T|} \int (Af)^{z_2}(v)dv.
	\end{eqnarray*}
Consequently, with (\ref{example2-1}) 
	\be 
	\int (Af)^{z_0}(y)dy \ge |\det T|\int f^{z_1}(w)dw=\int (Af)^{z_2}(v)dv.
	\ee
	On the other hand, it's trivially true that \( \int (Af)^{z_0}\le \int (Af)^{z_2}\) by the definition of the Santal\'o point. Therefore, 	
$$
\int (Af)^{z_0}(y)dy= \int (Af)^{z_2}(v)dv
$$
and it follows from the uniqueness of the Santal\'o point that \( z_0=z_2\). Consequently,
$$
s(Af)=z_{0}=z_{2}=T^{-1}(z_{1}-a)=T^{-1}(s(f))-T^{-1}a=A^{-1}(s(f)).
$$
\noindent	
Now we shall prove the continuity of the Santal\'o point. Let $(f_m) _{m \in \mathbb{N}}$ be a sequence 
of log-concave functions that converges to $f$ in \( L_1\). 
We assume that the sequence 	$(s(f_m))_{m \in \mathbb{N}}$ does not converge to $s(f)$. Then there are two cases.
The first case is that 
\begin{equation} \label{Fall1}
\lim_{m\to\infty} \|s(f_m)\| = \infty.
\end{equation}
By the definition of the Santal\'o point, we have  for all \(m\in\mathbb N\), 
$$
 \int f_m^{s(f_m)}(x)dx\le \int f_m^{s(f)}(x)dx
$$
and thus by Lemma \ref{L1convtoptwconv}
\be\label{ben}
 \lim_{m\to\infty}\int f_m^{s(f_m)}\le\lim_{m\to\infty} \int f_m^{s(f)}= \int f^{s(f)}<\infty .   
  \ee
It follows from  the definition of the Legendre transform  (\ref{defLT}) that,
$$
\mathcal{L}_z \psi(y)= \sup_{x\in \R^n} [ \langle x-z,y-z\rangle -\psi(x)] = - \langle z, y-z \rangle + \mathcal{L}_0 \psi(y-z), 
$$
and
\begin{eqnarray*}
f_{m}^{s(f_{m})}(y)
&=&\inf_{x\in\operatorname{supp}(f_{m})}\frac{e^{-\langle x-s(f_{m}),y-s(f_{m})\rangle}}{f_{m}(x)}
=e^{\langle s(f_{m}),y-s(f_{m})\rangle}
\inf_{x\in\operatorname{supp}(f_{m})}\frac{e^{-\langle x,y-s(f_{m})\rangle}}{f_{m}(x)}
\\
&=&f_{m}^{\circ}(y-s(f_{m}))e^{\langle s(f_{m}),y-s(f_{m})\rangle}.
\end{eqnarray*}
Therefore, 
\[ \int f_m^{s(f_m)}(y)  dy  = 
\int f_m^\circ(w) \, e^{\langle s(f_m), w\rangle } dw.   \]
We can assume without loss of generality that $0  \in \text{int} (\supp (f))$.
We choose \(\rho>0\) such that the closed ball \( B_2^n(\rho) \subset \intt(\supp(f))\). Since the integrands in the above integrals are  positive, 
\begin{equation*} 
\int f_m^\circ(w) \,  e^{\langle s(f_m), w\rangle } dw
\ge \int f_m^\circ(w)\,  e^{\langle s(f_m), w\rangle }\,  \1_{\left\{w\in  B_2^n(\rho) : \langle \frac{s(f_m)}{\|s(f_m)\|}, \frac{w}{\rho}\rangle >\frac{1}{2} \right\} } (w)dw.
\end{equation*}
By Lemma \ref{L1convtoptwconv}, the sequence $( f_m^\circ)_{m=1}^{\infty}$ converges uniformly 
to $f^{\circ}$ on the closed ball \(   B_2^n (\rho)\). Hence 
there exists \(m_0\in\mathbb N\) such that for all \( m \geq m_0\) and all \( w\in  B_2^n(\rho) \)
\[         f_m^\circ(w) \ge \frac{1}{2} \min \left\{ f^\circ(v): v\in  B_2^n(\rho) \right\}  .   \]
Moreover, for $n\geq2$, for all $\theta$ with $\|\theta\|=1$, 
\begin{eqnarray*}
&&\operatorname{vol}_{n}\left(\left\{w\in B_{2}^{n}(\rho) :
\left\langle \theta,\frac{w}{\rho}\right\rangle\geq\frac{1}{2}\right\}\right)
\geq\operatorname{vol}_{n}\left(\left\{w\in B_{2}^{n}(\rho):
\left\langle \theta,\frac{w}{\|w\|}\right\rangle\geq\frac{1}{\sqrt{2}}
\hskip 1mm\mbox{and}\hskip 1mm
\|w\|\geq\frac{\rho}{\sqrt{2}}\right\}\right)
\\
&&\geq\frac{1}{n}\operatorname{vol}_{n-1}(B_{2}^{n-1})\left(\frac{\rho}{\sqrt{2}}\right)^{n}
\left(1-\left(\frac{1}{\sqrt{2}}\right)^{n}\right)
\geq\frac{1}{2n}\operatorname{vol}_{n-1}(B_{2}^{n-1})\left(\frac{\rho}{\sqrt{2}}\right)^{n}.
\end{eqnarray*}
Therefore
\begin{eqnarray*}
&& \int f_m^\circ(w) \, e^{\langle s(f_m),w \rangle } \1_{\left\{w\in  B_2^n (\rho): \langle \frac{s(f_m)}{\|s(f_m)\|}, \frac{w}{\rho}\rangle >\frac{1}{2} \right\} } (w) dw\\
&& \ge   e^{\frac{\rho}{2}\|s(f_m)\|} \,   \left(\frac{\rho}{\sqrt{2}}\right)^{n} \min\{ f^\circ(v): v\in  B_2^n (\rho)\} \,   
\frac{1}{2n}\operatorname{vol}_{n-1}(B_{2}^{n-1}).
\end{eqnarray*}
If $m$ tends to infinity the right hand side goes to infinity by  assumption (\ref{Fall1}). This in turn implies that 
\[\lim_{m\to\infty}\int f_m^{s(f_m)}(w)dw=\infty , \]
contradicting (\ref{ben}).
\par
The second case is that there is a converging subsequence $(s(f_{m_j}))_{j \in \mathbb{N}}$ such that 
\begin{equation} \label{Fall2}
\lim_{j\to\infty} s(f_{m_j})= s_0 \neq s(f).
\end{equation} 
First we observe that \( s_0\in \intt  (\supp (f)\)). Otherwise, as  by Lemma \ref{L1convtoptwconv},   
$\int f^{s(f) }= \lim_{m_j} \int f_{m_j} ^{s(f)}$,  we have, again using Lemma  \ref{L1convtoptwconv}, 
    \[ \infty>\int f^{s(f) }(x)dx=  \lim_{j\to\infty} \int f_{m_j} ^{s(f)}(x)dx 
    \geq \lim_{j\to\infty} \int f_{m_j} ^{s(f_{m_j})}(x)dx= 	\int f^{s_0 }(x)dx =\infty,	 \]
    which leads to a contradiction.
 We show next that 
\begin{equation} \label{konvergenz1}	
 \lim_{j\to\infty} \int f^{s(f_{m_j})}(x)dx = \int f^{s(f)}(x)dx.
 \end{equation}
Then by Lemma \ref{L1convtoptwconv}
$$
 \int f^{s_0}(x)dx = \lim_{j\to\infty} \int f^{s(f_{m_j})}(x)dx=  \int f^{s(f)}(x)dx,
 $$
which contradicts the uniqueness of the Santal\'o point. Thus it is enough to show (\ref{konvergenz1}). By Lemma  \ref{L1convtoptwconv},
$$
 \lim_{j\to\infty} \int f_{m_j}^{s(f)}(x)dx = \int f^{s(f)}(x)dx.
$$	
By the definition  of the  Santal\'o point we have for all $j\in\mathbb N$  	
$$
\int f_{m_j}^{s(f)}(x)dx\ge \int f_{m_j}^{s(f_{m_j}) }(x)dx
$$
and therefore by Lemma  \ref{L1convtoptwconv} 
$$
\lim_{j\to\infty} \int f_{m_j}^{s(f)}(x)dx\ge \lim_{j\to\infty}\int f_{m_j}^{s(f_{m_j})}(x)dx.
$$
Again by Lemma \ref{L1convtoptwconv} 
$$
\int f^{s(f)}(x)dx= \lim_{j\to\infty} \int f_{m_j}^{s(f)}(x)dx 
\geq \lim_{j\to\infty} \int f_{m_j}^{s(f_{m_j}) }(x)dx =  \int f^{s_0}(x)dx \geq \int f^{s(f)}(x)dx.
$$
This shows that $\int f^{s(f)}(x)dx=  \int f^{s_0}(x)dx$. Thus by uniqueness of the Santal\'o point, we get that $s_0=s(f)$, contradicting  (\ref{Fall2}).
\end{proof}

\vskip 2mm
\noindent
In the next sections we study the L\"{o}wner function \cite{LiSchuettWerner2019}, the John function \cite{Alonso-Gutierrez2017}  
and the floating function \cite{LSW2017} of a log-concave function.
The importance of the L\"{o}wner-  and John ellipsoids in the context of convex bodies was already outlined in the introduction.
Convex floating bodies were introduced independently by B\'ar\'any and Larman \cite{BaranyLarman1988} 
and Sch\"utt and Werner \cite{SW1}.
They  provide a way to extend the important notion of affine surface area (see e.g., \cite{Blaschke, Lutwak1996}) to all convex bodies \cite{SW1}.
By now floating bodies  are widely used, e.g., in  differential geometry \cite{BesauWerner2015, BesauWerner2016}, approximation theory \cite{BaranyLarman1988, BLW:2016, Schuett1991}, 
data science \cite{Brunel, NagySchuettWerner, AndersonRademacher} and even economics \cite{BardakciLagoa}.
L\"{o}wner-  and John functions and floating functions serve a similar  purpose within the functional setting \cite{Alonso-Gutierrez2017, LiSchuettWerner2019, LSW2017}.

\vskip 2mm
\noindent

\section{ The floating function of a log-concave function}

We start by giving the definition of the  floating function for a log-concave function, which was introduced in \cite{LSW2017}.  
First we recall the definition of floating set, which was also introduced in \cite{LSW2017}.  $H$ is a hyperplane and
$H^+$ and $H^-$ are the two half-spaces determined by this hyperplane.

\begin{definition} \label{floating set}\cite{LSW2017}
	Let $C$ be a closed  convex subset of $\mathbb{R}^n$ with non-empty interior. 
	For $\delta \geq 0$ and a finite measure $m$ on \(C\),  the \em{ floating set} $C_{\delta} $ is defined by
	\begin{align*}
	C_{\delta}= \bigcap \left\{ H^+ : \vol_{n} \left(H^-\cap C\right) \leq \delta  \, m(C)\right\}.
	\end{align*}
\end{definition}
\par
\noindent
The  floating set is  used to define the floating function of a convex function  and  a log-concave function. 
\par
\noindent
\begin{definition} \cite{LSW2017}
	Let $\psi: \R^n \rightarrow \R \cup \{ \infty \}$
	be a  convex function and $f(x)= \exp(-\psi (x) )$ be an integrable log-concave function.  Let $\epi(\psi)$ be its epigraph and $\delta \geq 0$.
	\par
	\noindent
	(i) The  floating function  of $\psi$ is defined to be this function $\psi_\delta$ such that
	\begin{equation}\label{flotfunct}
\epi\left(\psi_\delta\right)=	(\epi(\psi))_\delta 
= \bigcap \left\{ H^+ : \vol_{n+1}\left(H^-\cap \epi(\psi)\right) \leq \delta  \int_{\mathbb  R ^n} e^{-\psi(x)} dx \right\}.
	\end{equation} 
\par	
	\noindent
	(ii) The  floating function $f_\delta$ of $f$ is defined as
	\begin{equation}\label{flotlog}
	f_\delta (x) = \exp\left(-\psi_\delta (x) \right).
	\end{equation}
\end{definition}
\par
\noindent
The floating function is again  a log-concave function. Denote bu $\operatorname{dom}$ the domain of $\psi$. For all
$x\notin\partial \operatorname{dom}(\psi)$ we have $\psi(x)\leq\psi_{\delta}(x)$.
If $f=e^{-\psi}$ is integrable, then $f_\delta$ is also integrable as
\begin{equation}\label{integral}
\int _{\mathbb{R} ^n}f_\delta(x)dx 
= \int_{\mathbb{R} ^n} e^{-\psi_\delta(x)}dx\le \int_{\mathbb{R} ^n} e^{-\psi(x)} dx
= \int_{\mathbb{R}^n} f(x) dx<\infty.
\end{equation}

\begin{lemma} \label{lemma-float1}
Let $\psi: \R^n \rightarrow \R \cup \{ \infty \}$
	be a  convex function. 
Then we have for all $x_{0}$ in the interior of the domain of $\psi$, 
\begin{equation}\label{lemma-float1-1}
\psi_\delta(x_{0}) = \sup_{(u, \alpha)  \in \mathbb{R}^n \times \mathbb{R}} \alpha - \langle u, x_{0} \rangle
\end{equation}	
where the supremum is taken over all $(u,\alpha(u))$ such that 
\begin{equation}\label{lemma-float1-2}
\int_{\mathbb{R}^n}   \max\{ 0, \alpha - \langle x, u\rangle  - \psi(x)\}dx = \delta \int_{\mathbb{R} ^n}e^{-\psi}dx,
\end{equation}
where  $\max\{ 0, \alpha - \langle x, u\rangle  - \psi(x)\}=0$ if $\psi(x)=\infty$.
\end{lemma}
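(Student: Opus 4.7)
My plan is to render the floating-set definition concrete by parametrizing the relevant closed half-spaces in $\mathbb{R}^{n+1}$. Every non-vertical hyperplane can be written as $\{(x,y) : y = \alpha - \langle u, x\rangle\}$ for a unique $(u,\alpha)\in\mathbb{R}^{n}\times\mathbb{R}$. Since $\epi(\psi)$ is unbounded above, the half-space $\{y \geq \alpha - \langle u, x\rangle\}$ intersected with $\epi(\psi)$ has infinite $(n+1)$-volume whenever $\dom(\psi)$ has positive $n$-measure. Hence among non-vertical orientations only $H^- = \{y \leq \alpha - \langle u, x\rangle\}$, $H^+ = \{y \geq \alpha - \langle u, x\rangle\}$ can contribute, and Fubini gives
$$\vol_{n+1}\bigl(H^-\cap \epi(\psi)\bigr) = \int_{\mathbb{R}^{n}}\max\{0,\alpha-\langle u,x\rangle-\psi(x)\}\,dx =: F_{u}(\alpha).$$

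Next I would dispose of the vertical hyperplanes at the point $x_{0}$. For a hyperplane $\{\langle v,x\rangle = c\}$ the intersection $H^-\cap \epi(\psi) = \{(x,y):\langle v,x\rangle\le c,\ y\ge\psi(x)\}$ has finite $(n+1)$-volume only when the $n$-dimensional set $\{x\in\dom(\psi):\langle v,x\rangle\le c\}$ is Lebesgue-null. Convexity of $\dom(\psi)$ then forces $\dom(\psi)\subseteq H^{+}$, and since $x_{0}$ lies in the interior of $\dom(\psi)$ it satisfies $\langle v,x_{0}\rangle > c$. The entire vertical line $\{x_{0}\}\times\mathbb{R}$ therefore lies in $H^{+}$, so vertical admissible half-spaces impose no lower bound on $\psi_{\delta}(x_{0})$.

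Combining these two steps,
$$\psi_{\delta}(x_{0}) = \inf\{y_{0} : (x_{0},y_{0})\in H^{+}\text{ for every admissible }(u,\alpha)\} = \sup\{\alpha - \langle u,x_{0}\rangle : F_{u}(\alpha) \le \delta\|f\|_{1}\}.$$
To finish, fix $u$ and note that $F_{u}$ is non-decreasing and, by monotone/dominated convergence, continuous on its finite-valued domain. The value $\psi_{\delta}(x_{0})$ is finite (any $y_{0}$ sufficiently large lies above every non-vertical hyperplane $y=\alpha-\langle u,x\rangle$ with $F_{u}(\alpha)\le\delta\|f\|_{1}$), so for each $u$ the supremum in $\alpha$ is finite. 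By continuity of $F_{u}$ this supremum is either attained at an $\alpha(u)$ with $F_{u}(\alpha(u))=\delta\|f\|_{1}$ or approached along a sequence $\alpha_{k}$ with $F_{u}(\alpha_{k})\to\delta\|f\|_{1}$; in either case restricting to the equality constraint yields the same overall supremum, which is exactly (\ref{lemma-float1-1}).

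The step I expect to take the most care is the vertical-hyperplane reduction, because without the interior-point hypothesis on $x_{0}$ those half-spaces can genuinely cut below the epigraph and spoil the formula; coupled with it is a short case-analysis ensuring that for each $u$ one may replace the constraint $F_{u}(\alpha)\le\delta\|f\|_{1}$ with equality, which rests on nothing more than the monotonicity and continuity of $F_{u}$ on $\{F_{u}<\infty\}$ together with the finiteness of $\psi_{\delta}(x_{0})$.
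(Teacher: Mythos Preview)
Your approach is essentially the paper's: parametrize non-vertical half-spaces by $(u,\alpha)$, compute the cap volume by Fubini as $F_u(\alpha)=\int\max\{0,\alpha-\langle u,x\rangle-\psi(x)\}\,dx$, obtain $\psi_\delta(x_0)=\sup\{\alpha-\langle u,x_0\rangle:F_u(\alpha)\le\delta\|f\|_1\}$, and then replace the inequality constraint by equality via monotonicity and continuity of $F_u$. Your treatment of vertical half-spaces is in fact more careful than the paper's blanket claim that ``otherwise the volume is infinite''; you correctly use that $x_0\in\operatorname{int}(\dom\psi)$ to see such half-spaces impose no constraint.

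One point deserves a closer look. Your per-$u$ claim---that for each fixed $u$ the supremum in $\alpha$ is attained at, or approached by, values with $F_u(\alpha)=\delta\|f\|_1$---fails for directions $u$ along which $F_u$ jumps from $0$ to $+\infty$ (e.g.\ $\psi(x)=|x|$, $u=1$: then $F_1(\alpha)=0$ for $\alpha\le 0$ and $=\infty$ for $\alpha>0$). For such $u$ there is no $\alpha$ satisfying the equality constraint, yet the inequality-constrained supremum over $\alpha$ is finite. The fix is short: whenever $F_u(\alpha)=0$ one has $\alpha-\langle u,x\rangle\le\psi(x)$ for all $x$, so these $u$ contribute at most $\psi(x_0)\le\psi_\delta(x_0)$ and are dominated by the ``good'' directions, for which your continuity argument goes through (one can check via recession cones that if $0<F_u(\alpha_0)<\infty$ for some $\alpha_0$ then $F_u$ is finite and continuous on all of $\mathbb R$ and tends to $+\infty$). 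The paper handles this same issue by a case split on whether $\psi_\delta(x_0)=\psi(x_0)$ and an appeal to Hahn--Banach, which is comparably informal.
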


\begin{proof}
For $(u,u_{n+1})\in\mathbb R^{n+1}$ with $\|(u,u_{n+1})\|=1$ and $\beta\in\mathbb R$
there is a hyperplane $H=\{x|\langle x,u\rangle+u_{n+1}x_{n+1}=\beta\}$. Then
$$
\operatorname{epi}(\psi_{\delta})
=\bigcap \left\{ H^+ : \operatorname{vol}_{n+1}\left(H^-\cap \epi(\psi)\right) \leq \delta  \int_{\mathbb  R ^n}e^{-\psi(x)}dx\right\}
$$
where
$$
H^{-}=\{(x,x_{n+1}):\langle u,x\rangle +u_{n+1}x_{n+1}\leq\beta\}
=\left\{(x,x_{n+1}): x_{n+1}\leq\frac{\beta}{u_{n+1}}-\left\langle \frac{u}{u_{n+1}},x\right\rangle\right\}.
$$
We may assume that $u_{n+1}\ne0$ because otherwise
$ \operatorname{vol}_{n+1}\left(H^-\cap \epi(\psi)\right)=\infty$.
Renaming $\alpha=\frac{\beta}{u_{n+1}}$ and $v=\frac{u}{u_{n+1}}$
$$
H^{-}
=\left\{(x,x_{n+1}): x_{n+1}\leq \alpha-\left\langle v,x\right\rangle\right\}.
$$
We have 
$$
\operatorname{vol}_{n+1}\left(H^-\cap \epi(\psi)\right)
=\int_{\mathbb R^{n}}\max\{ 0, \alpha - \langle x, v\rangle  - \psi(x)\} dx.
$$
It follows that
\begin{eqnarray*}
&&\operatorname{epi}(\psi_{\delta})
=\bigcap_{(\alpha,v)} \bigg\{ (x,x_{n+1}) : x_{n+1}\geq\alpha-\langle x,v\rangle
\hskip 2mm \mbox{and}\hskip 2mm
\\
&&\left.\hskip 40mm
\int_{\mathbb R^{n}}\max\{ 0, \alpha - \langle x, v\rangle  - \psi(x)\} dx 
\leq \delta  \int_{\mathbb  R ^n}e^{-\psi(x)}dx\right\}.
\end{eqnarray*}
Since
$
\operatorname{epi}(\psi_{\delta})=\{(x,x_{n+1}):x_{n+1}\geq\psi_{\delta}(x)\}
$
$$
\psi_{\delta}(x)=\sup_{(v,\alpha)}\alpha-\langle v,x\rangle
$$
where
$$
\int_{\mathbb R^{n}}\alpha-\langle v,x\rangle-\psi(x)dx\leq\delta  \int_{\mathbb  R ^n}e^{-\psi(x)}dx.
$$
We show now that it is enough to consider those $(\alpha,v)$ with equality in the latter inequality.
Let us observe that if there is $\alpha_{0}$ such that
\begin{equation}\label{lemma-float1-3}
0<\int_{\mathbb{R}^n}   \max\{ 0, \alpha_{0} - \langle x, v\rangle  - \psi(x)\}dx \leq\delta \int_{\mathbb{R} ^n}e^{-\psi}
\end{equation}
then there is $\alpha_{1}$ with
\begin{equation}\label{lemma-float1-4}
\int_{\mathbb{R}^n}   \max\{ 0, \alpha_{1} - \langle x, v\rangle  - \psi(x)\} dx= \delta \int_{\mathbb{R} ^n}e^{-\psi}.
\end{equation}
We verify this. The convexity of $\psi$ implies that by (\ref{lemma-float1-3}) the integral
\begin{equation}\label{lemma-float1-5}
\int_{\mathbb{R}^n}   \max\{ 0, \alpha - \langle x, v\rangle  - \psi(x)\} dx
\end{equation}
is finite for all $\alpha\geq\alpha_{0}$. Moreover, again by the convexity of $\psi$ the integral (\ref{lemma-float1-4})
is continuous w.r.t. $\alpha$ for $\alpha\geq\alpha_{0}$.
\par
Consider $x_{0}\in\operatorname{int}(\operatorname{dom(\psi)})$ and suppose that 
$\psi(x_{0})<\psi_{\delta}(x_{0})$. Then there is $(\alpha,v)$ satisfying (\ref{lemma-float1-3})
and we can conclude that there is $(\alpha,v)$ satisfying (\ref{lemma-float1-4}).
\par
If $\psi_{\delta}(x_{0})=\psi(x_{0})$ then by the theorem of Hahn-Banach there is $(\alpha,v)$ such that
$\alpha-\langle v,x_{0}\rangle=\psi(x_{0})$ and for all $x\in\mathbb R^{n}$ we have 
$\alpha-\langle x,v\rangle\leq\psi(x)$.
\end{proof}

\noindent

\begin{lemma} \label{lemma-float2}
For all  $x_{0}$ in the interior of the domain of $\psi$ there are $u_{0}$ and $\alpha(u_0)$ such that 
(\ref{lemma-float1-2}) holds and
\begin{equation} \label{lemma-float2-1}
\psi_\delta(x_{0}) = \alpha(u_0) - \langle u_0, x_{0} \rangle .
\end{equation}
\end{lemma}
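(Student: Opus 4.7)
\textit{Proof proposal.} The plan is to invoke Lemma \ref{lemma-float1} and then to show, by a compactness argument, that the supremum appearing in (\ref{lemma-float1-1}) is attained. First I would take a maximizing sequence $(u_{n},\alpha_{n})_{n\in\mathbb N}$ of pairs satisfying (\ref{lemma-float1-2}) with $\alpha_{n}-\langle u_{n},x_{0}\rangle\to\psi_{\delta}(x_{0})$.

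The central step will be to prove that $(u_{n})$ is bounded. Since $x_{0}\in\intt(\dom(\psi))$, Lemma \ref{ConStetSup} yields an $r>0$ and an $M\in\mathbb R$ such that $B_{2}^{n}(x_{0},r)\subset\intt(\dom(\psi))$ and $\psi\leq M$ on this ball. Supposing $\|u_{n}\|\to\infty$ along a subsequence, I would evaluate the integrand of (\ref{lemma-float1-2}) on the ball $B_{2}^{n}(y_{n},r/4)\subset B_{2}^{n}(x_{0},r)$ around $y_{n}:=x_{0}-(r/2)\,u_{n}/\|u_{n}\|$. For $z$ in this ball one has
$$\alpha_{n}-\langle z,u_{n}\rangle-\psi(z)\;\geq\;\bigl(\alpha_{n}-\langle x_{0},u_{n}\rangle\bigr)+\tfrac{r}{4}\|u_{n}\|-M,$$
which tends to $+\infty$ since $\alpha_{n}-\langle x_{0},u_{n}\rangle$ is bounded. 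Integrating over a ball of fixed positive Lebesgue volume forces the left-hand side of (\ref{lemma-float1-2}) to diverge, contradicting its constant value $\delta\int e^{-\psi}\,dx$. Hence $(u_{n})$ is bounded, and then $\alpha_{n}=(\alpha_{n}-\langle u_{n},x_{0}\rangle)+\langle u_{n},x_{0}\rangle$ is bounded as well.

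After passing to a convergent subsequence, $(u_{n},\alpha_{n})\to(u_{0},\alpha_{0})$ with $\alpha_{0}-\langle u_{0},x_{0}\rangle=\psi_{\delta}(x_{0})$. Applying Fatou's lemma to the nonnegative integrand in (\ref{lemma-float1-2}), which for each fixed $x$ is continuous in $(u,\alpha)$ (with the convention $\max\{0,\cdot-\psi(x)\}=0$ when $\psi(x)=\infty$), yields
$$\int_{\mathbb R^{n}}\max\{0,\alpha_{0}-\langle x,u_{0}\rangle-\psi(x)\}\,dx\;\leq\;\delta\int_{\mathbb R^{n}}e^{-\psi}\,dx.$$
If this inequality were strict, the same monotonicity-and-continuity argument from the proof of Lemma \ref{lemma-float1} that deduces (\ref{lemma-float1-4}) from (\ref{lemma-float1-3}) would produce an $\alpha_{0}'>\alpha_{0}$ still satisfying the constraint, but then $\alpha_{0}'-\langle u_{0},x_{0}\rangle$ would strictly exceed $\psi_{\delta}(x_{0})$, contradicting (\ref{lemma-float1-1}). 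So equality must already hold, which is exactly (\ref{lemma-float1-2}) and (\ref{lemma-float2-1}) for $(u_{0},\alpha(u_{0}))=(u_{0},\alpha_{0})$.

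The hardest part will be the boundedness of $(u_{n})$; this is the one place where the hypothesis $x_{0}\in\intt(\dom(\psi))$ enters essentially, via the local upper bound $M$ on $\psi$. If $x_{0}$ were on the boundary of $\dom(\psi)$, the lower-bound computation would collapse. The remaining ingredients—extraction of a convergent subsequence and lower semicontinuity of the integral via Fatou—are standard.
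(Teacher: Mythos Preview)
Your proposal is correct and follows essentially the same route as the paper: take a maximizing sequence for the supremum in Lemma~\ref{lemma-float1}, use the hypothesis $x_{0}\in\intt(\dom\psi)$ to produce a local upper bound on $\psi$ and derive a lower bound on the constraint integral that forces $(u_{n})$ (and then $(\alpha_{n})$) to be bounded, and conclude by compactness. Your treatment is in fact a bit more explicit than the paper's at the final step---the paper simply writes ``by compactness our lemma follows,'' whereas you spell out, via Fatou and the monotonicity/continuity argument from Lemma~\ref{lemma-float1}, why the limit pair satisfies (\ref{lemma-float1-2}) with equality.
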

	
\begin{proof}
By Lemma \ref{lemma-float1} there are sequences $(u_{k})_{k=1}^{\infty}$ and $(\alpha_{k})_{k=1}^{\infty}$ such that
\begin{equation}\label{lemma-float2-2}
\psi_{\delta}(x_{0})\geq \alpha_{k}-\langle x_{0},u_{k}\rangle\geq \psi_{\delta}(x_{0})-\frac{1}{k}
\end{equation}
and for all $k\in\mathbb N$
$$
\int_{\mathbb{R}^n}   \max\{ 0, \alpha_{k} - \langle x, u_{k}\rangle  - \psi(x)\} 
= \delta \int_{\mathbb{R} ^n}e^{-\psi}dx.
$$
We show that the sequences $(\|u_{k}(x_{0})\|)_{k=1}^{\infty}$ and $(\alpha_{k}(x_{0}))_{k=1}^{\infty}$ are bounded.
Then, by compactness our lemma follows. Since $x_{0}$ is an interior point of the domain of $\psi$ there
is $\rho>0$ such that $B_{2}^{n}(x_{0},\rho)$ is contained in the domain of $\psi$
and $\psi(x)\leq\psi(x_{0})+1$ for $x\in B_{2}^{n}(x_{0},\rho)$. We have
\begin{eqnarray*}
\delta \int_{\mathbb{R} ^n}e^{-\psi}dx
&=&\int_{\mathbb{R}^n}   \max\{ 0, \alpha_{k} - \langle x, u_{k}\rangle  - \psi(x)\} dx
\\
&\geq&\int_{B_{2}^{n}(x_{0},\rho)}   \max\{ 0, \alpha_{k} - \langle x, u_{k}\rangle  - \psi(x)\} dx
\\
&=&\int_{B_{2}^{n}(0,\rho)}   \max\{ 0, \alpha_{k} - \langle x_{0}+x, u_{k}\rangle  - \psi(x_{0}+x)\} dx
\\
&\geq&\int_{B_{2}^{n}(0,\rho)}   \max\{ 0, \alpha_{k} - \langle x_{0}, u_{k}\rangle -\langle x, u_{k}\rangle - \psi(x_{0})-1\} dx
\end{eqnarray*}
By (\ref{lemma-float2-2}) the latter integral is bigger than
\begin{eqnarray*}
\int_{B_{2}^{n}(0,\rho)}   \max\{ 0,  -\langle x, u_{k}\rangle +\psi_{\delta}(x_{0})- \psi(x_{0})-2\} dx
\end{eqnarray*}
Since $\psi_{\delta}(x_{0})\geq\psi(x_{0})$ the latter integral is bigger than
\begin{eqnarray*}
&&\int_{B_{2}^{n}(0,\rho)\cap \{x:\langle x,u_{k}\rangle\leq0\}}   \max\{ 0,  -\langle x, u_{k}\rangle -2\} dx
\geq\int_{B_{2}^{n}(0,\rho)\cap \{x:\langle x,u_{k}\rangle\leq0\}}     -\langle x, u_{k}\rangle -2 dx.
\end{eqnarray*}
The latter integral is getting arbitrarily large if the sequence $(\|u_{k}\|)_{k=1}^{\infty}$ is not bounded.
This cannot be since all the integrals are bounded by $\delta \int_{\mathbb{R} ^n}e^{-\psi}dx$.
\par
By (\ref{lemma-float2-2})
$$
\alpha_{k}\leq \psi_{\delta}(x_{0})+\langle x_{0},u_{k}\rangle
\leq\psi_{\delta}(x_{0})+\|x_{0}\|\|u_{k}\|
$$
Since the sequence $(\|u_{k}\|)_{k=1}^{\infty}$ is bounded it follows that the sequence
$(\alpha_{k})_{k=1}^{\infty}$ is bounded from above. In the same way we show that the
sequence is also bounded from below.
\end{proof}	
\vskip 2mm

\begin{theorem} \label{T-Float}
Let $f = \exp(-\psi)$ be a function  in $LC$ and let $\delta \geq 0$. Then the floating operator 
$ F: LC \to LC  $
with $F(f)=f_{\delta}$
is an affine covariant mapping.
\end{theorem}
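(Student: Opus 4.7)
The plan is to verify the two defining properties of an affine covariant map separately: first the covariance identity $F(Af)=A\,F(f)$, and then continuity of $F$ in the $L_1$-topology.

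\textbf{Step 1 (Covariance).} Write $A=T+a$ with $T\in GL(n)$, $a\in\R^{n}$, and set $\psi=-\log f$, so that $Af=e^{-\psi\circ A}$. Fix $x_0$ in the interior of the domain of $\psi\circ A$ and apply Lemma \ref{lemma-float1} to $\psi\circ A$: we have
\[
(\psi\circ A)_\delta(x_0)=\sup_{(u,\alpha)}\alpha-\langle u,x_0\rangle,
\]
where the supremum is over $(u,\alpha)$ satisfying
\[
\int_{\R^n}\max\{0,\alpha-\langle x,u\rangle-\psi(Ax)\}\,dx=\delta\int_{\R^n}e^{-\psi(Ax)}\,dx .
\]
I would then perform the substitution $y=Ax$, which produces the common Jacobian $|\det T|^{-1}$ on both sides and hence cancels. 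Introducing $v=(T^{-1})^t u$ and $\beta=\alpha+\langle a,v\rangle$, the inner product $\langle x,u\rangle=\langle T^{-1}(y-a),u\rangle$ rewrites to $\langle y,v\rangle-\langle a,v\rangle$, so the constraint becomes the Lemma \ref{lemma-float1} constraint for $\psi$ with parameters $(v,\beta)$. The objective transforms as
\[
\alpha-\langle u,x_0\rangle=\beta-\langle v,Tx_0+a\rangle=\beta-\langle v,Ax_0\rangle,
\]
so $(\psi\circ A)_\delta(x_0)=\psi_\delta(Ax_0)$. Exponentiating gives $(Af)_\delta=f_\delta\circ A=A(f_\delta)$, which is the desired identity $F(Af)=A\,F(f)$.

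\textbf{Step 2 (Continuity).} Let $(f_m)$ converge to $f$ in $L_1$. By Lemma \ref{L1convtoptwconv}(i) it suffices to show that $f_{m,\delta}\to f_\delta$ pointwise on $\R^n\setminus\partial\overline{\operatorname{supp}(f)}$, and outside $\operatorname{supp}(f)$ both sides equal $0$. Thus I fix $x_0$ in the interior of $\operatorname{dom}(\psi)$ and prove $\psi_{m,\delta}(x_0)\to\psi_\delta(x_0)$.

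For the lower bound, pick any feasible $(u,\alpha)$ for $\psi_\delta(x_0)$, hold $u$ fixed and, for each $m$, use the monotonicity and continuity of
\[
\beta\longmapsto\int_{\R^n}\max\{0,\beta-\langle x,u\rangle-\psi_m(x)\}\,dx
\]
to choose $\alpha_m$ so that this integral equals $\delta\|f_m\|_1$. The integrand converges a.e.\ to the corresponding one for $\psi$ and is dominated, via Lemma \ref{UnifEst1}, by an integrable function of $x$ uniformly in $m$; together with $\|f_m\|_1\to\|f\|_1$ this forces $\alpha_m\to\alpha$. Hence $\psi_{m,\delta}(x_0)\geq\alpha_m-\langle u,x_0\rangle$ yields $\liminf_m\psi_{m,\delta}(x_0)\geq\psi_\delta(x_0)$.

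For the upper bound, Lemma \ref{lemma-float2} supplies optimal $(u_m,\alpha_m)$ with $\psi_{m,\delta}(x_0)=\alpha_m-\langle u_m,x_0\rangle$ satisfying the constraint for $\psi_m$. I would re-run the boundedness argument of Lemma \ref{lemma-float2}, but uniformly in $m$: pick $\rho>0$ with $B_2^n(x_0,\rho)\subset\operatorname{int}(\operatorname{dom}\psi)$; by Lemma \ref{L1convtoptwconv}(ii) $\psi_m\to\psi$ uniformly on $B_2^n(x_0,\rho)$, so $\psi_m$ is uniformly bounded above there for large $m$. Combined with $\delta\|f_m\|_1\to\delta\|f\|_1<\infty$, the same inequality chain used in Lemma \ref{lemma-float2} now bounds $\|u_m\|$ and $|\alpha_m|$ uniformly. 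Passing to a convergent subsequence $(u_{m_k},\alpha_{m_k})\to(u^\ast,\alpha^\ast)$, the integrand $\max\{0,\alpha_{m_k}-\langle x,u_{m_k}\rangle-\psi_{m_k}(x)\}$ converges a.e.\ to $\max\{0,\alpha^\ast-\langle x,u^\ast\rangle-\psi(x)\}$ and is dominated by an exponentially decaying integrable majorant coming from Lemma \ref{UnifEst1} together with the uniform bounds on $(u_m,\alpha_m)$; dominated convergence yields that $(u^\ast,\alpha^\ast)$ is feasible for $\psi_\delta(x_0)$. Therefore
\[
\limsup_m\psi_{m,\delta}(x_0)=\alpha^\ast-\langle u^\ast,x_0\rangle\leq\psi_\delta(x_0).
\]

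\textbf{Main obstacle.} Step 1 is essentially bookkeeping in a change of variables. The real work is the continuity argument in Step 2, and the core difficulty is the uniform control needed to pass to the limit: obtaining bounds on the optimizers $(u_m,\alpha_m)$ that do not depend on $m$, and producing an integrable majorant for the integrand in the constraint uniformly in $m$. Both relies crucially on the uniform exponential decay estimate in Lemma \ref{UnifEst1} and on the locally uniform convergence $\psi_m\to\psi$ from Lemma \ref{L1convtoptwconv}(ii).
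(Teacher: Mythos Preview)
Your overall strategy matches the paper's: prove covariance by an affine change of variables, and prove continuity by showing $(\psi_m)_\delta(x_0)\to\psi_\delta(x_0)$ via a liminf/limsup argument based on the variational formula of Lemma~\ref{lemma-float1}. Your covariance argument (change of variables directly in the formula of Lemma~\ref{lemma-float1}) is a clean alternative to the paper's route through epigraphs and the lifted map $\widetilde A(x,y)=(Ax,y)$; both are fine.

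The gap is in Step~2, and it is exactly the part you flag as the ``main obstacle'' but then dismiss too quickly. Lemma~\ref{UnifEst1} only gives $\psi_m(x)\ge \|x\|/\rho - t$, so
\[
\max\{0,\alpha_m-\langle x,u_0\rangle-\psi_m(x)\}\le \max\{0,\alpha_m+\|u_0\|\,\|x\|-\|x\|/\rho+t\},
\]
and the right-hand side has compact support only when $\|u_0\|<1/\rho$. Nothing guarantees this: $\rho$ is determined by the level set $\{\psi\le\psi(0)+2\}$ and cannot be made small, while $u_0$ is fixed by $\psi$. So Lemma~\ref{UnifEst1} alone does \emph{not} furnish an integrable majorant, and this is precisely where the real work lies. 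In the paper, establishing the uniform bound \eqref{DCT} (that these integrands are supported in a fixed ball $RB_2^n$ and uniformly bounded there) occupies the bulk of the proof of Proposition~\ref{prop-cont}; it is done by contradiction, using convexity of the $\psi_m$ and cone-volume estimates inside the epigraphs, not by a pointwise decay bound. Your plan needs either that argument or a substitute.

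One simplification you are missing: for the \emph{upper} bound you do not need dominated convergence at all. Once you have bounded $(\|u_m\|,|\alpha_m|)$ uniformly and passed to a subsequence $(u_{m_k},\alpha_{m_k})\to(u^\ast,\alpha^\ast)$, Fatou's lemma immediately gives
\[
\int\max\{0,\alpha^\ast-\langle x,u^\ast\rangle-\psi(x)\}\,dx\le\liminf_k\int\max\{0,\alpha_{m_k}-\langle x,u_{m_k}\rangle-\psi_{m_k}(x)\}\,dx=\delta\|f\|_1,
\]
so $(u^\ast,\alpha^\ast)$ is admissible for $\psi_\delta(x_0)$ and $\limsup(\psi_m)_\delta(x_0)\le\psi_\delta(x_0)$. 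This is what the paper does. The domination issue is thus confined to the \emph{lower} bound, where you genuinely need $\alpha_m(u_0)\to\alpha(u_0)$, and that is where the hard compact-support argument is unavoidable.
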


\noindent
The next corollary follows  immediately from the theorem, together with Remark \ref{remark2}.
\begin{corollary} \label{C-Float}
Let $f = \exp(-\psi)$ be be a function in $LC$ and let $\delta \geq 0$. Then  for all $\lambda \in \mathbb{R}$, 
$$ 
    g(f_\delta), \hskip 2mm   s(f_\delta),  \hskip 2mm  \lambda g(f_\delta)+ (1-\lambda)   s(f_\delta)
$$
are affine contravariant points.
\end{corollary}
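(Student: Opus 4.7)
My plan is to assemble this corollary from three already-available ingredients: Theorem \ref{T-Float} (the floating operator $F:f\mapsto f_\delta$ is in $\mathfrak{A}$), Proposition \ref{example2} (the centroid $g$ and the Santaló point $s$ are in $\mathfrak{P}$), and Remark \ref{remark2}(i) (composition $p\circ P$ of an affine contravariant point with an affine covariant mapping lies in $\mathfrak{P}$, and affine combinations of points in $\mathfrak{P}$ lie in $\mathfrak{P}$). Nothing new needs to be computed; the corollary is a direct closure-under-operations statement.

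Concretely, first I note that $f\mapsto g(f_\delta)$ is precisely $g\circ F$ and $f\mapsto s(f_\delta)$ is precisely $s\circ F$. By Theorem \ref{T-Float}, $F\in\mathfrak{A}$, and by Proposition \ref{example2}, $g,s\in\mathfrak{P}$. Hence, by the first half of Remark \ref{remark2}(i), both $g\circ F$ and $s\circ F$ belong to $\mathfrak{P}$, i.e., are affine contravariant points. (For the reader's convenience one may also write this out: for any nonsingular affine $A$, $(g\circ F)(Af)=g(F(Af))=g(A(F(f)))=A^{-1}(g(F(f)))=A^{-1}((g\circ F)(f))$, and continuity follows by composing the $L_1$-continuity of $F$ with the continuity of $g$; the argument for $s\circ F$ is identical.)

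For the third map, $f\mapsto \lambda g(f_\delta)+(1-\lambda)s(f_\delta)$, I apply the second half of Remark \ref{remark2}(i) to $p:=g\circ F$ and $q:=s\circ F$, both of which now lie in $\mathfrak{P}$. The affine combination $\lambda p+(1-\lambda)q$ therefore also belongs to $\mathfrak{P}$. (The reason this is legitimate with coefficients summing to one is that for $A=T+a$, the map $A^{-1}y=T^{-1}(y-a)$ respects affine combinations: $\lambda A^{-1}p(f)+(1-\lambda)A^{-1}q(f)=A^{-1}(\lambda p(f)+(1-\lambda)q(f))$.)

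Since the statement is a three-line deduction from results already proved in the paper, there is no real obstacle; the only thing to be a bit careful about is invoking Remark \ref{remark2}(i) in the correct order (first compose with $F$, then take the affine combination) rather than attempting to combine $g$ and $s$ before composing, which would also work but is less direct. Accordingly, the proof proposal is simply to cite Theorem \ref{T-Float}, Proposition \ref{example2}, and Remark \ref{remark2}(i) in that sequence.
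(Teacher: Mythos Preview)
Your proposal is correct and matches the paper's approach exactly: the paper states that the corollary ``follows immediately from the theorem, together with Remark \ref{remark2},'' which is precisely the combination of Theorem \ref{T-Float}, Proposition \ref{example2}, and Remark \ref{remark2}(i) that you have spelled out.
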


We show first the affine invariance property. Recall the super-level sets $G_f(t) =  \{ x\in \R^n: f(x)\ge t\ \} $ of a function $f$,  introduced in (\ref{superlevel}).
Now we also need the sub-level sets $E_\psi(t)$ for a convex function $\psi: \R^n \rightarrow \R \cup \{ \infty \}$. For \( t\in \R\)  they are defined as 
\begin{equation} \label{DefSublevel}
E_\psi(t)= \{ x\in \R^n: \psi(x)\le t\}.
\end{equation}
It's clear that for the log-concave function $f=e^{-\psi(x)}$ the following identity holds, 
\be  
G_f(t)= E_\psi(-\log t).\label{levelsetrelation}
\ee 

\begin{lemma}\label{LoeInv1}
Let $\psi: \R^n \rightarrow \R \cup \{ \infty \}$
be a  convex function and let \( f=e^{-\psi}\) be integrable and nondegenerate. Let $\delta \geq0$. Then we have for any \( A\in \mathcal{A}\),  
$$
A(\psi_\delta)=(A\psi)_{\delta} \hskip 10mm \text{  and } \hskip 10mm   A(f_\delta)=(Af)_{\delta}.
$$
\end{lemma}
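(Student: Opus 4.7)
The plan is to reduce both identities to a single change-of-variables computation on epigraphs in $\mathbb{R}^{n+1}$. Since $Af = e^{-A\psi}$ and (by the definition in (\ref{flotlog})) $(Af)_\delta = e^{-(A\psi)_\delta}$ and $A(f_\delta) = e^{-A(\psi_\delta)}$, the second equality will follow at once from the first. So I will concentrate on proving $A(\psi_\delta) = (A\psi)_\delta$.

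Write $A = T + a$ with $T \in GL(n)$ and introduce the affine map $\tilde A \colon \mathbb{R}^{n+1} \to \mathbb{R}^{n+1}$ by $\tilde A(x, x_{n+1}) = (Ax, x_{n+1})$, so that $|\det \tilde A| = |\det T|$. Because $\tilde A$ acts trivially on the last coordinate, we have $\operatorname{epi}(A\psi) = \tilde A^{-1}(\operatorname{epi}(\psi))$ and $\operatorname{epi}(A(\psi_\delta)) = \tilde A^{-1}(\operatorname{epi}(\psi_\delta))$. The first step is to rewrite the definition (\ref{flotfunct}) of $(A\psi)_\delta$ by substituting $H = \tilde A^{-1}(\tilde H)$, which sets up a bijection between the half-spaces in $\mathbb{R}^{n+1}$ appearing in the definition for $A\psi$ and those appearing in the definition for $\psi$.

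The second step is to check that this bijection respects the defining inequality. On the one hand, the linear change of variables gives
\[
\operatorname{vol}_{n+1}\bigl(H^- \cap \operatorname{epi}(A\psi)\bigr) = |\det T|^{-1}\,\operatorname{vol}_{n+1}\bigl(\tilde H^- \cap \operatorname{epi}(\psi)\bigr).
\]
On the other hand, substituting $y = Ax$ yields $\int e^{-A\psi(x)}\,dx = |\det T|^{-1}\int e^{-\psi(y)}\,dy$. The two factors of $|\det T|^{-1}$ cancel, so $\operatorname{vol}_{n+1}(H^- \cap \operatorname{epi}(A\psi)) \leq \delta \int e^{-A\psi}$ holds if and only if $\operatorname{vol}_{n+1}(\tilde H^- \cap \operatorname{epi}(\psi)) \leq \delta \int e^{-\psi}$. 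Taking intersections over the corresponding families of half-spaces $H^+ = \tilde A^{-1}(\tilde H^+)$ gives $\operatorname{epi}((A\psi)_\delta) = \tilde A^{-1}(\operatorname{epi}(\psi_\delta)) = \operatorname{epi}(A(\psi_\delta))$, hence $(A\psi)_\delta = A(\psi_\delta)$.

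There is no real obstacle here; the argument is bookkeeping of the change of variables. The only minor point worth mentioning is that only half-spaces $H^-$ whose defining normal has nonzero last coordinate can contribute, as observed in the proof of Lemma \ref{lemma-float1}, but since $\tilde A$ preserves the $(n+1)$-th coordinate this class of half-spaces is clearly preserved by the bijection $H \leftrightarrow \tilde H$.
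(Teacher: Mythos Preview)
Your proof is correct and follows essentially the same approach as the paper: lift $A$ to $\tilde A$ on $\mathbb R^{n+1}$, use $\operatorname{epi}(A\psi)=\tilde A^{-1}(\operatorname{epi}\psi)$, and verify that the floating construction commutes with $\tilde A^{-1}$. The only cosmetic difference is that the paper first states the set-level identity $A(C_\delta)=(AC)_\delta$ as a separate observation and then argues via sub-level sets, whereas you unpack the half-space condition directly; the content is the same.
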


\begin{proof}
Observe  first that  for a convex but not necessarily bounded set \(C\in \R^n\)
 with  finite measure \(m(C)\) and \( A\in \mathcal{A}\)  one has 
	\be
	A(C_\delta)= (AC)_{\delta}. \label{affinvset}
	\ee
Then note that it is enough to show that 	\( A\psi_\delta=(A\psi)_{\delta}\). The statement about $f=e^{-\psi}$ then follows easily.
	To prove the assertion \( A(\psi_\delta)=(A\psi)_{\delta}\), we show that  for all \( t\in \R\) their sub-level sets coincide,  namely
	\( E_{A(\psi_\delta)}(t)=E_{(A\psi)_{\delta }}(t)\).
With (\ref{levelsetrelation}) we then deduce that 
$$ 
G_{A(f_\delta)}(t)=E_{A(\psi_\delta)}(-\log t)= E_{(A\psi)_\delta}(-\log t)= G_{(Af)_\delta}(t),
$$ 
which implies that \( A(f_\delta)=(Af)_{\delta}\).
Let $t \in \mathbb {R}^n$. We show now that \( E_{A(\psi_\delta)}(t)=E_{(A\psi)_{\delta }}(t)\). On the one hand
\begin{eqnarray*}
	E_{A(\psi_\delta)}(t)&=& \{ x\in \R^n: A\psi_\delta(x)\le t \}
	                            = \{ x\in \R^n: \psi_\delta(Ax)\le t\} \\
	                            &=& A^{-1}\{y\in \R^n: \psi_\delta(y)\le t\}
	                            = A^{-1}E_{\psi_\delta}(t).
	\end{eqnarray*}
   On the other hand, we show that \( E_{(A\psi)_{\delta}}(t)=A^{-1}E_{\psi_\delta}(t)\).   
 For $z=(x,y) \in \mathbb{R}^{n} \times \mathbb{R}$, we denote by $\widetilde{A}$ the map
   \be 
   \widetilde{A} z = \widetilde{A} (x,y) = (Ax, y).  \nonumber
   \ee
Then    $ \widetilde{A}^{-1}  z = \widetilde{A}^{-1} (x,y) = (A^{-1}x, y)$ and
it is clear that 
$$
\epi(A \psi) =  \widetilde{A}^{-1} (\epi( \psi)).
$$
Thus by the definition of the floating set and (\ref{affinvset}), 
   \[ 
   \epi((A\psi)_{\delta})= (\epi(A\psi))_{\delta }
   =(\widetilde{A}^{-1}(\epi(\psi)))_{\delta}=\widetilde{A}^{-1}((\epi(\psi))_\delta) = \widetilde{A}^{-1}(\epi(\psi_\delta)).
   \]
   It follows that for all $t \in \mathbb{R}$,    \begin{eqnarray*}
   	&&   (E_{(A\psi)_{\delta}}(t),t) 
	\\
   	&&= \epi((A\psi)_{\delta }) \cap \{ x\in \R^{n+1}: x_{n+1}=t\}
   	= \widetilde{A}^{-1}(\epi(\psi_\delta)) \cap \{ x\in \R^{n+1}: x_{n+1}=t\}    \\
   	&&= \widetilde{A}^{-1}(\epi(\psi_\delta)) \cap \widetilde{A}^{-1}\left(\{ x\in \R^{n+1}: x_{n+1}=t\} \right)\\
   	&&= \widetilde{A}^{-1} \left[ \epi(\psi_\delta) \cap \{ x\in \R^{n+1}: x_{n+1}=t\}   \right]
   	= \widetilde{A}^{-1} (E_{\psi_\delta}(t),t)
   	= (A^{-1} E_{\psi_\delta}(t),t).
   \end{eqnarray*}
\end{proof}

\noindent
Next we show the continuity of the floating operator.

\begin{proposition}\label{prop-cont} 
Let $\psi:\mathbb R^{n}\to\mathbb R\cup\{\infty\}$ be a convex function and let $\psi_{m}:\mathbb R^{n}\to\mathbb R\cup\{\infty\}$,
$m\in\mathbb N$, be a sequence of convex functions such that the sequence 
$(f_{m})_{m=1}^{\infty}=(e^{-\psi_{m}})_{m=1}^{\infty}$
converges to $f=e^{-\psi}$ in $L_{1}$.
Then, for every $\delta > 0$, the sequence $((f_{m})_{\delta})_{m=1}^{\infty}$ converges to $f_{\delta}$ in $L_{1}$.
\end{proposition}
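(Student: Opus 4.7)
The plan is to reduce the $L_{1}$ convergence to pointwise convergence of $(\psi_m)_\delta$ to $\psi_\delta$ on $\intt(\dom(\psi))$, and then exploit the variational formula from Lemmas~\ref{lemma-float1}--\ref{lemma-float2} through a compactness argument on the dual pairs $(u_m,\alpha_m)$.

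First I would invoke Lemma~\ref{L1convtoptwconv}(i): it suffices to show $(f_m)_\delta(x)\to f_\delta(x)$ for $x\notin\partial\overline{\supp(f)}$. For $x\notin\overline{\supp(f)}$ both sides vanish eventually, using Lemma~\ref{ptwcontoflevelset} to see that $\overline{\supp(f_m)}$ Hausdorff-converges to $\overline{\supp(f)}$ together with the inclusion $\supp(f_\delta)\subseteq\supp(f)$ read off from (\ref{flotfunct}). So the real task is to show, for each $x_0\in\intt(\dom(\psi))$, that $(\psi_m)_\delta(x_0)\to\psi_\delta(x_0)$.

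Fix such an $x_0$. By Lemma~\ref{L1convtoptwconv}(ii), $\psi_m\to\psi$ uniformly on a compact ball $B=B_2^n(x_0,\rho_0)\subset\intt(\dom(\psi))$, so there is a uniform upper bound $\psi_m\le M$ on $B$. Applying Lemma~\ref{lemma-float2} to each $\psi_m$ I select $(u_m,\alpha_m)$ with $(\psi_m)_\delta(x_0)=\alpha_m-\langle u_m,x_0\rangle$ and (\ref{lemma-float1-2}) for $\psi_m$. Reproducing the estimate from the proof of Lemma~\ref{lemma-float2}, using $B$ as test region, the uniform bound $M$, and the convergence $\int e^{-\psi_m}\to\int e^{-\psi}$, I get that $(\|u_m\|)$ and $(|\alpha_m|)$ are bounded. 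Passing to a subsequence, I may assume $(u_{m_k},\alpha_{m_k})\to(u^*,\alpha^*)$, so that $\lim_k(\psi_{m_k})_\delta(x_0)=\alpha^*-\langle u^*,x_0\rangle$.

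To identify $(u^*,\alpha^*)$ as feasible for $\psi$, I need
\[
\int_{\R^n}\max\{0,\alpha^*-\langle x,u^*\rangle-\psi(x)\}\,dx \;=\; \delta\int_{\R^n} e^{-\psi}\,dx.
\]
The right-hand side follows from $L_1$-convergence. For the left, Lemma~\ref{L1convtoptwconv}(ii) gives pointwise a.e.\ convergence of the integrands. To obtain an integrable dominant I would strengthen Lemma~\ref{UnifEst1} to: for every $R>0$ there exist $C_R$ and $m_R$ with $\psi_m(x)\ge R\|x\|-C_R$ for all $m\ge m_R$ and all $x$. This follows by rerunning Lemma~\ref{UnifEst1}'s proof with the threshold $\psi(0)+2$ replaced by a larger constant, combined with Hausdorff convergence of sub-level sets. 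Choosing $R>\sup_m\|u_m\|$, the inequality $\alpha_m+\|u_m\|\|x\|\ge\psi_m(x)\ge R\|x\|-C_R$ forces the support of each positive part to lie in a common compact ball on which it is uniformly bounded, enabling dominated convergence.

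Feasibility of $(u^*,\alpha^*)$ and Lemma~\ref{lemma-float1} then give $\limsup_m(\psi_m)_\delta(x_0)\le\psi_\delta(x_0)$. For the reverse inequality, I pick a maximizer $(u,\alpha)$ for $\psi_\delta(x_0)$ from Lemma~\ref{lemma-float2}, perturb to $(u,\alpha-\eps)$ so that the constraint (\ref{lemma-float1-2}) holds strictly for $\psi$, and apply the same dominated-convergence argument to see that $(u,\alpha-\eps)$ satisfies the constraint for $\psi_m$ for $m$ large. Lemma~\ref{lemma-float1} yields $(\psi_m)_\delta(x_0)\ge(\alpha-\eps)-\langle u,x_0\rangle=\psi_\delta(x_0)-\eps$, and letting $\eps\to 0$ finishes pointwise convergence. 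The main technical obstacle is the uniform exponential decay estimate on the $\psi_m$ at an arbitrarily prescribed rate $R$: the dominant must survive even when the dual vectors $u_m$ have the same order of magnitude as the reciprocal of the scale $\rho$ produced by Lemma~\ref{UnifEst1}, which is precisely what forces the sharpening of that lemma described above.
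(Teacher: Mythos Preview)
Your overall architecture matches the paper's: reduce to pointwise convergence of $(\psi_m)_\delta$ on $\intt(\dom\psi)$, then use the variational formula of Lemmas~\ref{lemma-float1}--\ref{lemma-float2} together with compactness of the dual pairs $(u_m,\alpha_m)$. The $\limsup$ direction is essentially the paper's, except that the paper uses Fatou's lemma rather than dominated convergence, which sidesteps the need for an integrable majorant altogether.

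The real gap is your ``strengthened Lemma~\ref{UnifEst1}'': the claim that for every $R>0$ one has $\psi_m(x)\ge R\|x\|-C_R$ for all large $m$ is simply false. Take $\psi=\psi_m=\|\cdot\|$; then $\|x\|\ge R\|x\|-C_R$ for all $x$ forces $R\le 1$. More to the point, your proposed proof goes the wrong way: enlarging the threshold $\psi(0)+2$ to $\psi(0)+K$ enlarges the sublevel set, hence enlarges $\rho$, hence \emph{decreases} the slope $1/\rho$. Working through the convexity step with threshold $K$ gives a slope of order $K/\rho_K$, and for $\psi=\|\cdot\|$ this ratio is bounded by $1/2$ regardless of $K$. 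Since the optimal $u$ at $x_0$ can have $\|u\|$ arbitrarily close to the asymptotic growth rate of $\psi$ (in the $\psi=\|\cdot\|$ example, $|u|=|x_0|/\sqrt{2\delta+x_0^2}$, which exceeds $1/2$ for $|x_0|$ moderately large), you cannot in general arrange $R>\sup_m\|u_m\|$ by this route.

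This breaks your dominated-convergence argument in both directions. For the $\limsup$ you can repair it by switching to Fatou, as the paper does. For the $\liminf$, your $\eps$-perturbation idea is attractive but still needs $\limsup_m\int\max\{0,\alpha-\eps-\langle x,u\rangle-\psi_m(x)\}\,dx\le\int\max\{0,\alpha-\eps-\langle x,u\rangle-\psi(x)\}\,dx$, which again requires a dominant. The paper obtains this dominant by a substantially longer geometric argument: it fixes the optimal $u_0$ for $\psi$, defines $\alpha_m(u_0)$ by the equality constraint for $\psi_m$, and then proves via a case analysis and cone-volume estimates that the supports $\{x:\alpha_m(u_0)-\langle x,u_0\rangle\ge\psi_m(x)\}$ lie in a common ball. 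That is exactly the step you tried to shortcut, and it is the heart of the proof.
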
 

\begin{proof}
By (\ref{integral}), \( (f_m)_\delta\in L_1\) for all $m\in\mathbb N$ and  \(f_\delta\in L_1\).
It  suffices to show that the sequence \( ((\psi_m)_\delta )_{m=1}^{\infty}\) converges pointwise a.e. to $\psi_{\delta}$.
Indeed, suppose  this is true. 
Then the sequence $((f_m)_\delta)_{m=1}^{\infty} = (e^{-(\psi_m)_\delta})_{m=1}^{\infty}$ 
converges to $f_\delta =e^{-\psi_\delta}$ pointwise a.e..
The assumption  that the sequence \( (f_m)_{m=1}^{\infty}\) converges to $f$ in \(L_1\) 
implies  $\lim_{m\to\infty}\int_{}f_{m}(x)dx=\int_{}f(x)dx$ and implies by  Lemma  \ref{L1convtoptwconv}  
that the sequence \( (f_m)_{m=1}^{\infty}\) converges to $f$ 
pointwise  a.e..
Moreover, we have for all $m\in\mathbb N$  
$$
(f_m)_\delta =  e^{-(\psi_m)_\delta} \leq e^{-\psi_m} =f_m.
$$
 The generalized  Dominated Convergence Theorem (e.g., \cite{folland} p. 59, exercise 20) then yields
\[\lim_{m\to\infty} \int_{\mathbb{R}^n} (f_m)_\delta(x)dx = \int_{\mathbb{R}^n} f_\delta(x)dx
\]
and  Lemma \ref{l1conviffconvofinteg} that the sequence \( ((f_m)_\delta)_{m=1}^{\infty}\) 
converges to $f_{\delta}$ in \(L_1\).
\par
Since the sequence $(f_{m})_{m=1}^{\infty}$ converges in $L_{1}$ to $f$, the sequence also
converges pointwise to $f$ on the interior of the support of $f$. Therefore the sequence $(\psi_{m})_{m=1}^{\infty}$
converges pointwise to $\psi$ on the interior of the domain of $\psi$.
\par
We show that for all $x_0$ in $\mathbb R^{n}\setminus\partial\operatorname{dom}\psi$,
\begin{equation}\label{lemma-cont1-91}
\lim_{m \to \infty} (\psi_m)_{\delta} (x_0) = \psi_\delta (x_0).
\end{equation}
The case $x_{0}\in\mathbb R^{n}\setminus\overline{\operatorname{dom}\psi}$ is easy:
$\psi(x_{0})=\infty$ and $\lim_{m\to\infty}\psi_{m}(x_{0})=\psi(x_{0})$. Since 
$\psi(x_{0})\leq\psi_{\delta}(x_{0})$ and $\psi_{m}(x_{0})\leq(\psi_{m})_{\delta}(x_{0})$
we get
$$
\psi_{\delta}(x_{0})=\infty=\lim_{m\to\infty}(\psi_{m})_{\delta}(x_{0}).
$$
Now the case $x_{0}\in\operatorname{int}(\operatorname{dom}\psi)$.
We show first that for all $x_0$ in the interior of the domain of $\psi$, 
\begin{equation}\label{lemma-cont1-1}
\liminf_{m \to \infty} (\psi_m)_{\delta} (x_0) \geq \psi_\delta (x_0).
\end{equation}
If $\psi_{\delta}(x_{0})=\psi(x_{0})$ then
$$
\psi_{\delta}(x_{0})=\psi(x_{0})=\lim_{m\to\infty}\psi_{m}(x_{0})\leq\liminf_{m\to\infty}(\psi_{m})_{\delta}(x_{0}).
$$
Therefore, we can now assume that for some $\epsilon>0$
\begin{equation}\label{lemma-float1-21}
\psi(x_{0})+\epsilon\leq\psi_{\delta}(x_{0}).
\end{equation}
Let $\alpha$ be defined by (\ref{lemma-float1-2}) for the function $\psi$ and for $m\in\mathbb N$
let $\alpha_{m}$ be defined by (\ref{lemma-float1-2}) for the function $\psi_{m}$.
By Lemma \ref{lemma-float2}, there is $(u_0, \alpha(u_0))$ such that $\psi_\delta(x_0) = \alpha(u_0) - \langle u_0, x_0 \rangle $.
Therefore, 
\begin{eqnarray*}
(\psi_{m})_{\delta} (x_0)
=\sup_{u \in \mathbb{R}^n} \alpha_m(u) - \langle u, x_0 \rangle \geq \alpha_m(u_0) - \langle u_0, x_0 \rangle.
\end{eqnarray*}
In order to show (\ref{lemma-cont1-1}) it is enough to show 
\begin{equation}\label{lemma-cont1-92}
\lim_{m \to \infty} \alpha_m(u_0) = \alpha(u_0).
\end{equation}
We do this. By definition (\ref{lemma-float1-2}) of $\alpha_{m}$ we get for all $m\in\mathbb N$
\begin{equation}\label{lemma-cont1-81}
\delta \int_{\mathbb{R} ^n}e^{-\psi_m} dx
=\int_{\mathbb{R}^n}   \max\{ 0, \alpha_m(u_0)  - \langle x, u_0\rangle  - \psi_m(x)\} dx.
\end{equation}
Since $(e^{-\psi_{m}})_{m\in\mathbb N}$ converges in $L_{1}$ to $e^{-\psi}$
$$
\delta \int_{\mathbb{R} ^n}e^{-\psi}dx= \lim_{m\to\infty} \delta \int_{\mathbb{R} ^n}e^{-\psi_m} dx.
$$
By (\ref{lemma-cont1-81})
\begin{equation}\label{lemma-cont1-2}
\delta \int_{\mathbb{R} ^n}e^{-\psi}dx=
\lim_{m\to\infty} \int_{\mathbb{R}^n}   \max\{ 0, \alpha_m(u_0)  - \langle x, u_0\rangle  - \psi_m(x)\} dx.
\end{equation}
We justify that we can interchange limit and integral. At this point we know that 
$\lim_{m\to\infty} \psi_{m}(x)=\psi(x)$, but we do not know that $\lim_{m\to\infty}\alpha_{m}(x_{0})$ exists.
We want to apply the Dominated Convergence Theorem. We prove now that there is
a dominating, integrable function.
For this, it is enough to show that 
 there exists $R>0$ and $c>0$ such that for all $m\in\mathbb N$ and for all $x \in \mathbb{R}^n$
\begin{equation}\label{DCT} 
\max\{ 0, \alpha_m(u_0)  - \langle x, u_0\rangle  - \psi_m(x)\}  \leq c \,  \1_{R B^n_2}.
\end{equation}
\newline
The first step towards that goal is to show that there is $R >0$  such that for all $y \in \mathbb{R}^n$ 
and all $m \in \mathbb{N}$
with $\alpha_m(u_0)  - \langle y, u_0\rangle   \geq \psi_m(y)$ we have that $\|y\| \leq R$.
Suppose that is not the case, i.e.,  for every $\ell\in\mathbb N$ there are 
$m_{\ell}$ and  $y_{m_{\ell}}$ such that $\|y_{m_{\ell}}\| \geq \ell$ and 
\begin{equation}\label{lemma-cont1-4}
\alpha_{m_{\ell}}(u_0)  - \langle y_{m_{\ell}}, u_0\rangle \geq \psi_{m_{\ell}}(y_{m_{\ell}}).
\end{equation}
In fact, we may assume that 
\begin{equation}\label{lemma-cont1-99}
\lim_{\ell\to\infty}\|y_{m_{\ell}}\|=\infty
\end{equation}
and that the sequence $\|y_{m_{\ell}}\|$, $\ell\in\mathbb N$, is monotonely increasing.
First consider the case: There is a subsequence $m_{\ell_{i}}$, $i\in\mathbb N$, such that
for all $i\in\mathbb N$
\begin{equation}\label{lemma-cont1-5}
\psi_{\delta}(x_{0})\leq \alpha_{m_{\ell_{i}}}(u_{0})-\langle x_{0},u_{0}\rangle.
\end{equation}
To keep notation simple we denote this subsequence of a subsequence again by $m_{i}$, $i\in\mathbb N$.
There are $\rho>0$ and $M_{0}$ such that for all $m_{i}$, $i\in\mathbb N$, with $m_{i}\geq M_{0}$
\begin{eqnarray}\label{lemma-cont1-6}
&&B_{2}^{n+1}\left(\left(x_{0},\psi(x_{0})+\frac{\epsilon}{2}\right),\frac{\rho}{\|u_{0}\|}\right)
\\
&&\subseteq\operatorname{epi}\psi_{m_{i}}
\cap \{(x,s)|s\leq \alpha_{m_{i}}(x_{0})-\langle x_{},u_{0}\rangle\}
=\{(x,s)|\psi_{m_{i}}(x)\leq s\leq \alpha_{m_{i}}(x_{0})-\langle x_{},u_{0}\rangle\}.
\nonumber
\end{eqnarray}
We may assume that $\max\{\rho,\frac{\rho}{\|u_{0}\|}\}<\frac{\epsilon}{4}$ where $\epsilon$ is given by (\ref{lemma-float1-21}).
We prove (\ref{lemma-cont1-6}). Since $x_{0}\in\operatorname{int}(\operatorname{dom}(\psi))$
we can choose $\rho>0$ so small that $B_{2}^{n}(x_{0},\frac{\rho}{\|u_{0}\|})$ is a compact subset
of $\operatorname{int}(\operatorname{dom}(\psi))$ and, by continuity of $\psi$, for all $x\in B_{2}^{n}(x_{0},\frac{\rho}{\|u_{0}\|})$
\begin{equation}\label{lemma-cont1-96}
|\psi(x_{0})-\psi(x)|<\frac{\epsilon}{10}.
\end{equation}
Moreover, by Lemma \ref{L1convtoptwconv} the sequence $(\psi_{m})_{m\in\mathbb N}$ 
converges uniformly on $B_{2}^{n}(x_{0},\frac{\rho}{\|u_{0}\|})$ to $\psi$.
Therefore there is $M_{1}$ such that
for all $m\geq M_{1}$ and
all $x\in B_{2}^{n}(x_{0},\frac{\rho}{\|u_{0}\|})$
\begin{equation}\label{lemma-cont1-97}
|\psi(x)-\psi_{m}(x)|<\frac{\epsilon}{10},
\end{equation}
where $\epsilon$ is given by (\ref{lemma-float1-21}). We show that for all $i\in\mathbb N$
with $m_{i}\geq M_{1}$
\begin{equation}\label{lemma-float1-93}
B_{2}^{n+1}\left(\left(x_{0},\psi(x_{0})+\frac{\epsilon}{2}\right),\frac{\epsilon}{4}\right)
\subseteq\operatorname{epi}\psi_{m_{i}}.
\end{equation}
Indeed, let $(x,s)\in B_{2}^{n+1}\left(\left(x_{0},\psi(x_{0})+\frac{\epsilon}{2}\right),\frac{\epsilon}{4}\right)$. Then
$$
\left\|(x,s)-\left(x_{0},\psi(x_{0})+\frac{\epsilon}{2}\right)\right\|\leq\frac{\epsilon}{4}
$$
which implies
\begin{equation}\label{lemma-cont1-22}
\left|s-\psi(x_{0})-\frac{\epsilon}{2}\right|\leq\frac{\epsilon}{4}.
\end{equation}
Therefore
\begin{equation}\label{lemma-cont1-95}
\psi(x_{0})+\frac{\epsilon}{4}\leq s\leq\psi(x_{0})+\frac{3}{4}\epsilon.
\end{equation}
By (\ref{lemma-cont1-97}), (\ref{lemma-cont1-96}) and (\ref{lemma-cont1-95})
we have for all $i\in\mathbb N$ with $m_{i}\geq M_{1}$ and $(x,s)\in B_{2}^{n+1}\left(\left(x_{0},\psi(x_{0})+\frac{\epsilon}{2}\right),\frac{\epsilon}{4}\right)$
$$
\psi_{m_{i}}(x)-\frac{\epsilon}{10}
<\psi(x)\leq\psi(x_{0})+\frac{\epsilon}{10}<s-\frac{3}{20}\epsilon
$$
and thus $\psi_{m_{i}}(x)<s$
which means that $(x,s)\in\operatorname{epi}\psi_{m_{i}}$ and we have shown (\ref{lemma-float1-93}). On the other hand,
by (\ref{lemma-float1-21}) and (\ref{lemma-cont1-5}) we have for all $i\in\mathbb N$
$$
\psi_{}(x_{0})+\epsilon\leq \alpha_{m_{i}}(u_{0})-\langle x_{0},u_{0}\rangle .
$$
By (\ref{lemma-cont1-95}) it follows for all $i\in\mathbb N$ and
$(x,s)\in B_{2}^{n+1}\left(\left(x_{0},\psi(x_{0})+\frac{\epsilon}{2}\right),\frac{\epsilon}{4}\right)$
\begin{eqnarray*}
s+\frac{\epsilon}{4}&\leq& \alpha_{m_{i}}(u_{0})-\langle x_{0},u_{0}\rangle
\leq\alpha_{m_{i}}(u_{0})-\langle x_{},u_{0}\rangle+\langle x-x_{0},u_{0}\rangle
\\
&\leq&\alpha_{m_{i}}(u_{0})-\langle x_{},u_{0}\rangle+\|u_{0}\|\|x-x_{0}\|.
\end{eqnarray*}
Since $\rho<\frac{\epsilon}{4}$ and $\|u_{0}\|\|x-x_{0}\|<\rho$ we have for all $i\in\mathbb N$
and $(x,s)\in B_{2}^{n+1}\left(\left(x_{0},\psi(x_{0})+\frac{\epsilon}{2}\right),\frac{\epsilon}{4}\right)$
$$
s\leq\alpha_{m_{i}}(u_{0})-\langle x_{},u_{0}\rangle.
$$
Thus we have established (\ref{lemma-cont1-6}).
Now we observe that for all $i\in\mathbb N$ with $m_{i}\geq M_{1}$
\begin{equation}\label{lemma-cont1-98}
(y_{m_{i}},\psi_{m_{i}}(y_{m_{i}}))\in\operatorname{epi}\psi_{m_{i}}
\cap \{(x,s)|s\leq \alpha_{m_{i}}(u_{0})-\langle x_{},u_{0}\rangle\}.
\end{equation}
Indeed, for all $i\in\mathbb N$ we have 
$(y_{m_{i}},\psi_{m_{i}}(y_{m_{i}}))\in\operatorname{epi}\psi_{m_{i}}$ and
by (\ref{lemma-cont1-4})
$$
\alpha_{m_{i}}(u_{0})-\langle y_{m_{i}},u_{0}\rangle\geq\psi_{m_{i}}(y_{m_{i}}).
$$
Therefore, by convexity, (\ref{lemma-cont1-6}) and (\ref{lemma-cont1-98}) we have for all $i\in\mathbb N$
\begin{eqnarray*}
&&\left[(y_{m_{i}},\psi_{m_{i}}(y_{m_{i}})),B_{2}^{n+1}
\left(\left(x_{0},\psi(x_{0})+\frac{\epsilon}{2}\right),\frac{\rho}{\|u_{0}\|}\right)\right]
\\
&&
\subseteq \operatorname{epi}\psi_{m_{i}}
\cap\{(x,s)|s\leq \alpha_{m_{i}}(u_{0})-\langle x_{},u_{0}\rangle\}
=\{(x,s)|\psi_{m_{i}}(x)\leq s\leq \alpha_{m_{i}}(u_{0})-\langle x,u_{0}\rangle\}.
\end{eqnarray*}
Consequently
\begin{eqnarray*}
&&\delta \int_{\mathbb{R} ^n}e^{-\psi_{m_{i}}} dx
=\operatorname{vol}_{n+1}\left(\{(x,s)|\psi_{m_{i}}(x)\leq s\leq \alpha_{m_{i}}(u_{0})-\langle x,u_{0}\rangle\}\right)
\\
&&\geq\frac{\rho^{n}}{\|u_{0}\|^{n}}\frac{\operatorname{vol}_{n}(B_{2}^{n})}{n+1}
\left\|(y_{m_{i}},\psi_{m_{i}}(y_{m_{i}}))-\left(x_{0},\psi(x_{0})+\frac{\epsilon}{2}\right)\right\|
\\
&&\geq\frac{\rho^{n}}{\|u_{0}\|^{n}}\frac{\operatorname{vol}_{n}(B_{2}^{n})}{n+1}\|y_{m_{i}}-x_{0}\|.
\end{eqnarray*}
Since the sequence $\|y_{m_{i}}\|$, $i\in\mathbb N$, is unbounded we arrive at a contradiction.
Thus we have settled the case (\ref{lemma-cont1-5}).
\par
We assume now that (\ref{lemma-cont1-5}) does not hold, i.e.
we suppose that for all $\ell\in\mathbb N$, except for finitely many,
\begin{equation}\label{lemma-cont1-7}
\alpha_{m_{\ell}}(u_{0})-\langle x_{0},u_{0}\rangle
\leq\psi_{\delta}(x_{0})=\alpha_{}(u_{0})-\langle x_{0},u_{0}\rangle.
\end{equation}
In particular, for all $\ell\in\mathbb N$, except for finitely many,
\begin{equation}\label{lemma-cont1-8}
\alpha_{m_{\ell}}(u_{0})\leq\alpha(u_{0}).
\end{equation}
Let $r$ be any positive number. By assumption (\ref{lemma-cont1-99})
  there is $M_{0}$ such that for all $\ell$ with $m_{\ell}\geq M_{0}$ we have $\|y_{m_{\ell}}\|>r$.
We consider for all $\ell$ with $m_{\ell}\geq M_{0}$
\begin{equation}\label{lemma-cont1-9}
z_{m_{\ell}}=\frac{r}{\|y_{m_{\ell}}\|}y_{m_{\ell}}+\left(1-\frac{r}{\|y_{m_{\ell}}\|}\right)x_{0}.
\end{equation}
Then for all $\ell$ with $m_{\ell}\geq M_{0}$
$$
\|z_{m_{\ell}}\|\leq r+\|x_{0}\|.
$$
Therefore, by compactness, there is a subsequence $(z_{m_{\ell_{i}}})_{i\in\mathbb N}$
that converges
$$
z_{0}=\lim_{i\to\infty}z_{m_{\ell_{i}}}
$$
and
$$
\|z_{0}\|\leq r+\|x_{0}\|.
$$
For ease of notation we denote the subsequence $(z_{m_{\ell_{i}}})_{i\in\mathbb N}$
by $(z_{m_{i}})_{i\in\mathbb N}$.
There is $\rho$ with $0<\rho<\frac{\epsilon}{4}\|u_{0}\|$ such that
\begin{eqnarray}\label{lemma-cont1-61}
&&
B_{2}^{n+1}\left(\left(x_{0},\psi(x_{0})+\frac{\epsilon}{2}\right),\frac{\rho}{\|u_{0}\|}\right)
\\
&&\subseteq\operatorname{epi}\psi\cap
\{(x,s)|s\leq\alpha(u_{0})-\langle x,u_{0}\rangle\}
=\{(x,s)|\psi(x)\leq s\leq\alpha(u_{0})-\langle x,u_{0}\rangle\}.
\nonumber
\end{eqnarray}
This is shown in the same way as (\ref{lemma-cont1-6}).
Moreover, let $x_{m_{i}}$, $i\in\mathbb N$, be given by
\begin{equation}\label{lemma-cont1-10}
z_{0}=\frac{r}{\|y_{m_{i}}\|}y_{m_{i}}+\left(1-\frac{r}{\|y_{m_{i}}\|}\right)x_{m_{i}}.
\end{equation}
Then
$$
z_{0}-z_{m_{i}}=\left(1-\frac{r}{\|y_{m_{i}}\|}\right)(x_{m_{i}}-x_{0}).
$$
Since $z_{0}=\lim_{i\to\infty}z_{m_{i}}$ it follows $x_{0}=\lim_{i\to\infty}x_{m_{i}}$.
Since $x_{0}$ is in the interior of the domain of $\psi$ there is $\alpha>0$
such that $B_{2}^{n}(x_{0},\alpha)$ is a compact subset of the interior of the
domain of $\psi$.
The sequence $(\psi_{m})_{m\in\mathbb N}$ converges uniformly to $\psi$
on $B_{2}^{n}(x_{0},\alpha)$. Therefore, for every $\epsilon>0$ there is $M_{2}$
so that for all $m\geq M_{2}$ and all $x\in B_{2}^{n}(x_{0},\alpha)$
$$
|\psi(x)-\psi_{m}(x)|<\frac{\epsilon}{4}.
$$
Since $\psi$ is continuous at $x_{0}$ there is $\eta>0$ such that
for all $x\in B_{2}^{n}(x_{0},\eta)$
$$
|\psi(x_{0})-\psi(x)|<\frac{\epsilon}{4}.
$$
We may assume that $\eta<\alpha$.
Therefore, for all $x\in B_{2}^{n}(x_{0},\eta)$ and all $m\geq M_{2}$
$$
|\psi(x_{0})-\psi_{m}(x_{})|<\frac{\epsilon}{2}. 
$$
It follows that there is $M_{3}$ such that for all $i\geq M_{3}$
\begin{equation}\label{emma-cont1-102}
|\psi(x_{0})-\psi_{m_{i}}(x_{m_{i}})|<\frac{\epsilon}{2}. 
\end{equation}
By (\ref{lemma-cont1-4}) and (\ref{lemma-cont1-8})
\begin{equation}\label{lemma-cont1-100}
\alpha_{}(u_{0})-\langle u_{0},y_{m_{i}} \rangle\geq\psi_{m_{i}}(y_{m_{i}}).
\end{equation}
Moreover, by (\ref{lemma-float1-21}) and (\ref{emma-cont1-102})
$$
\alpha_{}(u_{0})-\langle u_{0},x_{0}\rangle=\psi_{\delta}(x_{0})
\geq\psi(x_{0})+\epsilon\geq\psi_{m_{i}}(x_{m_{i}})+\frac{\epsilon}{2}.
$$
There is $M_{4}$ such that for all $i$ with $m_{i}\geq M_{4}$ we have $\|u_{0}\|\|x_{0}-x_{m_{i}}\|<\frac{\epsilon}{4}$.
Therefore, for all $i$ with $m_{i}\geq M_{4}$
\begin{eqnarray}\label{lemma-cont1-103}
\alpha(u_{0})-\langle u_{0},x_{m_{i}}\rangle
&=&\alpha(u_{0})-\langle u_{0},x_{0}\rangle+\langle u_{0},x_{0}-x_{m_{i}}\rangle
\nonumber\\
&\geq&\alpha(u_{0})-\langle u_{0},x_{0}\rangle-\|u_{0}\|\|x_{0}-x_{m_{i}}\|
\geq\psi_{m_{i}}(x_{m_{i}})+\frac{\epsilon}{4}.
\end{eqnarray}
By (\ref{lemma-cont1-10}) 
$$
\alpha_{}(u_{0})-\langle z_{0},u_{0}\rangle
=
\frac{r}{\|y_{m_{i}}\|}(\alpha_{}(u_{0})-\langle u_{0},y_{m_{i}} \rangle)
+\left(1-\frac{r}{\|y_{m_{i}}\|}\right)(\alpha_{}(u_{0})-\langle u_{0},x_{m_{i}}\rangle).
$$
By (\ref{lemma-cont1-100}), (\ref{lemma-cont1-103}) and the convexity of $\psi$ there is
$M_{5}$ such that for all $i$ with $m_{i}\geq M_{5}$
$$
\alpha(u_{0})-\langle z_{0},u_{0}\rangle
\geq\frac{r}{\|y_{m}\|}\psi_{m_{i}}(y_{m_{i}})+\left(1-\frac{r}{\|y_{m_{i}}\|}\right)
\left(\psi_{m_{i}}(x_{m_{i}})+\frac{\epsilon}{4}\right)
\geq\psi_{m_{i}}(z_{0}).
$$
By this and (\ref{lemma-cont1-61})
$$
\left[(z_{0},\psi_{m_{i}}(z_{0})),B_{2}^{n+1}\left(\left(x_{0},\psi(x_{0})+\frac{\epsilon}{2}\right),\rho\right)\right]
\subseteq \operatorname{epi}\psi_{m_{i}}\cap \{(x,s)|\alpha_{}(u_{0})-\langle u_{0},z_{0}\rangle\geq s\}.
$$
This implies
\begin{eqnarray*}
&&\delta \int_{\mathbb{R} ^n}e^{-\psi_{m_{i}}} dx
=\operatorname{vol}_{n+1}(\{(x,s)|\psi_{m_{i}}(x)\leq s\leq\alpha_{}(u_{0})-\langle u_{0},z_{0}\rangle\})
\\
&&\geq\operatorname{vol}_{n+1}\left[(z_{0},\psi_{m_{i}}(z_{0})),B_{2}^{n+1}\left(\left(x_{0},\psi(x_{0})+\frac{\epsilon}{2}\right),\rho\right)\right]
\geq\frac{\rho^{n}}{\|u_{0}\|^{n}}\frac{\operatorname{vol}_{n}(B_{2}^{n})}{n+1}\|z_{0}-x_{0}\|.
\end{eqnarray*}
By (\ref{lemma-cont1-9}) we have $\|z_{0}-x_{0}\|=r$. Since $r$ was arbitrary this cannot be.
Thus we have shown that there is $R>0$ such that for all $m\in\mathbb N$ and all $x$ with $\|x\|>R$
$$
\max\{ 0, \alpha_m(u_0)  - \langle x, u_0\rangle  - \psi_m(x)\}=0.
$$
Thus we have shown part of (\ref{DCT}): The support of this function is contained in $RB_{2}^{n}$.
\par
We show now that there are constants $\gamma_{1}$ and $\gamma_{2}$ such that for all $m\in\mathbb N$
we have 
\begin{equation}\label{lemma-cont1-83}
\gamma_{1}\leq\alpha_{m}(u_{0})\leq \gamma_{2}.
\end{equation} 
We show the left side inequality first. Assume it does not hold.
By Lemma \ref{UnifEst1} there is $c_1 \in \mathbb{R}$ such that for all $m \in \mathbb{N}$ 
and all $x \in \mathbb R^{n}$,
\begin{equation}\label{beschrankt}
c_1 \leq \psi_m(x).
\end{equation}
Therefore, for all $x\in B_{2}^{n}(R)$ and all $m\in\mathbb N$
\begin{eqnarray*}
&&\max\{ 0, \alpha_m(u_0)  - \langle x, u_0\rangle  - \psi_m(x)\}
\leq\max\{ 0, \alpha_m(u_0)  +\|x\|\|u_{0}\|  - c_{1}\}
\\
&&\leq\max\{0,\alpha_m(u_0)  +R\|u_{0}\|  - c_{1}\}
\end{eqnarray*}
Since we assume that the left side inequality does not hold there is $m$ such that for all $x\in\mathbb R^{n}$
$$
\max\{ 0, \alpha_m(u_0)  - \langle x, u_0\rangle  - \psi_m(x)\}=0.
$$
This implies $ \int_{\mathbb{R} ^n}e^{-\psi_m} dx=0 $ and this contradicts (\ref{lemma-cont1-81}). 
Now we show the right side inequality of (\ref{lemma-cont1-83}).
Assume it does not hold. Consider $x_{0}\in\operatorname{int}(\operatorname{dom}(\psi))$.
There are $\rho>0$ and $s_{0}$ such that there is $M_{5}$ so that for all $m\geq M_{5}$
$$
B_{2}^{n+1}((x_{0},s_{0}),\rho)\subseteq \operatorname{epi}\psi_{m}.
$$
For sufficiently big  $m$ we have
$$
B_{2}^{n+1}((x_{0},s_{0}),\rho)\subseteq \operatorname{epi}\psi_{m}\cap 
\{(x,s)|s\leq \alpha_{m}(u_{0})-\langle u_{0},x\rangle\}.
$$
Therefore, for sufficiently big $\alpha_{m}(u_{0})$
$$
\left[(x_{0},\alpha_{m}(u_{0})-\langle x_{0},u_{0}\rangle),B_{2}^{n+1}((x_{0},s_{0}),\rho)\right]
\subseteq \operatorname{epi}\psi_{m}\cap 
\{(x,s)|s\leq \alpha_{m}(u_{0})-\langle u_{0},x\rangle\}.
$$
This implies
\begin{eqnarray*}
\rho^{n}\operatorname{vol}_{n}(B_{2}^{n})|\alpha_{m}(u_{0})-\langle x_{0},u_{0}\rangle-s_{0}|
&\leq&\operatorname{epi}\psi_{m}\cap 
\{(x,s)|s\leq \alpha_{m}(u_{0})-\langle u_{0},x\rangle\}
\\
&=&\delta \int_{\mathbb{R} ^n}e^{-\psi_m} dx.
\end{eqnarray*}
Since the sequence $(\alpha_{m}(u_{0}))_{m\in\mathbb N}$ is not bounded from above
this cannot be true. We have shown (\ref{DCT}).
\par
We show that $\lim_{m\to\infty}\alpha_{m}(u_{0})$ exists.
Suppose that
there are two subsequences $(\alpha_{m_{j}}(u_{0}))_{j=1}^{\infty}$
and $(\alpha_{\ell_{j}}(u_{0}))_{j=1}^{\infty}$ with
$$
\lim_{j\to\infty}\alpha_{m_{j}}(u_{0})=a< b=\lim_{j\to\infty}\alpha_{\ell_{j}}(u_{0}).
$$
We apply the Dominated Convergence Theorem to the sequence 
$\max\{ 0, \alpha_{m_{j}}(u_0)  - \langle x, u_0\rangle  - \psi_{m_{j}}(x)\}$, $i\in\mathbb N$.
We have $\lim_{m\to\infty}\psi_{m}(x)=\psi(x)$ a.e. and by (\ref{DCT}) a dominating function.
Therefore
\begin{eqnarray*}
\lim_{j\to\infty} \int_{\mathbb{R}^n}   \max\{ 0, \alpha_{m_{j}}(u_0)  - \langle x, u_0\rangle  - \psi_{m_{j}}(x)\} dx
= \int_{\mathbb{R}^n}   \max\{ 0, a  - \langle x, u_0\rangle  - \psi(x)\} dx
\end{eqnarray*}
and 
\begin{eqnarray*}
\lim_{j\to\infty} \int_{\mathbb{R}^n}   \max\{ 0, \alpha_{\ell_{j}}(u_0)  - \langle x, u_0\rangle  - \psi_{\ell_{j}}(x)\} dx
= \int_{\mathbb{R}^n}   \max\{ 0, b  - \langle x, u_0\rangle  - \psi(x)\} dx.
\end{eqnarray*}
By (\ref{lemma-cont1-81}) 
\begin{eqnarray*}
\delta \int_{\mathbb{R} ^n}e^{-\psi} dx
&=&\int_{\mathbb{R}^n}   \max\{ 0, a  - \langle x, u_0\rangle  - \psi(x)\} dx
\\
&=&\int_{a\geq \langle x, u_0\rangle  - \psi(x)}    a  - \langle x, u_0\rangle  - \psi(x) dx
\\
&<&\int_{a\geq \langle x, u_0\rangle  - \psi(x)}    b  - \langle x, u_0\rangle  - \psi(x) dx
\leq\delta \int_{\mathbb{R} ^n}e^{-\psi} dx
\end{eqnarray*}
This is a contradiction.
Therefore $a=b$ and the sequence $(\alpha_{m}(u_0) )_{m=1}^{\infty}$ converges. 
By (\ref{DCT}) we can apply the Dominated Convergence Theorem
$$
\delta \int_{\mathbb{R} ^n}e^{-\psi} dx
= \int_{\mathbb{R}^n}   \max\left\{ 0, \lim_{m\to\infty}\alpha_{m}(u_0)- \langle x, u_0\rangle- \psi(x)\right\} dx.
$$
It follows that $\lim_{m\to\infty}\alpha_{m}(u_0)=\alpha(u_{0})$ and we have shown (\ref{lemma-cont1-92})
and consequently (\ref{lemma-cont1-1}). 
\par
Now we show 
that for all $x_0$ in the interior of the domain of $\psi$, 
$$
\limsup_{m \to \infty} (\psi_m)_{\delta} (x_0) \leq \psi_\delta (x_0).
$$
If
\begin{equation}\label{lemma-cont1-105}
\limsup_{m\to\infty}(\psi_{m})_{\delta}(x_{0})\leq\psi_{}(x_{0})
\end{equation}
then
$$
\limsup_{m\to\infty}(\psi_{m})_{\delta}(x_{0})\leq\psi(x_{0})\leq\psi_{\delta}(x_{0}).
$$
Therefore we may assume that (\ref{lemma-cont1-105}) does not hold, i.e. there is $\epsilon>0$ 
such that
\begin{equation}\label{lemma-cont1-106}
\limsup_{m\to\infty}(\psi_{m})_{\delta}(x_{0})\geq\epsilon+\psi_{}(x_{0}).
\end{equation}
By Lemma \ref{lemma-float2}, there are $u_m$ and $\alpha_m(u_m)$ such that 
$(\psi_{m})_{\delta} (x_0) = \alpha_m(u_m) - \langle u_m, x_0 \rangle $.
We show that the sequences $\alpha_{m}(u_{m})$, $m\in\mathbb N$, and $\|u_{m}\|$, $m\in\mathbb N$,
are bounded. As a first step we show that 
$\alpha_{m}(u_{m})-\langle x_{0},u_{m}\rangle$, $m\in\mathbb N$, is a bounded sequence. Suppose this
is not true. Since $x_{0}$ is an interior point of the domain of $\psi$ there is $\rho>0$ such that
$B_{2}^{n}(x_{0},\rho)$ is compact and is contained in the interior of the domain of $\psi$. Then the
sequence $\psi_{m}$, $m\in\mathbb N$, converges uniformly on $B_{2}^{n}(x_{0},\rho)$ to $\psi$.
Therefore there is $M_{0}$ such that for all $m\geq M_{0}$ and all $x\in B_{2}^{n}(x_{0},\rho)$
$$
|\psi(x)-\psi_{m}(x)|<\frac{\epsilon}{4}
$$
and by continuity of $\psi$ in $x_{0}$
$$
|\psi(x_{0})-\psi(x)|<\frac{\epsilon}{4}.
$$
Therefore for all $m\geq M_{0}$ and all $x\in B_{2}^{n}(x_{0},\rho)$
$$
|\psi(x_{0})-\psi_{m}(x)|<\frac{\epsilon}{2}.
$$
Therefore, for all $m$ with $m\geq M_{0}$
\begin{eqnarray}\label{lemma-cont1-111}
&&\operatorname{epi}\psi_{m}\cap 
\{(x,s)|s\leq \alpha_{m}(u_{m})-\langle u_{m},x\rangle\}
\nonumber\\
&&=\{(x,s)|\psi_{m}(x)\leq s\leq \alpha_{m}(u_{0})-\langle u_{m},x\rangle\}
\\
&&\supseteq\{(x,s)|\|x-x_{0}\|\leq\rho\hskip 2mm\mbox{and}\hskip 2mm
\psi(x_{0})+\epsilon\leq s\leq\alpha_{m}(u_{m})-\langle x,u_{m}\rangle\}.
\nonumber
\end{eqnarray}
We obtain for all $m$ such that $m\geq M_{0}$ and such that 
$\epsilon+\psi(x_{0})\leq \alpha_{m}(u_{m})-\langle x_{0},u_{m}\rangle$
\begin{eqnarray*}
&&\operatorname{epi}\psi_{m}\cap 
\{(x,s)|s\leq \alpha_{m}(u_{m})-\langle u_{m},x\rangle\}
\\
&&\supseteq\left[(x_{0},\alpha_{m}(u_{m})-\langle x_{0},u_{m}\rangle),\{(x,\psi(x_{0})+\epsilon)|\hskip 2mm\|x-x_{0}\|\leq\rho,
\langle x-x_{0},u_{m}\rangle\leq0\}\right].
\end{eqnarray*}
Indeed, this follows by adding the inequalities 
$\epsilon+\psi(x_{0})\leq \alpha_{m}(u_{m})-\langle x_{0},u_{m}\rangle$ and $\langle x-x_{0},u_{m}\rangle\leq0$.
The set
$$     
\{(x,\psi(x_{0})+\epsilon)|\hskip 2mm\|x-x_{0}\|\leq\rho\hskip 2mm\mbox{and}\hskip 2mm
\langle x-x_{0},u_{m}\rangle\leq0\}
$$
is half of an $n$-dimensional Euclidean ball.
Therefore for all $m$ such that $m\geq M_{0}$ and such that 
$\epsilon+\psi(x_{0})\leq \alpha_{m}(u_{m})-\langle x_{0},u_{m}\rangle$
\begin{eqnarray*}
&&\delta \int_{\mathbb{R} ^n}e^{-\psi_{m}} dx
=\operatorname{vol}_{n+1}(\operatorname{epi}\psi_{m}\cap 
\{(x,s)|s\leq \alpha_{m}(u_{m})-\langle u_{m},x\rangle\})
\\
&&\geq\operatorname{vol}_{n+1}(\{(x,s)|\|x-x_{0}\|\leq\rho\hskip 2mm\mbox{and}\hskip 2mm\psi(x_{0})+\epsilon\leq s\leq\alpha_{m}(u_{m})-\langle x,u_{m}\rangle\})
\\
&&
\geq\rho^{n}\frac{\operatorname{vol}_{n}(B_{2}^{n})}{2(n+1)}|\alpha_{m}(u_{m})-\langle x_{0},u_{m}\rangle-\psi_{m}(x_{0})|.
\end{eqnarray*}
Therefore the sequence $\alpha_{m}(u_{m})-\langle x_{0},u_{m}\rangle$, $m\in\mathbb N$,
is bounded. 
\par
We show that the sequence $\|u_{m_{}}\|$, $m\in\mathbb N$, is bounded. By
(\ref{lemma-cont1-111}) there is $M_{0}$ such that for all $m\geq M_{0}$
\begin{eqnarray*}
&&\operatorname{epi}\psi_{m}\cap 
\{(x,s)|s\leq \alpha_{m}(u_{m})-\langle u_{m},x\rangle\}
\\
&&\supseteq\{(x,s)|\|x-x_{0}\|\leq\rho\hskip 2mm\mbox{and}\hskip 2mm
\psi(x_{0})+\epsilon\leq s\leq\alpha_{m}(u_{m})-\langle x,u_{m}\rangle\}.
\end{eqnarray*}
We consider the point
$$
x=x_{0}-\frac{\rho}{\|u_{m}\|}u_{m}.
$$
We have
$$
\alpha_{m}(u_{m})
-\left\langle x_{0}-\frac{\rho}{\|u_{m}\|}u_{m},u_{m}\right\rangle
=\alpha_{m}(u_{m})-\langle x_{0},u_{m}\rangle+\rho\|u_{m}\|.
$$
Therefore, for all $m$ with $m\geq M_{0}$ and with 
$\alpha_{m}(u_{m})-\langle x_{0},u_{m}\rangle+\rho\|u_{m}\|\geq \psi(x_{0})+\epsilon$
\begin{eqnarray*}
&&\operatorname{epi}\psi_{m}\cap\{(x,s)|\alpha_{m_{j}}(u_{m_{}})-\langle x,u_{m_{}}\rangle\geq s\}
\\
&&\supseteq\left[\left(x_{0}-\frac{\rho}{\|u_{m}\|}u_{m},\alpha_{m}(u_{m})-\langle x_{0},u_{m}\rangle+\rho\|u_{m}\|\right),\right.
\\&&
\hskip 20mm\big\{(x,\psi(x_{0})+\epsilon)|\hskip 2mm\|x-x_{0}\|\leq\rho\hskip 2mm\mbox{and}\hskip 2mm
\langle x-x_{0},u_{m}\rangle\leq0\big\}\bigg].
\end{eqnarray*}
It follows for all $m$ with $m\geq M_{0}$ and with 
$\alpha_{m}(u_{m})-\langle x_{0},u_{m}\rangle+\rho\|u_{m}\|\geq \psi(x_{0})+\epsilon$
\begin{eqnarray*}
&&\delta \int_{\mathbb{R} ^n}e^{-\psi_{m}} dx
=
\operatorname{vol}_{n+1}(\operatorname{epi}\psi_{m}\cap\{(x,s)|\alpha_{m}(u_{m_{}})-\langle x,u_{m_{}}\rangle\geq s\})
\\
&&\geq\frac{\rho^{n}}{2(n+1)}\operatorname{vol}_{n}(B_{2}^{n})
|\alpha_{m}(u_{m})-\langle x_{0},u_{m}\rangle+\rho\|u_{m}\|-\psi(x_{0})-\epsilon|
\geq\frac{\rho^{n+1}\|u_{m}\|}{2(n+1)}\operatorname{vol}_{n}(B_{2}^{n}).
\end{eqnarray*}
Therefore the sequence $\|u_{m}\|$, $m\in\mathbb N$, is bounded. 
\par
Therefore, by passing to a 
subsequence we may assume
$$
\lim_{j\to\infty}\alpha_{m_{j}}(u_{m_{j}})-\langle x_{0},u_{m_{j}}\rangle
=\limsup_{m\to\infty}\alpha_{m}(u_{m})-\langle x_{0},u_{m}\rangle
$$
and
$$
\lim_{j \to \infty} u_{m_j}= v_0,  \hskip 10mm\mbox{and}\hskip 10mm
 \lim_{j \to \infty} \alpha _{m_j}(u_{m_j}) = \beta.
$$
Then
$$
\delta \int_{\mathbb{R} ^n}e^{-\psi_{m_j}} dx
= \int_{\mathbb{R}^n} \max\{ 0, \alpha _{m_j}(u_{m_j}) - \langle x, u_{m_j}\rangle  - \psi_{m_j}(x)\}dx.
$$
Since the sequence $f_{m}$, $m\in\mathbb N$, converges in $L_{1}$ to $f$  
we have $\lim_{j \to\infty}\, \int_{\mathbb{R} ^n}e^{-\psi_{m_j}} 
=  \int_{\mathbb{R} ^n}e^{-\psi}$. 
Thus by Fatou's lemma,
\begin{eqnarray*}
\delta \int_{\mathbb{R} ^n}e^{-\psi} dx&=& \liminf_j  \,  \int_{\mathbb{R}^n} \max\{ 0, \alpha _{m_j}(u_{m_j}) - \langle x, u_{m_j}\rangle  - \psi_{m_j}(x)\}dx\\
&\geq&  \int_{\mathbb{R}^n}  \liminf_j  \, \left( \max\{ 0, \alpha _{m_j}(u_{m_j}) - \langle x, u_{m_j}\rangle  - \psi_{m_j}(x)\} \right)dx \\
&=& \int_{\mathbb{R}^n}   \max\{ 0, \beta - \langle x, v_0\rangle  - \psi(x)\}dx.
\end{eqnarray*}
This means
 \begin{eqnarray*}
 \psi_\delta(x_0) &\geq& \beta - \langle x_0, v_0 \rangle
= \lim_{j\to\infty}\alpha_{m_{j}}(u_{m_{j}})-\langle u_{m_{j},x_{0}}\rangle
\\
&=&\limsup_{m\to\infty}\alpha_{m}(u_{m})-\langle x_{0},u_{m}\rangle
=\limsup_{m\to\infty}(\psi_{m})_{\delta}(x_{0}).
 \end{eqnarray*}
Hence
\begin{eqnarray*}
\psi_\delta(x_0)   \geq  \beta - \langle x_0, v_0 \rangle  = \lim_{j \to \infty} \left( \alpha _{m_j}(u_{m_j})  - \langle x_0, u_{m_j} \rangle \right) 
= \limsup_{j \to \infty} \left(\psi_{m_j}\right)_\delta (x_0).
\end{eqnarray*}
\end{proof}

\section{The L\"{o}wner  function of a log-concave function}

The L\"{o}wner  function of a log-concave function was introduced in \cite{LiSchuettWerner2019}.  It was also shown there that this is an extension of the notion of
L\"{o}wner ellipsoid for convex bodies.  
The L\"{o}wner function is defined as follows.
\vskip 2mm

\begin{definition} \label{lownerfcn} \cite{LiSchuettWerner2019}
Let \(f: \R^n\to \R^+\), \(f(x)=e^{-\psi(x)}\) be a nondegenerate,  integrable   log-concave function.
Then the L\"{o}wner  function $ L(f)$ of $f$ is defined as
\begin{equation}\label{defminimization-1}
 L(f)(x)=e^{-\|A_0 x\|+t_0},  
 \end{equation}
where  $(A_0,t_0) = (T_0+a_0,t_0)$ is the  solution  to the minimization problem
\begin{equation}\label{defminimization-2}
\min_{(A,t)}\int_{\R^n}e^{-\|Ax\|+t}dx  =   n! \,  \vol( B^n_2) \,  \min_{(A,t)} \frac{e^t}{|\det T|}
\end{equation}
subject to 
\begin{equation} \label{defconstraint}
\|Ax\|-t\le \psi (x),    \  \   \text{ for all  } x \in \mathbb{R}^n, 
\end{equation}
where  the minimum is taken over all nonsingular affine maps \( A=T+a\in \mathcal{A}\) and all \(t\in \R\). 
\end{definition}

\noindent
It was shown in   \cite{LiSchuettWerner2019} that the minimization problem 
(\ref{defminimization-2}) subject to the constraint condition  (\ref{defconstraint})
has  a solution  \((A_0,t_0)\) where the number $t_0$ is unique and the affine map $A_0$ 
 is unique  up to left orthogonal transformations. Thus the L\"{o}wner  function is well defined.

\par

 A different definition of L\"{o}wner  function was put forward in \cite{Alonso-Gutierrez2020}. 
 However, this L\"{o}wner  function is not an affine covariant mapping.
It is not translation invariant.

\begin{theorem} \label{T-Low}
Let $f = \exp(-\psi)$ be a function in $LC$. 
Then the  L\"owner operator 
$
L:   LC \to LC, 
$ 
mapping $f$ to its L\"owner function $L(f)$ (\ref{defminimization-1}),
is an affine covariant mapping.
\end{theorem}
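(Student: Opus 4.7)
Two things need to be shown: the identity $L(Bf)=BL(f)$ for every nonsingular affine $B\in\mathcal{A}$, and continuity of $L$ with respect to the $L_{1}$-norm on $LC$. For affine covariance, I would exploit a change of variables in the constrained minimization problem (\ref{defminimization-2})--(\ref{defconstraint}). Writing $B=T+a$ and letting $(A_{0},t_{0})$ realize $L(f)$, the composition $A_{0}B\in\mathcal{A}$ satisfies $\|(A_{0}B)x\|-t_{0}=\|A_{0}(Bx)\|-t_{0}\le\psi(Bx)$, so $(A_{0}B,t_{0})$ is feasible for $Bf$, and the substitution $y=Bx$ yields
\begin{equation*}
\int_{\R^{n}}e^{-\|A_{0}Bx\|+t_{0}}\,dx=|\det T|^{-1}\int_{\R^{n}}e^{-\|A_{0}y\|+t_{0}}\,dy.
\end{equation*}
Applying the same substitution in reverse to any feasible pair $(B',s)$ for $Bf$ shows $(B'B^{-1},s)$ is feasible for $f$ with integral scaled by $|\det T|$, so $(A_{0}B,t_{0})$ attains the minimum for $Bf$. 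The uniqueness of $t_{0}$ and of the affine part up to left orthogonal factors (stated after Definition \ref{lownerfcn}) then yields $L(Bf)(x)=e^{-\|A_{0}Bx\|+t_{0}}=L(f)(Bx)=(BL(f))(x)$.

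For continuity, assume $f_{m}\to f$ in $L_{1}$ and let $(A_{m},t_{m})$, $A_{m}=T_{m}+a_{m}$, realize $L(f_{m})$. The strategy is: (1) establish a priori boundedness of $(T_{m},a_{m},t_{m})$ after fixing the orthogonal ambiguity by the polar decomposition $T_{m}=U_{m}S_{m}$ with $S_{m}$ symmetric positive definite; (2) extract a convergent subsequence $(S_{m_{j}},a_{m_{j}},t_{m_{j}})\to(S^{\ast},a^{\ast},t^{\ast})$ with $S^{\ast}$ invertible; (3) show $(A^{\ast},t^{\ast})$, $A^{\ast}=S^{\ast}+a^{\ast}$, is a minimizer for $f$; (4) conclude from the uniqueness part of Definition \ref{lownerfcn} that the whole sequence converges and $L(f_{m})\to L(f)$ pointwise; (5) upgrade to $L_{1}$ convergence via Lemma \ref{l1conviffconvofinteg} once $\int L(f_{m})\to\int L(f)$ is established. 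For the a priori bounds in (1), the constraint forces $L(f_{m})\ge f_{m}$ pointwise, hence $e^{t_{m}}=\|L(f_{m})\|_{\infty}\ge\|f_{m}\|_{\infty}$, bounded below since $\|f_{m}\|_{\infty}\to\|f\|_{\infty}$ (as in the proof of Lemma \ref{UnifEst1}, combining Lemmas \ref{L1convtoptwconv}, \ref{propoflegendretransf}, and \ref{convergenceinfnorm}); Lemma \ref{UnifEst1} itself provides $\rho>0$ and $\tau$ such that $(\rho^{-1}\mathrm{Id},\tau)$ is feasible for all large $m$, yielding an upper bound on $\int L(f_{m})=n!\vol(B_{2}^{n})e^{t_{m}}/|\det T_{m}|$ independent of $m$, and combined with the lower bound on $e^{t_{m}}$ this keeps $|\det T_{m}|$ bounded below. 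Testing the constraint on a closed ball inside $\intt(\supp f)$ on which $\psi_{m}$ is uniformly bounded (Lemma \ref{L1convtoptwconv}(ii)) bounds $\|T_{m}\|_{\mathrm{Op}}$ and $\|a_{m}\|$ linearly in $t_{m}$; the crude inequality $|\det T_{m}|\le\|T_{m}\|_{\mathrm{Op}}^{n}$ together with the bound on $e^{t_{m}}/|\det T_{m}|$ then gives $e^{t_{m}}\le C(\alpha+\beta t_{m})^{n}$, forcing $t_{m}$, and hence $\|T_{m}\|_{\mathrm{Op}}$ and $\|a_{m}\|$, bounded above as well.

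For (3), feasibility of $(A^{\ast},t^{\ast})$ for $f$ is immediate on $\R^{n}\setminus\partial\overline{\supp(f)}$ by pointwise convergence of $\psi_{m}$ to $\psi$ (Lemma \ref{L1convtoptwconv}(i)) and extends to all of $\R^{n}$ by lower semicontinuity of $\psi$. Continuity of the objective $e^{t}/|\det T|$ on the nonsingular locus yields $\int e^{-\|A^{\ast}x\|+t^{\ast}}dx=\lim\int L(f_{m})$, so $\int L(f)\le\lim\int L(f_{m})$; for the matching upper bound I would test $(A_{0},t_{0}+\varepsilon_{m})$ against $f_{m}$, choosing $\varepsilon_{m}\downarrow0$ so that feasibility holds by combining uniform convergence of $\psi_{m}$ on compact subsets of $\intt(\supp f)$ (Lemma \ref{L1convtoptwconv}(ii)) with the tail bound of Lemma \ref{UnifEst1}. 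The main obstacle is step (1): keeping $|\det T_{m}|$ bounded below and $\|T_{m}\|_{\mathrm{Op}}$ bounded above \emph{simultaneously}, since the Löwner minimizer is sensitive to the global shape of $\psi_{m}$ and the uniform estimates on the peak and the tails must be combined carefully. A lesser care point is the orthogonal ambiguity, handled once and for all by the polar decomposition, since $\|U A_{m}x\|=\|A_{m}x\|$ and $|\det(UA_{m})|=|\det A_{m}|$ for any orthogonal $U$ leave both objective and constraint invariant.
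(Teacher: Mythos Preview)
Your affine covariance argument and your compactness scheme (steps (1), (2), the feasibility half of (3), (4), (5)) are essentially the paper's approach: the paper packages step (1) as Lemma \ref{UniformBounded1} with the same ingredients you list, and the liminf direction $\int L(f)\le\liminf_m\int L(f_m)$ follows as you say once the subsequential limit $(A^\ast,t^\ast)$ is shown feasible for $f$.

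The gap is in the \emph{limsup} direction, where you propose to test $(A_0,t_0+\varepsilon_m)$ against $f_m$. Feasibility means $\|A_0x\|-t_0-\varepsilon_m\le\psi_m(x)$ for \emph{every} $x\in\R^n$, and your justification ``uniform convergence on compacta plus Lemma \ref{UnifEst1}'' does not deliver this. Lemma \ref{UnifEst1} gives only $\psi_m(x)\ge\|x\|/\rho-t$ with a slope $1/\rho$ determined by the size of a fixed sublevel set of $\psi$; there is no reason for $1/\rho$ to dominate $\|T_0\|_{\mathrm{Op}}$ (think of a highly anisotropic $\psi$, e.g.\ $\psi(x_1,x_2)=|x_1|+M|x_2|$ with $M$ large, where $\|T_0\|_{\mathrm{Op}}\sim M$ but $1/\rho\sim 1$). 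Thus for large $\|x\|$ the tail bound need not imply $\psi_m(x)\ge\|A_0x\|-t_0-\varepsilon_m$, and uniform convergence on compacta cannot close the gap because no single compact set covers the tails (and if $\operatorname{supp}(f)\ne\R^n$ you cannot even reach the region outside $\operatorname{supp}(f)$ at all).

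The paper handles exactly this point with Lemmas \ref{SublevelLoewUnif} and \ref{LemConvL12}: using Hausdorff convergence of sublevel sets (Lemma \ref{ptwcontoflevelset}) to control $\psi_m$ on a fixed sublevel set of the cone $\|A_0\cdot\|-t_0$, and then a convex cone argument from the apex $(-b_0,\psi(-b_0)+\varepsilon)$ to push the estimate out to infinity. The conclusion is the slightly weaker but sufficient global inequality $\psi_m(x)\ge(1-\varepsilon)\|T_0(x+b_0)\|-t_0-\varepsilon$, which makes $((1-\varepsilon)T_0,b_0,t_0+\varepsilon)$ feasible for $f_m$ and yields $e^{t_m}/|\det T_m|\le e^{t_0+\varepsilon}/((1-\varepsilon)^n|\det T_0|)$. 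You have correctly located the overall architecture, but the main obstacle is not step (1) as you suggest; it is producing a feasible competitor for $f_m$ whose value approaches $\int L(f)$, and that requires the sublevel-set/cone argument rather than the crude isotropic tail bound of Lemma \ref{UnifEst1}.
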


\noindent
The next corollary follows immediately from the theorem, together  with Remark \ref{remark2}.
\begin{corollary} \label{C-Low}
Let $f = \exp(-\psi)$ be a function in $LC$. 
Then  for all $\lambda \in \mathbb{R}$, 
$$ 
    g(Lf), \hskip 2mm   s(Lf),  \hskip 2mm  \lambda g(Lf)+ (1-\lambda)   s(Lf)
$$
are affine contravariant points.
\end{corollary}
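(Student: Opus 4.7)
The plan is to assemble the corollary directly from three ingredients already on the table: Theorem \ref{T-Low}, which puts the L\"owner operator $L$ into the set $\mathfrak{A}$ of affine covariant mappings; Proposition \ref{example2}, which places the centroid $g$ and the Santal\'o point $s$ into the set $\mathfrak{P}$ of affine contravariant points; and the two closure properties recorded in Remark \ref{remark2}(i), namely that $p \circ P \in \mathfrak{P}$ whenever $p \in \mathfrak{P}$ and $P \in \mathfrak{A}$, and that affine combinations of elements of $\mathfrak{P}$ stay in $\mathfrak{P}$. No genuinely new continuity estimate is needed, because both closure properties in Remark \ref{remark2}(i) already incorporate continuity.

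First I would apply Remark \ref{remark2}(i) with $P = L$ and $p \in \{g, s\}$ to conclude that the compositions $f \mapsto g(L(f))$ and $f \mapsto s(L(f))$ are affine contravariant points. Concretely, for a nonsingular affine map $A$,
\begin{equation*}
g(L(Af)) = g(A\, L(f)) = A^{-1}(g(L(f))),
\end{equation*}
using $L \in \mathfrak{A}$ in the first equality and $g \in \mathfrak{P}$ in the second; the same computation works for $s$. Continuity of $g \circ L$ and $s \circ L$ as maps $LC \to \mathbb{R}^n$ follows from composing the $L_1$-to-$L_1$ continuity of $L$ (built into $L \in \mathfrak{A}$) with the $L_1$-to-$\mathbb{R}^n$ continuity of $g$ and $s$ (established in Proposition \ref{example2}).

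Second, I would apply the affine-combination half of Remark \ref{remark2}(i) to the two points $p = g \circ L$ and $q = s \circ L$, with scalar $\lambda \in \mathbb{R}$, to obtain $\lambda\, g(L(\cdot)) + (1-\lambda)\, s(L(\cdot)) \in \mathfrak{P}$. This step is purely formal: linearity of the action $A^{-1}(v) = T^{-1}v - T^{-1}a$ on vectors makes the contravariance relation (\ref{def1}) pass through any affine combination, and continuity is preserved under finite linear combinations of continuous maps. I do not anticipate any real obstacle, since every nontrivial piece --- the covariance of $L$, the contravariance of $g$ and $s$, and the algebraic structure of $\mathfrak{P}$ under composition with $\mathfrak{A}$ and under affine combinations --- is already available in the excerpt; the corollary is essentially a bookkeeping consequence.
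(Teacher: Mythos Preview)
Your proposal is correct and follows essentially the same route as the paper, which simply states that the corollary ``follows immediately from the theorem, together with Remark \ref{remark2}.'' One minor quibble: you call the action $A^{-1}(v) = T^{-1}v - T^{-1}a$ ``linear,'' but it is affine; the contravariance relation still passes through $\lambda g(Lf) + (1-\lambda) s(Lf)$ precisely because the coefficients sum to $1$, so affine maps preserve the combination.
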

\noindent
We remark that  the centroid $ g(Lf)$ of the L\"owner function of \(f\) was called the  L\"owner point of $l(f)$ of  \(f\) in  \cite{LiSchuettWerner2019}.
\vskip 2mm

\begin{lemma}\label{UniformBounded1}
Let $(f_{m})_{m=1}^{\infty}$ be a sequence in $LC$  that converges in $L_1$ 
to the log-concave function $f \in LC$. Let $(T_{m},b_{m},t_{m})$, $m\in\mathbb N$,
be the minimizers for $f_{m}$, $m\in\mathbb N$, and let $(T_{},b_{},t_{})$ be the
minimizer for $f$. Then the sequences $(\|T_{m}\|_{\operatorname{Op}})_{m=1}^{\infty}$,
$(\|b_{m}\|)_{m=1}^{\infty}$, and $(t_{m})_{m=1}^{\infty}$ are bounded.
\end{lemma}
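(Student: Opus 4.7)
The plan is to establish the bounds on $(t_m)$, $(\|b_m\|)$, and $(\|T_m\|_{\operatorname{Op}})$ in that order, by exploiting two complementary consequences of Definition \ref{lownerfcn}: the minimality in (\ref{defminimization-2}) gives an upper bound on $\int L(f_m)$, while the pointwise constraint (\ref{defconstraint}) says $L(f_m)\ge f_m$ and so forces the super-level sets of $L(f_m)$ to contain those of $f_m$.

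First I would construct a uniformly good competitor. By Lemma \ref{UnifEst1} there are $\rho>0$ and $t\in\mathbb R$ such that $f_m(x)\le \exp(-\|x\|/\rho+t)$ for every $m$ and $x$; equivalently, the triple $(\rho^{-1}I,0,t)$ is admissible in (\ref{defminimization-2})--(\ref{defconstraint}) for every $f_m$. Minimality of $(T_m,b_m,t_m)$ thus yields
\[
\frac{e^{t_m}}{|\det T_m|}\ \le\ \rho^{n} e^{t}\ =:\ C_1.
\]
On the other hand, pick $s$ with $0<s<\|f\|_\infty$. Since $L(f_m)\ge f_m$, we have $G_{f_m}(s)\subseteq G_{L(f_m)}(s)$, and the change of variables $y=T_m x+b_m$ gives
\[
\operatorname{vol}_n\bigl(G_{L(f_m)}(s)\bigr)\ =\ \frac{\operatorname{vol}_n(B_2^n)\,(t_m-\log s)^{n}}{|\det T_m|}
\]
whenever $t_m>\log s$ (which must hold for $m$ large, since otherwise $G_{L(f_m)}(s)=\emptyset\not\supseteq G_{f_m}(s)$). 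By Lemma \ref{ptwcontoflevelset} together with Lemma \ref{L1convtoptwconv}(i), the sets $G_{f_m}(s)$ converge in the Hausdorff distance to $G_f(s)$, so $\operatorname{vol}_n(G_{f_m}(s))\ge c_2>0$ for $m$ large. Chaining the two estimates gives
\[
e^{t_m}\ \le\ C_1\,|\det T_m|\ \le\ \frac{C_1\,\operatorname{vol}_n(B_2^n)}{c_2}\,(t_m-\log s)^{n},
\]
and because exponential growth beats polynomial growth, this forces $(t_m)$ to be bounded above, whence $|\det T_m|$ is bounded above as well.

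To derive the remaining bounds I would use (\ref{defconstraint}) at specific points. Since $0\in\operatorname{int}(\operatorname{supp}f)$, Lemma \ref{L1convtoptwconv} gives $\psi_m(0)\to\psi(0)$, and evaluating (\ref{defconstraint}) at $x=0$ yields $\|b_m\|-t_m\le\psi_m(0)$. This both bounds $t_m$ from below (via $t_m\ge -\psi_m(0)$) and, combined with the upper bound on $t_m$, bounds $\|b_m\|$. Next, choose $\rho_0>0$ small enough that $B_2^n(\rho_0)\subset\operatorname{int}(\operatorname{supp}f)$. By Lemma \ref{L1convtoptwconv}(ii) the sequence $f_m$ converges uniformly on $B_2^n(\rho_0)$ to $f$; since $f$ is bounded away from $0$ there, $\psi_m\to\psi$ uniformly on this ball (using Lemma \ref{ConStetSup} for continuity of $\psi$), so $M:=\sup_{m,\,\|x\|\le\rho_0}\psi_m(x)<\infty$. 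Applying (\ref{defconstraint}) at $x=\rho_0 y$ for $\|y\|=1$ and using the reverse triangle inequality,
\[
\rho_0\|T_m y\|\ \le\ \|T_m(\rho_0 y)+b_m\|+\|b_m\|\ \le\ t_m+\psi_m(\rho_0 y)+\|b_m\|\ \le\ t_m+M+\|b_m\|,
\]
and taking the supremum over $\|y\|=1$ delivers a uniform bound on $\|T_m\|_{\operatorname{Op}}$.

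The main obstacle is the coupling of $(t_m)$ and $|\det T_m|$ in the first step: neither the upper bound on the integral nor the lower bound on the super-level set volume alone controls these quantities, and the key observation is that the two bounds together compare an exponential in $t_m$ against a polynomial in $t_m$ through the common factor $|\det T_m|$, which is what simultaneously pins down $t_m$ and $|\det T_m|$ and thereby unlocks the rest.
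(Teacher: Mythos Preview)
Your proof is correct, and the final bound on $\|T_m\|_{\operatorname{Op}}$ matches the paper's. Your route to the upper bound on $t_m$ and to the bound on $\|b_m\|$ is genuinely different and cleaner. For $t_m$: the paper scales the pointwise constraint $\|T_m(x+b_m)\|\le t_m+\psi(0)+\tfrac34$ (valid on a small ball) to deduce $L(f_m)\ge \exp\!\bigl(\tfrac34 t_m-\text{const}\bigr)$ on a translated ball, and then compares $\int L(f_m)$ against the competitor from Lemma~\ref{UnifEst1}; you instead compare level-set volumes, obtaining $e^{t_m}\le C_1|\det T_m|\le \text{const}\cdot(t_m-\log s)^n$ and invoking exponential-versus-polynomial growth, which makes the coupling of $t_m$ and $|\det T_m|$ transparent and avoids the scaling trick. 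For $\|b_m\|$: the paper lower-bounds $\int L(f_m)$ by the volume of a cone of height proportional to $\|b_m\|$ and again compares with the competitor, whereas you simply evaluate the constraint at $x=0$. One caveat: your $b_m$ is the translation $A_m(0)$, while the paper parametrizes $A_mx=T_m(x+b_m)$, so its $b_m$ equals $T_m^{-1}A_m(0)$. Your bounds still recover the paper's version, since step~1 gives $|\det T_m|\ge e^{t_m}/C_1$ bounded away from zero, and together with the bound on $\|T_m\|_{\operatorname{Op}}$ this controls $\|T_m^{-1}\|_{\operatorname{Op}}$; you should make this remark explicit if you want to match the lemma exactly as stated.
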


\begin{proof}[Proof of Lemma \ref{UniformBounded1}]
By assumption,  $0 \in \text{int}(\supp(f))$. Thus  $\psi(0)<\infty$.
We consider the convex set
$$
\{x|\psi(x)\leq \psi(0)+2\}.
$$
Since $e^{-\psi}$ is integrable,  $\{x|\psi(x)\leq \psi(0)+2\}$ is bounded.
By Lemma \ref{UnifEst1} there is $t\in\mathbb R$, $\rho >0$  and $m_{0}\in\mathbb N$ such that
for all $m\geq m_{0}$ and all $x\in\mathbb R^{n}$
\begin{equation}\label{TLow-2}
f_{m}(x)\leq\exp\left(-\frac{\|x\|_{}}{\rho}+t\right).
\end{equation}
\par
\noindent
We first show that  the sequence $(t_{m})_{m=1}^{\infty}$ is bounded. Since $0$ is an interior point
of the support of $f$ there are $\alpha_{0}$ and $\delta>0$ such that $B_{2}^{n}(0,\delta)$
is contained in the interior of the support of $f$ and such that
for all
$x\in B_{2}^{n}(0,\delta)$
$$
\psi(x)\leq\left(\psi(0)+\frac{1}{2}\right)\1_{B_{2}^{n}(0,\delta)}.
$$
By Lemma \ref{L1convtoptwconv} the sequence $(\psi_{m})_{m=1}^{\infty}$
converges uniformly to $\psi$ on all compact subsets of the interior of the domain of $\psi$.
Therefore we can choose $m$ so big that $|\psi_{m}(x)-\psi(x)|\leq\frac{1}{4}$
for all $x\in B_{2}^{n}(0,\delta)$.
We have for all $x\in B_{2}^{n}(0,\delta)$
\begin{equation}\label{TLow-31}
-t_m \leq \|T_{m}(x+b_{m})\|_{}-t_{m}\leq \psi_{m}(x)\leq\psi(0)+\tfrac{3}{4}.
\end{equation}
From this inequality it follows immediately that the sequence $(t_{m})_{m=1}^{\infty}$
is bounded from below. Moreover,
it follows that for all $y\in B_{2}^{n}(0,\frac{\delta}{4})$ we have
$$
\left\|T_{m}\left(y+\frac{b_{m}}{4}\right)\right\|_{}-\frac{t_{m}}{4}
\leq\frac{1}{4}\left(\psi(0)+\tfrac{3}{4}\right).
$$
Therefore for all $x\in B_{2}^{n}(-\frac{3}{4}b_{m},\frac{\delta}{4})$
$$
\|T_{m}(x+b_{m})\|_{}-t_{m}\leq -\frac{3}{4}t_{m}+\frac{\psi(0)}{4}+\frac{3}{16}
$$
and
$$
\exp\left(\frac{3}{4}t_{m}-\frac{\psi(0)}{4}-\frac{3}{16}\right)\leq\exp(-\|T_{m}(x+b_{m})\|_{}+t_{m}).
$$
It follows
\begin{eqnarray*}
&&\int_{\mathbb R^{n}}\exp(-\|T_{m}(x+b_{m})\|_{}+t_{m})dx
\geq\int_{B_{2}^{n}(-\frac{3}{4}b_{m},\frac{\delta}{4})}\exp\left(\frac{3}{4}t_{m}-\frac{\psi(0)}{4}-\frac{3}{16}\right)dx
\\
&&
\geq\exp\left(\frac{3}{4}t_{m}-\frac{\psi(0)}{4}-\frac{3}{16}\right)
\operatorname{vol}_{n}\left(B_{2}^{n}\left(-\frac{3}{4}b_{m},\frac{\delta}{4}\right)\right)
\\
&&=\exp\left(\frac{3}{4}t_{m}-\frac{\psi(0)}{4}-\frac{3}{16}\right)\left(\frac{\delta}{4}\right)^{n}
\operatorname{vol}_{n}(B_{2}^{n}).
\end{eqnarray*}
Let $I_n$ be the $n\times n$ identity matrix. (\ref{TLow-2}) implies that $(\frac{1}{\rho}I_{n}, 0, t)$ satisfies (\ref{defconstraint})
for $f_{m}=e^{-\psi_{m}}$, $m\in\mathbb N$.
Since $(T_{m}, b_m, t_m)$ is the minimizer for $f_{m}=e^{-\psi_{m}}$
\begin{equation}\label{TLow-016}
\int_{\mathbb R^{n}}\exp(-\|T_{m}(x+b_{m})\|_{}+t_{m})dx
\leq\int_{\mathbb R^{n}}\exp\left(-\frac{1}{\rho}\|x\|_{}+t_{}\right)dx.
\end{equation}
Therefore
$$
\exp\left(\frac{3}{4}t_{m}-\frac{\psi(0)}{4}-\frac{3}{16}\right)\left(\frac{\delta}{4}\right)^{n}
\operatorname{vol}_{n}(B_{2}^{n})
\leq\int_{\mathbb R^{n}}\exp\left(-\frac{1}{\rho}\|x\|_{}+t_{}\right)dx.
$$
It follows that the sequence $(t_{m})_{m=1}^{\infty}$ is bounded from above.
Since we know already that the sequence $(t_{m})_{m=1}^{\infty}$ is bounded from
below it is bounded.
\par
Now we show that the sequence $(b_{m})_{m=1}^{\infty}$ is bounded. By (\ref{TLow-31}) we have
for all $x\in B_{2}^{n}(0,\delta)$
\begin{equation}\label{TLow-12}
\|T_{m}(x+b_{m})\|_{}-t_{m}\leq \psi_{m}(x)\leq \psi(0)+\tfrac{3}{4}.
\end{equation}
Since the sequence $(t_{m})_{m=1}^{\infty}$ is bounded from above there is a constant $c>0$ such that
for all $x\in B_{2}^{n}(0,\delta)$ and all $m\in\mathbb N$
\begin{equation}\label{TLow-9}
\|T_{m}(x+b_{m})\|_{}\leq c.
\end{equation}
Therefore, for all $\lambda\in[0,1]$, for all $x\in B_{2}^{n}(0,\delta)$ and all $m\in\mathbb N$
$$
\|T_{m}(\lambda(x+b_{m}))\|_{}\leq c.
$$
It follows that for all $m\in\mathbb N$ and all $z\in \text{co} [0,B_{2}^{n}(b_{m},\delta)]$
$$
\|T_{m}(z)\|_{}\leq c.
$$
By this and (\ref{TLow-016}) there is a constant $c^{\prime}>0$ such that for all $m\in\mathbb N$
\begin{eqnarray*}
&&\int_{\mathbb R^{n}}\exp\left(-\frac{1}{\rho}\|x\|_{}+t_{}\right)dx
\geq\int_{\mathbb R^{n}}\exp(-\|T_{m}(x+b_{m})\|_{}+ t_{m})dx
\\
&&=e^{-t_{m}}\int_{\mathbb R^{n}}\exp(-\|T_{m}(y)\|_{})dy
\geq e^{-t_{m}}\int_{\text{co}[0,B_{2}^{n}(b_{m},\delta)]}\exp(-c)dx
\\
&&
\geq\exp(-c+c^{\prime})\operatorname{vol}_{n}(\text{co}[0,B_{2}^{n}(b_{m},\delta)]).
\end{eqnarray*}
We have
$$
\operatorname{vol}_{n}(\text{co}[0,B_{2}^{n}(b_{m},\delta)])
\geq\frac{1}{n}\|b_{m}\|_{} \, \delta^{n-1}
\operatorname{vol}_{n-1}(B_{2}^{n-1})
$$
and consequently
$$
\int_{\mathbb R^{n}}\exp\left(-\frac{1}{\rho}\|x\|_{}+t_{}\right)dx
\geq\frac{1}{n}\|b_{m}\|_{} \, \delta^{n-1}
\operatorname{vol}_{n-1}(B_{2}^{n-1}).
$$
Therefore the sequence $(\|b_{m}\|)_{m=1}^{\infty}$ is bounded.
\par
Now we show that the sequence $(\|T_{m}\|_{\operatorname{Op}})_{m=1}^{\infty}$ is bounded. 
By (\ref{TLow-9}) there is $c>0$ such that for all $x\in B_{2}^{n}(\delta)$
$$
\|T_{m}(x+b_{m})\|\leq c.
$$
In particular for $x=0$, 
$$
\|T_{m}(b_{m})\|\leq c.
$$
By triangle inequality, for all $x\in B_{2}^{n}(\delta)$
$$
\|T_{m}(x)\|\leq c+\|T_{m}(b_{m})\|\leq 2c.
$$
Therefore 
\begin{equation}\label{TLow-10}
\|T_{m}\|_{\operatorname{Op}}\leq\frac{2c}{\delta}.
\end{equation}
Altogether we have shown that that the sequences $(t_{m})_{m=1}^{\infty}$, $(\|b_{m}\|)_{m=1}^{\infty}$ 
and $(\|T_{m}\|_{\operatorname{Op}})_{m=1}^{\infty}$ are bounded.
\end{proof}
\vskip 3mm
\noindent
\begin{lemma}\label{LemConvL11}
Let $f \in LC$  with minimizer $(T_{},b_{},t_{})$.
Let $C_{k}$, $k\in\mathbb N$, be compact subsets of $\operatorname{int}(\operatorname{supp}(f))$
such that $C_{k}\subseteq C_{k+1}$ for $k\in\mathbb N$ and
$$
\operatorname{int}(\operatorname{supp}(f))=\bigcup_{k\in\mathbb N}C_{k}.
$$
For alle $k\in\mathbb N$ the functions $f\cdot\1_{C_{k}}$ are in $LC$. 
Let $(T_{k},b_{k},t_{k})$ be the minimizer for $f\cdot\1_{C_{k}}=e^{-\psi}\1_{C_{k}}$.
The sequence $(L(f\cdot\1_{C_{k}}))_{k=1}^{\infty}$ converges in $L_{1}$ to $L(f)$.
\end{lemma}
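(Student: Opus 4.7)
The plan is to combine the $L_{1}$-convergence $f\cdot\1_{C_{k}}\to f$ with the uniform bounds furnished by Lemma \ref{UniformBounded1}, and then identify every subsequential limit of the minimizers $(T_{k},b_{k},t_{k})$ with the minimizer for $f$ via the uniqueness statement after Definition \ref{lownerfcn}.

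First I would check that $f\cdot\1_{C_{k}}\to f$ in $L_{1}$: the functions $f\cdot\1_{C_{k}}$ increase pointwise to $f$ on $\operatorname{int}(\operatorname{supp}(f))$, and the boundary of this convex set has Lebesgue measure zero, so monotone convergence yields $\|f\cdot\1_{C_{k}}\|_{1}\to\|f\|_{1}$ and hence $L_{1}$-convergence. Lemma \ref{UniformBounded1} then guarantees that $(\|T_{k}\|_{\operatorname{Op}})$, $(\|b_{k}\|)$ and $(t_{k})$ are bounded. Pass to any convergent subsequence $(T_{k_{j}},b_{k_{j}},t_{k_{j}})\to(T^{*},b^{*},t^{*})$. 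The decisive step is to verify that $(T^{*},b^{*},t^{*})$ is feasible for $f$ and minimal for $f$; uniqueness then forces $e^{-\|T^{*}(x+b^{*})\|+t^{*}}=L(f)(x)$. For feasibility, note that if $x\in\operatorname{int}(\operatorname{supp}(f))$ then $x\in C_{k_{j}}$ for all large $j$, so $\|T_{k_{j}}(x+b_{k_{j}})\|-t_{k_{j}}\leq\psi(x)$, which in the limit gives $\|T^{*}(x+b^{*})\|-t^{*}\leq\psi(x)$. The remaining constraint points either satisfy $\psi(x)=\infty$ (where the constraint is vacuous) or lie on $\partial\operatorname{supp}(f)$ with $\psi(x)<\infty$; the latter are handled by approaching $x$ along a segment from the interior, using continuity of $\|T^{*}(\,\cdot\,+b^{*})\|$ and the fact that a convex, lower semicontinuous $\psi$ is continuous along line segments inside its domain up to the boundary.

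For optimality, since $(T,b,t)$ satisfies the constraint for $f$ it also satisfies the weaker constraint for $f\cdot\1_{C_{k_{j}}}$, so $\int e^{-\|T_{k_{j}}(x+b_{k_{j}})\|+t_{k_{j}}}dx\leq\int e^{-\|T(x+b)\|+t}dx$. Fatou's lemma, combined with the feasibility of $(T^{*},b^{*},t^{*})$ for $f$, forces equality in the limit, so $(T^{*},b^{*},t^{*})$ is a minimizer for $f$; the same chain of inequalities also rules out a singular $T^{*}$, since otherwise the limiting integral would diverge. Since every convergent subsequence of $(T_{k},b_{k},t_{k})$ produces the same function $L(f)$, the pointwise numerical limit $L(f\cdot\1_{C_{k}})(x)\to L(f)(x)$ holds for every $x$. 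The $L_{1}$-norms also converge: feasibility of $(T,b,t)$ for $f\cdot\1_{C_{k}}$ gives $\|L(f\cdot\1_{C_{k}})\|_{1}\leq\|L(f)\|_{1}$, and Fatou's lemma on the subsequence shows $\liminf\|L(f\cdot\1_{C_{k}})\|_{1}\geq\|L(f)\|_{1}$. Lemma \ref{l1conviffconvofinteg} then upgrades pointwise a.e. convergence plus norm convergence to convergence in $L_{1}$.

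The main obstacle I anticipate is the boundary feasibility check for $(T^{*},b^{*},t^{*})$: one must show that the constraint passes to points $x\in\partial\operatorname{supp}(f)$ with $\psi(x)<\infty$, where the $C_{k_{j}}$ do not immediately cover $x$. The line-segment continuity of proper convex lsc functions (combined with convexity forcing $\limsup_{\lambda\to 0^{+}}\psi(x_{\lambda})\leq\psi(x)$ and lsc the matching liminf bound) is exactly what is needed; everything else is a routine combination of compactness, Fatou, and the uniqueness of the L\"owner minimizer.
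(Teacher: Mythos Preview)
Your proposal is correct and follows essentially the same route as the paper: bound the minimizers via Lemma \ref{UniformBounded1}, show every subsequential limit $(T^{*},b^{*},t^{*})$ is feasible for $f$ and attains the minimal value (using that $(T,b,t)$ is feasible for each truncation), then invoke uniqueness and upgrade pointwise to $L_{1}$ convergence. The only cosmetic differences are that the paper uses the explicit formula $\int e^{-\|Ax\|+t}dx=n!\operatorname{vol}(B_{2}^{n})e^{t}/|\det T|$ in place of your Fatou step, and cites Lemma \ref{L1convtoptwconv} rather than Lemma \ref{l1conviffconvofinteg} at the end; your extra care with the boundary feasibility and with justifying $f\cdot\1_{C_{k}}\to f$ in $L_{1}$ fills in details the paper leaves implicit.
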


\begin{proof}
By Lemma \ref{UniformBounded1} the sequences $(T_{k})_{k=1}^{\infty}$, $(b_{k})_{k=1}^{\infty}$
and $(t_{k})_{k=1}^{\infty}$ are bounded.
We show that 
$$
\lim_{j\to\infty}(T_{k_{}},b_{k_{}},t_{k_{}})=(T_{},b_{},t_{}).
$$
Suppose this is not the case. 
 Then there are two convergent subsequences
that converge to  different limits. We show that all convergent subsequences 
$(T_{k_{j}},b_{k_{j}},t_{k_{j}})$
converge to the same limit, the 
minimizer $(T_{},b_{},t_{})$ of $f=e^{-\psi}$.
\par
\noindent
We have
$$
f(x)\1_{C_{k}}(x)\leq f(x)\leq\exp(- \|T_{}(x+b_{})\|_{}+t_{}).
$$
Therefore, for all $k\in\mathbb N$
$$
\frac{e^{t_{k}}}{|\det T_{k}|}\leq\frac{e^{t_{}}}{|\det T_{}|}.
$$
This implies
$$
\lim_{j\to\infty}\frac{e^{t_{k_{j}}}}{|\det T_{k_{j}}|}\leq\frac{e^{t_{}}}{|\det T_{}|}.
$$
On the other hand,
$$
f(x)\leq\lim_{j\to\infty}\exp( -\|T_{k_{j}}(x+b_{k_{j}})\|+t_{k_{j}}).
$$
This implies
\begin{equation}\label{TLow-7}
\lim_{j\to\infty}\frac{e^{t_{k_{j}}}}{|\det T_{k_{j}}|}=\frac{e^{t_{}}}{|\det T_{}|}.
\end{equation}
By the uniqueness of the minimizer of $f$ we get
$$
\lim_{j\to\infty}(T_{k_{j}},b_{k_{j}},t_{k_{j}})=(T_{},b_{},t_{}).
$$
This implies that $L(f \cdot \1_{C_{k_j}}(x)  )= e^{-\|T_{k_{j}}(x+b_{k_{j}})\|+t_{k_{j}}} \rightarrow L(f)(x) = e^{-\|T(x+b)\|+t}$ pointwise and hence in $L_1$ by Lemma \ref{L1convtoptwconv}.
\end{proof}
\vskip 3mm
\noindent

\begin{lemma}\label{SublevelLoewUnif}
Let \( \psi_m: \R^n\to \R\cup\{\infty\}, m\in \mathbb{N},\) be a sequence of convex functions that converges pointwise to a convex function \(\psi\). Moreover, let \(A:\R^n\to \R^n\) be an affine map and \(t\in \R\) such that for all \(x\in\R^n\)
\[ \psi(x)\ge \|Ax\|-t.\]
Then for every \(\eps>0\) and every \(h\in\R\)   , \( h>\min_{x\in\mathbb R^{n}}\psi(x)+\eps\) there is $m_0\in \mathbb{N}$ such that for all $m$ with $m\ge m_0$ and all \(x\) with \( \|Ax\|-t\le h\)
\begin{equation}\label{SublevelLoewUnif-1}
\psi_m(x)\ge \|Ax\|-t-\eps.  
\end{equation}
\end{lemma}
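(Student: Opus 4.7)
The plan is to argue by contradiction. Suppose (\ref{SublevelLoewUnif-1}) fails: for some $\eps > 0$ and some $h > \min\psi + \eps$ there exist a subsequence $(m_k)_{k\in\mathbb N}$ and points $x_k$ with $c_k := \|Ax_k\| - t \le h$ and $\psi_{m_k}(x_k) < c_k - \eps$. Since $A$ is nonsingular, the set $K := \{x : \|Ax\| - t \le h\}$ is compact. Passing to a further subsequence I may assume $x_k \to x_0 \in K$ and $c_k \to c := \|Ax_0\|-t$, so $\limsup_k \psi_{m_k}(x_k) \le c - \eps$. On the other hand the standing hypothesis applied at $x_0$ gives $\psi(x_0) \ge c$. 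The whole argument then reduces to proving the ``liminf inequality along the diagonal sequence''
\begin{equation*}
\psi(x_0) \;\le\; \liminf_{k\to\infty} \psi_{m_k}(x_k),
\end{equation*}
which combined with the two bounds above forces the contradiction $c \le c - \eps$.

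For the liminf inequality I exploit $h > \min\psi + \eps$ to pick $y_0 \in \mathbb R^{n}$ with $\psi(y_0) < h - \eps$. Because $\{\psi < h - \eps\}$ is an open convex neighbourhood of $y_0$, one has $y_0 \in \operatorname{int}(\dom\psi)$, and by Theorem~10.8 of \cite{Rockafellarbook1970} together with Lemma~\ref{ConStetSup} the convergence $\psi_{m_k} \to \psi$ is uniform on every compact subset of $\operatorname{int}(\dom\psi)$; in particular $\psi_{m_k}(y_0) \to \psi(y_0)$. Setting $w_k^\lambda := (1-\lambda) y_0 + \lambda x_k$ and $w^\lambda := (1-\lambda) y_0 + \lambda x_0$, convexity of $\psi_{m_k}$ together with uniform convergence on a compact neighbourhood of $w^\lambda$ (valid whenever $w^\lambda \in \operatorname{int}(\dom\psi)$) yields, after passing to the limit $k \to \infty$,
\begin{equation*}
\psi(w^\lambda) \;\le\; (1-\lambda)\psi(y_0) + \lambda(c - \eps),
\end{equation*}
and hence $\|A w^\lambda\| - t \le (1-\lambda)\psi(y_0) + \lambda(c - \eps)$ by the standing hypothesis $\psi \ge \|A\cdot\| - t$.

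When $x_0 \in \overline{\dom\psi}$ this last inequality is available for every $\lambda \in (0, 1)$, because the open segment from an interior point of a convex set to a point in its closure lies in the interior; letting $\lambda \to 1^-$, the left-hand side tends to $\|Ax_0\| - t = c$ while the right-hand side tends to $c - \eps$, producing the desired contradiction. The main obstacle is the residual case $x_0 \notin \overline{\dom\psi}$, in which $\psi(x_0) = \infty$ but $w^\lambda$ leaves $\operatorname{int}(\dom\psi)$ at some parameter $\lambda^* < 1$, blocking the passage $\lambda \to 1$. To close this case I would invoke the separating hyperplane theorem to isolate an open ball $B(x_0, r)$ disjoint from $\overline{\dom\psi}$, on which $\psi \equiv \infty$ and hence $\psi_m \to \infty$ pointwise; the convexity of $\psi_{m_k}$ along the segment $[y_0, x_k]$ (whose endpoints both satisfy $\psi_{m_k} \le h - \eps$ for large $k$) confines the whole segment to $\{\psi_{m_k} \le h - \eps\}$, and a second application of Theorem~10.8 of \cite{Rockafellarbook1970} on a one-dimensional slice approaching $\partial\dom\psi$ from within $\operatorname{int}(\dom\psi)$ then contradicts the pointwise divergence of $\psi_m$ on $B(x_0, r)$.
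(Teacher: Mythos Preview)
Your approach differs from the paper's: the paper argues directly by splitting the set $\{x:\|Ax\|-t\le h\}$ into two regions---points far from $E_\psi(h-\eps)=\{x:\psi(x)\le h-\eps\}$ and points in a small neighbourhood of it---and treats each region using, respectively, Hausdorff convergence of sublevel sets (Lemma~\ref{ptwcontoflevelset}) and uniform convergence on compact subsets of the support (Lemma~\ref{L1convtoptwconv}(ii)). Your contradiction argument via the diagonal liminf is a genuinely different route, and the ``main case'' $x_0\in\overline{\dom\psi}$ is handled cleanly: the convexity inequality along $[y_0,x_k]$, passed to the limit at a fixed $\lambda<1$ and then sent to $\lambda\to 1^-$, gives the contradiction $c\le c-\eps$ exactly as you say.

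The gap is in the residual case $x_0\notin\overline{\dom\psi}$. Your sketch there is not a proof: having $[y_0,x_k]\subset\{\psi_{m_k}\le h-\eps\}$ and $\psi_m\to\infty$ \emph{pointwise} on a ball $B(x_0,r)$ is not a contradiction, because the points $x_k$ (and the segments) are moving with $k$. Rockafellar's Theorem~10.8 gives uniform convergence only on compact subsets of an open set where the limiting function is \emph{finite}, so it says nothing on $B(x_0,r)$, and your ``one-dimensional slice'' remark does not explain how to bridge this. Also, a minor point: the set $\{\psi<h-\eps\}$ need not be open (take $\psi=I_{[0,1]}$), so the justification that $y_0\in\operatorname{int}(\dom\psi)$ should instead go via convexity: pick any $z\in\operatorname{int}(\dom\psi)$ and move along $[z,\operatorname{argmin}\psi]$ toward the minimizer.

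The cleanest repair of the residual case is to observe that it cannot occur: since $\psi_{m_k}(x_k)\le h-\eps$, one has $x_k\in E_{\psi_{m_k}}(h-\eps)$, and by Lemma~\ref{ptwcontoflevelset} these sets converge in Hausdorff metric to the compact set $E_\psi(h-\eps)$, whence $x_0=\lim x_k\in E_\psi(h-\eps)\subset\dom\psi$. But note that this is precisely one of the two tools underpinning the paper's own proof, so once you invoke it your argument and the paper's are no longer so far apart.
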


The minimum $\min_{x\in\mathbb R^{n}}\psi(x)$ exists since $e^{-\psi}$ is integrable.

\begin{proof}
	Let $ \epsilon>0$ and $h\in \R$ with $h>\min_x\psi(x)+\epsilon$. 
	Let \(x\) be such that 
	\[\|Ax\|-t\le h.\]
	For this fixed \(\epsilon\), there is positive \(\eta>0\) such that 
	\[ \{y: \|Ay\|-t\le h\}\supseteq  \{y: \|Ay\|-t\le h-\eps\}+\eta B_2^n.\]
	By Lemma 6 we have that 
	\[ E_{\psi_m}(s)\to E_\psi(s)\]
	in Hausdorff metric for all \( s>\min_x\psi\). 
	Since \( h-\eps > \min_x \psi(x)\)
	there exists $m_1=m_1(h, \eta) \in \mathbb N $ such that for all \(m>m_1\),
	\begin{eqnarray*}
		\{ y: \psi_m(y)\le h-\eps\}
		&\subset& \{ y: \psi(y)\le h-\eps\}+\eta B_2^n\\
		&\subset& \{ y: \|Ay\|-t\le h-\eps \}+\eta B_2^n
		\subset \{ y: \|Ay\|-t\le h\}.
	\end{eqnarray*}
	Let \(x\) be such that \( \|Ax\|-t\le h\). If also \( x\) is such that \[ x\notin   \{ x: \psi(x)\le h-\eps\}+\eta B_2^n,\] we then have that for all \(m>m_1\)
	\[ x\in \left( \{ y: \psi(y)\le h-\eps\}+\eta B_2^n \right)^c\subset \{ y: \psi_m(y)\le h-\eps\}^c.\]
	That is, \( \psi_m(x)>h-\eps \) for all \(m>m_1\). Hence 
	\[ \psi_m(x)>h-\eps >\|Ax\|-t-\epsilon\]  for all \(m>m_1\).
	\par 
	Otherwise assume that \(x\) is such that  \[ x\in   \{ y: \psi(y)\le h-\eps\}+\eta B_2^n.\] 
	Since $\psi$ is lower semi continuous and since $e^{-\psi}$ is integrable the set \( \{x: \psi(x)\le h-\epsilon\}\) is a compact subset of \( \text{dom}(\psi)\) and so is the set \(     \{ x: \psi(x)\le h-\eps\}+\eta B_2^n\). Thus by Lemma 10(ii) we have that \( \{\psi_m\}_{m=1}^{\infty}\) converges uniformly to $\psi$ on \(     \{ x: \psi(x)\le h-\eps\}+\eta B_2^n\). Hence for the same \(\eps\) there exists \(m_2\) such that whenever \(m>m_2\), 
	\[  \psi_m(x)> \psi(x)-\epsilon\]
	for all \(   x\in  \{ x: \psi(x)\le h-\eps\}+\eta B_2^n\). Since \( \psi(x)\ge \|Ax\|-t\) for all \(x\in \R\), we have that  on \(     \{ x: \psi(x)\le h-\eps\}+\eta B_2^n\), whenever $m>m_2$, 
	\[  \psi_m(x)> \psi(x)-\epsilon\ge \|Ax\|-t-\epsilon.\]
	Finally, let \(m_0=\max\{m_1,m_2\}\), we have  (\ref{SublevelLoewUnif-1}). 
\end{proof}

\begin{lemma}\label{LemConvL12} Let \( \psi_m: \R^n\to \R\cup\{\infty\}, m\in \mathbb{N},\) be a sequence of convex functions that converges pointwise to a convex function \(\psi\). Suppose that for all \(x\in\R^n\)
   \begin{equation}\label{LemConvL12-1}
    \psi(x)\ge \|T(x+b)\|-t.
	    \end{equation}
   Then 
for every \(\eps>0\)  , there is $m_0\in \mathbb{N}$ such that for all $m$ with $m\ge m_0$ and all \(x\in \R^n\) 
\begin{equation}\label{LemConvL12-5}
\psi_m(x)\ge(1-\eps)\|T(x+b)\|-t-\eps. 
\end{equation}
\end{lemma}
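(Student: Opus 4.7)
The plan is to split $\R^n$ into a bounded ``inner'' region, where Lemma~\ref{SublevelLoewUnif} directly delivers the bound $\psi_m(x)\ge\|T(x+b)\|-t-\eps$, and the complementary unbounded ``outer'' region, where we propagate that bound by the convexity of $\psi_m$ at the cost of the factor $1-\eps$ in front of $\|T(x+b)\|$. The key geometric fact that makes the propagation clean is that $\phi(x):=\|T(x+b)\|-t$ scales linearly along rays emanating from the tip $x^*=-b$: if $y=\lambda x+(1-\lambda)(-b)$ then $T(y+b)=\lambda T(x+b)$, so $\phi(y)=\lambda\|T(x+b)\|-t$.

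First, fix $\eps>0$ and a threshold $h$, to be pinned down in the final step. Applied to the affine map $x\mapsto T(x+b)$ with tolerance $\eps$, Lemma~\ref{SublevelLoewUnif} produces $m_1\in\mathbb N$ such that for all $m\ge m_1$ and all $x$ with $\|T(x+b)\|-t\le h$, one has $\psi_m(x)\ge\|T(x+b)\|-t-\eps\ge(1-\eps)\|T(x+b)\|-t-\eps$. This already settles \eqref{LemConvL12-5} on the inner region.

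For the outer region, anchor the convexity estimate at $x^*=-b$, assuming $\psi(-b)<\infty$; otherwise pick any nearby interior point of $\dom\psi$ and absorb the small distortion of $T(x^*+b)$ into a larger $h$. Pointwise convergence of the convex functions $\psi_m$ to $\psi$ upgrades to uniform convergence on compact subsets of $\intt(\dom\psi)$ (a standard fact for convex functions, also invoked in the proof of Lemma~\ref{L1convtoptwconv}(ii)); in particular $\psi_m(-b)\le\psi(-b)+\eps$ for all $m\ge m_2$. For $x$ with $r:=\|T(x+b)\|>h+t$, set $\lambda=(h+t)/r\in(0,1)$ and $y=\lambda x+(1-\lambda)(-b)$, so $\|T(y+b)\|-t=h$; the inner bound applied at $y$ gives $\psi_m(y)\ge\lambda r-t-\eps$. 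Convexity of $\psi_m$ then yields
$$\psi_m(x)\ge\frac1\lambda\bigl(\psi_m(y)-(1-\lambda)\psi_m(-b)\bigr)\ge r-\frac{t+\eps}{\lambda}-\frac{(1-\lambda)(\psi(-b)+\eps)}{\lambda}.$$

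Finally, \eqref{LemConvL12-5} reduces to the algebraic condition $\eps r\lambda\ge(1-\lambda)(t+\psi(-b)+2\eps)$. Using $r\lambda=h+t$ and $1-\lambda\le 1$, it suffices to choose $h$ so that $\eps(h+t)\ge t+\psi(-b)+2\eps$, together with $h>\min\psi+\eps$ required by Lemma~\ref{SublevelLoewUnif}. Setting $m_0=\max(m_1,m_2)$ then yields the claim. The main obstacle is controlling the $1/\lambda$ factor produced by the convexity step; as $r$ grows the weight $\lambda$ shrinks and the error $(1-\lambda)(\psi(-b)+\eps)/\lambda$ cannot be absorbed if the leading coefficient of $r$ on the right of \eqref{LemConvL12-5} is kept equal to $1$, which is precisely why it must be weakened to $1-\eps$ and why $h$ must be taken comparable to $\eps^{-1}(\psi(-b)+t)$.
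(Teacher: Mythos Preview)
Your proof is correct and follows the same strategy as the paper's: apply Lemma~\ref{SublevelLoewUnif} on the sublevel set $\{\|T(x+b)\|-t\le h\}$ and then propagate outward by convexity of $\psi_m$ along rays from $-b$, choosing $h$ of order $\eps^{-1}(\psi(-b)+t)$. The paper phrases the outer step geometrically (a cone with apex $(-b,\psi(-b)+\eps)$ containing $\operatorname{epi}\psi_m$ above height $h$), whereas you write out the convexity inequality directly; your version is in fact slightly leaner since it avoids the paper's second appeal to Hausdorff convergence of sublevel sets.
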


\begin{proof}
	By Lemma \ref{SublevelLoewUnif}, for all $\epsilon>0$ and all $h$ with \( h>\min_{x\in\mathbb R^{n}}\psi(x)+\eps\) there is $m_0$ such that for all $m\ge m_0$ and all $x\in \R^n$ with $\|T(x+b)\|-t\le h$
	\begin{equation}\label{LemConvL12-2}
	\psi_m(x)\ge \|T(x+b)\|-t-\eps.
	\end{equation}
	We choose $h$ so that 
	\begin{equation}\label{LemConvL12-3}
	 h\geq\max\left\{1+|t|,\frac{\psi(-b)}{\eps}+1-t-\eps\right\}.
	 \end{equation}
	We consider now those \(x\in \R^n\) with \( \|T(x+b)\|-t-\eps=h.\) The point \((-b, \psi(-b)+\eps )\) is an element of all the epigraphs of \( \psi_m\) for \(m\ge m_0\) by the pointwise convergence of \( \{\psi_{m}\}_{m=1}^{\infty}\) to \(\psi\).
	\par 
	Also, we claim that there is $m_{1}$ such that for all $m\ge m_{1}$ all points
	 $(x,h)$ with  \( \|T(x+b)\|-t-\eps=h\) are not an elements of the epigraphs of \( \psi_m\). 
	By Lemma 6 we have that 
	\[\{ x: \psi_m(x)\le h\}\to \{ x: \psi(x)\le h\}\]
	in Hausdorff metric as $m\to \infty$. Thus for every $\eta>0$ there exists \(m_1\) such that 
	when \( m\ge m_1\) 
	$$
	\{ x: \psi_m(x)\le h\}
	\subseteq \{ x: \psi(x)\le h\}+\eta B_2^n
\subseteq\{ x: \|T(x+b)\|-t\le h\}+\eta B_2^n.
$$
We can choose $\eta>0$ small enough so that
$$
\{ x: \psi_m(x)\le h\}
\subset \{ x:\|T(x+b)\|-t\le h+\tfrac{\eps}{2} \}.
	$$
		  Therefore, for all $(x,h)$ with  \( \|T(x+b)\|-t-\eps=h\) and all $m\geq m_{1}$
		  we have $\psi_{m}(x)>h$. 
		  Hence $(x,h)$ with  \( \|T(x+b)\|-t-\eps=h\) is not an element of the epigraphs of \( \psi_m\) for \(m\ge m_1\).
	\par 
	By convexity, no element of a ray emanating from \((-b, \psi(-b)+\eps )\) through $x$ beyond $x$ is an element of any of the epigraphs of \( \psi_m\) for \(m>\max\{m_0,m_1\}\). Let \(C(\psi)\) be the cone with apex \((-b, \psi(-b)+\eps )\) and generated by the set of all $x$ with  \( \|T(x+b)\|-t-\eps=h\). Then 
	\[ \text{epi} (\psi_m)\cap \{ x\in \R^{n+1}: x_{n+1}\ge h\}\subset C(\psi).\]
	The boundary of the cone $C(\psi)$ is the graph of the map 
	\[ \left(1-\frac{\psi(-b)+\eps)}{h+t+\eps}\right)\|T(x+b)\|+\psi(-b)+\eps. \]
	Indeed, this expression takes the value $\psi(-b)+\eps$ for $x=-b$ and for all $x$ with $ \|T(x+b)\|-t-\eps=h$ we get
	$$
	\left(1-\frac{\psi(-b)+\eps)}{h+t+\eps}\right)\|T(x+b)\|+\psi(-b)+\eps
	=h+t+\eps.
	$$
	Therefore, for all $x$ with $\|T(x+b)\|-t-\eps\geq h$ and all $m$ with $m\geq m_{1}$
	$$
	\psi_{m}(x)\geq\left(1-\frac{\psi(-b)+\eps)}{h+t+\eps}\right)\|T(x+b)\|+\psi(-b)+\eps.
	$$
	By (\ref{LemConvL12-1}) we have $\psi(-b)\geq -t$. Therefore for all $x$ with $\|T(x+b)\|-t-\eps\geq h$ and all $m$ with $m\geq m_{1}$
		\[ \psi_{m}(x) \ge 	  \left(1-\frac{\psi(-b)+\eps)}{h+t+\eps}\right)\|T(x+b)\|-t \]
	and by (\ref{LemConvL12-2}) for all $m\ge m_0$ and all $x\in \R^n$ with $\|T(x+b)\|-t\le h$
	$$
	 \psi_m(x)\ge \|T(x+b)\|-t-\eps.
	 $$ 
	Altogether we get for all $x\in \R^n$ and all $ m>\max\{m_0,m_1\}$ 
	\[ \psi_m(x)\ge  \left(1-\frac{\psi(-b)+\eps)}{h+t+\eps}\right)\|T(x+b)\|-t-\eps. \]
	If $\psi(-b)\leq0$ then
	$$
	\psi_m(x)\ge  \left(1-\frac{\eps}{h+t+\eps}\right)\|T(x+b)\|-t-\eps.
	$$
	By (\ref{LemConvL12-3}) we have $h\geq1+|t|$ and we get (\ref{LemConvL12-5}). If $\psi(-b)\geq0$ we use $h\geq \frac{\psi(-b)}{\eps}+1-t-\eps$ 
	and obtain (\ref{LemConvL12-5}).
\end{proof}

\begin{proof}[Proof of Theorem \ref{T-Low}]
By definition
$ (Lf)(x)=e^{-\|A_0 x\|+t_0}$, 
where	
\begin{eqnarray}\label{Loewnerf}
\int_{\R^n}e^{-\|A_0x\|+t_0} = \min \left\{ \int_{\R^n}e^{-\|Ax\|+t}dx : (A,t) \in  \mathcal {A} \times \mathbb{R}, \|Ax\|-t \leq \psi(x)\right\}.
\end{eqnarray}
We show that for every affine map $B$ we have $B(Lf)=L(Bf)$.
\begin{equation}\label{linkeSeite}
B(Lf)(x)=e^{-\|A_0 Bx\|+t_0}.
\end{equation}
 On the other hand, \(L(B f)\) arises from the solution to the following minimization problem, 
 \begin{eqnarray*}
&&\min\left\{ \int_{\R^n}e^{-\|Ax\|+t}dx : (A,t) \in  \mathcal {A} \times \mathbb{R}, \|Ax\|-t \leq \psi(Bx)\right\}  \\
&&=\min\left\{  \frac{1}{|\det B| } \int_{\R^n}e^{-\|A B^{-1} y\|+t}dy : (A,t) \in  \mathcal {A} \times \mathbb{R}, \|A B^{-1} y\|-t \leq \psi(y)\right\}\\
&&=  \frac{1}{|\det B| } \int_{\R^n}e^{-\|A_0y\|+t_0} dy
=  \int_{\R^n}e^{-\|A_0 Bx\|+t_0} dx.
\end{eqnarray*}
The second last equality holds by (\ref{Loewnerf}). This means that 
$$
L(B f) (x) = e^{-\|A_0 Bx\|+t_0} = B(Lf)(x),
$$
where we have used (\ref{linkeSeite}) in the last identity.
\par
Now we show the continuity of $L$.
Let $(T_{m}, b_m, t_m)_{m=1}^{\infty}$ be the minimizers of $f_m$ and $(T_{0}, b_0, t_0)$ be the minimizer of $f$.
By Lemma \ref{UniformBounded1}  there are subsequences $(t_{m_{j}})_{j=1}^{\infty}$, $(b_{m_{j}})_{j=1}^{\infty}$ 
and $(T_{m_{j}})_{j=1}^{\infty}$ that converge to some $\overline t_{0}$, $\overline b_{0}$ and $\overline T_{0}$. 
We want to argue now that $\overline t_{0}=t_{0}$, $\overline b_{0}=b_{0}$ and $\overline T_{0}=T_{0}$. 
For the ease of notation we rename the subsequence $(t_{m_{j}})_{j=1}^{\infty}$, $(b_{m_{j}})_{j=1}^{\infty}$ 
and $(T_{m_{j}})_{j=1}^{\infty}$ by $(t_{m_{}})_{m=1}^{\infty}$, $(b_{m_{}})_{m=1}^{\infty}$ 
and $(T_{m_{}})_{m=1}^{\infty}$.
\par
Let $C_{k}$, $k\in\mathbb N$, be compact subsets of $\operatorname{int}(\operatorname{supp}(f))$
such that $C_{k}\subseteq C_{k+1}$ for $k\in\mathbb N$ and
$$
\operatorname{int}(\operatorname{supp}(f))=\bigcup_{k\in\mathbb N}C_{k}.
$$
For alle $k\in\mathbb N$ the functions $f\cdot\1_{C_{k}}$ are log-concave and upper
semi continuous.
Let $(T_{0,k},b_{0,k},t_{0,k})$ be the minimizer for $f\cdot\1_{C_{k}}=e^{-\psi}\1_{C_{k}}$.
The sequence $(f\cdot\1_{C_{k}})_{k=1}^{\infty}$ converges in $L_{1}$ to $f$.
By Lemma \ref{UniformBounded1} the sequences $(T_{0,k})_{k=1}^{\infty}$, $(b_{0,k})_{k=1}^{\infty}$
and $(t_{0,k})_{k=1}^{\infty}$ are bounded. Therefore, there are convergent subsequences
$(T_{0,k_{j}},b_{0,k_{j}},t_{0,k_{j}})$.
We show that  $(T_{0,k_{j}},b_{0,k_{j}},t_{0,k_{j}})$ converges to the
minimizer $(T_{0},b_{0},t_{0})$ of $f=e^{-\psi}$.
\par
We have
$$
f(x)\1_{C_{k}}(x)\leq f(x)\leq\exp(- \|T_{0}(x+b_{0})\|_{}+t_{0}).
$$
Therefore, for all $k\in\mathbb N$
$$
\frac{e^{t_{0,k}}}{|\det T_{0,k}|}\leq\frac{e^{t_{0}}}{|\det T_{0}|}.
$$
This implies
$$
\lim_{j\to\infty}\frac{e^{t_{0},k_{j}}}{|\det T_{0,k_{j}}|}\leq\frac{e^{t_{0}}}{|\det T_{0}|}.
$$
On the other hand,
$$
f(x)\leq\lim_{j\to\infty}\exp( -\|T_{0,k_{j}}(x+b_{0,k_{j}})\|_{2}+t_{0,k_{j}}).
$$
This implies
\begin{equation}\label{TLow-7}
\lim_{j\to\infty}\frac{e^{t_{0,k_{j}}}}{|\det T_{0,k_{j}}|}=\frac{e^{t_{0}}}{|\det T_{0}|}.
\end{equation}
By the uniqueness of the minimizer of $f$ we get
$$
\lim_{j\to\infty}(T_{0,k_{j}},b_{0,k_{j}},t_{0,k_{j}})=(T_{0},b_{0},t_{0}).
$$
We consider now $f_{m}=e^{-\psi_{m}}$ with their
minimizers $(T_{m},b_{m},t_{m})$ and the functions $f_{m}\cdot \1_{C_{k}}$
with their minimizers $(T_{m,k},b_{m,k},t_{m,k})$. Since $C_{k}$ is a compact subset
of the interior of the support and $f$ is by Lemma \ref{ConStetSup} continuous on the interior
of its support
$$
0<\min_{x\in C_{k}}f(x).
$$
By Lemma \ref{L1convtoptwconv} the sequence $(f_{m})_{m=1}^{\infty}$ converges uniformly
on all compact subsets of the interior of the support of $f$. For $k$ we choose
$m_{k}$ so big that
$$
\|(f_{m_{k}}-f)\1_{C_{k}}\|_{\infty}\leq\left(\min_{x\in C_{k}}f(x)\right)\frac{1}{2^{k}}.
$$
It follows
$$
f_{m_{k}}(x)\1_{C_{k}}(x)
\leq f_{}(x)\1_{C_{k}}(x)+\left(\min_{x\in C_{k}}f(x)\right)\frac{1}{2^{k}}
\leq\left(1+\frac{1}{2^{k}}\right) f_{}(x)\1_{C_{k}}(x)
$$
and
$$
f_{m_{k}}(x)\1_{C_{k}}(x)
\geq f_{}(x)\1_{C_{k}}(x)-\left(\min_{x\in C_{k}}f(x)\right)\frac{1}{2^{k}}
\geq\left(1-\frac{1}{2^{k}}\right) f_{}(x)\1_{C_{k}}(x).
$$
With $1+t\leq e^{t}$
$$
f_{m_{k}}(x)\1_{C_{k}}(x)
\leq \left(1+\frac{1}{2^{k}}\right)f(x)\1_{C_{k}}(x)
\leq\exp\left(-\|T_{0,k}(x+b_{0,k})\|+t_{0,k}+\frac{1}{2^{k}}\right)
$$
and thus
\begin{equation}\label{TLow-13}
f_{m_{k}}(x)\1_{C_{k}}(x)
\leq\exp\left(-\|T_{0,k}(x+b_{0,k})\|+t_{0,k}+\frac{1}{2^{k}}\right).
\end{equation}
Moreover
$$
\left(1-\frac{1}{2^{k}}\right)f(x)\1_{C_{k}}(x)
\leq f_{m_{k}}(x)\1_{C_{k}}(x)
\leq\exp\left(-\|T_{m_{k},k}(x+b_{m_{k},k})\|+t_{m_{k},k}\right).
$$
With $(1-\frac{1}{2^{k}})^{-1}\leq1+\frac{1}{2^{k-1}}$
\begin{equation}\label{TLow-14}
f(x)\1_{C_{k}}(x)
\leq\exp\left(-\|T_{m_{k},k}(x+b_{m_{k},k})\|+t_{m_{k},k}+\frac{1}{2^{k-1}}\right).
\end{equation}
By (\ref{TLow-13}) and (\ref{TLow-14})
$$
\frac{e^{t_{m_{k},k}}}{|\det T_{m_{k},k}|}\leq\frac{e^{t_{0,k}+\frac{1}{2^{k}}}}{|\det T_{0,k}|}
\hskip 15mm\mbox{and}\hskip 15mm
\frac{e^{t_{0,k}}}{|\det T_{0,k}|}\leq\frac{e^{t_{m_{k},k}+\frac{1}{2^{k-1}}}}{|\det T_{m_{k},k}|}.
$$
Therefore
$$
e^{-\frac{1}{2^{k-1}}}\frac{e^{t_{0,k}}}{|\det T_{0,k}|}\leq\frac{e^{t_{m_{k},k}}}{|\det T_{m_{k},k}|}
\leq e^{\frac{1}{2^{k}}}\frac{e^{t_{0,k}}}{|\det T_{0,k}|}
$$
and by (\ref{TLow-7}) of  Lemma \ref{LemConvL11}
$$
\frac{e^{t_{0}}}{|\det T_{0}|}=\lim_{k\to\infty}\frac{e^{t_{m_{k},k}}}{|\det T_{m_{k},k}|}.
$$
Since $f_{m_k}\geq f_{m_k}\cdot\1_{C_{k}}$ we have
$$
\frac{e^{t_{m_{k}}}}{|\det T_{m_{k}}|}
\geq
\frac{e^{t_{m_{k},k}}}{|\det T_{m_{k},k}|}
$$
and consequently
\begin{equation}\label{TLow-15}
\frac{e^{t_{0}}}{|\det T_{0}|}
\leq\liminf_{k\to\infty}\frac{e^{t_{m_{k}}}}{|\det T_{m_{k}}|}.
\end{equation}
By Lemma \ref{LemConvL12} for every $\epsilon>0$ we can choose $m_{0}$ big enough so that
for all $m\geq m_{0}$
$$
\psi_{m}(x)\geq(1-\epsilon)\|T_{0}(x+b)\|-t_{0}-\epsilon.
$$
Therefore
$$
\frac{e^{t_{m}}}{|\det T_{m}|}\leq\frac{e^{t_{0}+\epsilon}}{(1-\epsilon)^{n}|\det T_{0}|}
$$
and 
\begin{equation}\label{TLow-16}
\limsup_{m\to\infty}\frac{e^{t_{m}}}{|\det T_{m}|}\leq\frac{e^{t_{0}}}{|\det T_{0}|}.
\end{equation}
By (\ref{TLow-15}) and (\ref{TLow-16}) we get
$$
\limsup_{m\to\infty}\frac{e^{t_{m}}}{|\det T_{m}|}=\frac{e^{t_{0}}}{|\det T_{0}|}.
$$
By the uniqueness of the minimizer of $f$ we get
$$
\lim_{j\to\infty}(T_{m},b_{m},t_{m})=(T_{0},b_{0},t_{0}).
$$
This implies that $L(f_m)  (x) = e^{-\|T_{m}(x+b_{m})\|+t_{m}} \rightarrow L(f)(x) = e^{-\|T(x+b)\|+t}$ pointwise and hence in $L_1$ by Lemma \ref{L1convtoptwconv}.
\end{proof}

\section{The John function of a log-concave function} \label{John}

The John function of a log-concave function was first introduced in \cite{Alonso-Gutierrez2017}.  It is also recovered in \cite{LiSchuettWerner2019}.
The definition is  as follows.
\vskip 2mm
\noindent
\begin{definition} \label{johnfcn} \cite{Alonso-Gutierrez2017}
Let \(f: \R^n\to \R^+\), \(f(x)=e^{-\psi(x)}\) be a nondegenerate,  integrable   log-concave function.
Then the John  function $J(f)$ of $f$ is defined as
\[ J(f)(x)= t_0  \  \1_{A_0 B_2^n} = t_0  \  \1_{\mathcal{E}_f} \]
where  \( (t_0, A_0 )\in \mathbb{R} \times \mathcal{A}\) is the solution   
to the maximization problem
\begin{equation} \label{defmaximization}
\max\{ t |\det A| :  t \le \|f\|_\infty, A\in \mathcal{A} \} \hskip 3mm  \text{subject to} \hskip 3mm   t \  \1_{A B_2^n}\le f.
\end{equation}
\end{definition}
\vskip 2mm
\noindent
It was shown in \cite{Alonso-Gutierrez2017}, and again in  \cite{LiSchuettWerner2019},  that the maximization  problem (\ref{defmaximization}) 
has  a solution  \( (t_0, A_0 )\) where the number $t_0$ is unique and the affine map $A_0$ 
 is unique  up to right orthogonal transformations. Thus the  John  function is well defined.
\par
\noindent
{\bf Remark.} A different definition of  John  function was put forward in \cite{IvanovNazodi} which is also  an affine covariant mapping,
We concentrate on the one given above. For the  one in \cite{IvanovNazodi}, it can be shown similarly.

\vskip 2mm
\noindent
The following theorem is the main theorem of this section.
\par
\noindent

\begin{theorem} \label{T-John}
Let $f = \exp(-\psi)$ be a function in $LC$.  Then the  John operator $J: LC \to LC$ mapping $f$ to its  John function $J(f)$ is an affine covariant mapping.
\end{theorem}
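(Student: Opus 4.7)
Given an affine map $B = T_B + b \in \mathcal A$, I would prove $J(Bf) = B(J(f))$ by direct change of variables in the maximization problem (\ref{defmaximization}). The substitution $y = Bx$ converts the constraint $t\1_{AB_2^n}(x) \leq (Bf)(x) = f(Bx)$ into $t\1_{(BA)B_2^n}(y) \leq f(y)$, using $\1_{AB_2^n}(B^{-1}y) = \1_{(BA)B_2^n}(y)$. Writing $A' = BA$, one has $|\det T_A| = |\det T_B|^{-1}|\det T_{A'}|$, so the maximization problem for $Bf$ is, up to the positive factor $|\det T_B|^{-1}$, precisely the maximization problem for $f$. Its maximizer is $(t_0, A_0)$, the John data of $f$, so the optimal $A$ for $Bf$ equals $B^{-1}A_0$ and
$$J(Bf)(x) = t_0\1_{(B^{-1}A_0)B_2^n}(x) = t_0\1_{A_0B_2^n}(Bx) = (B(J(f)))(x).$$

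\textbf{Continuity.} Let $f_m \to f$ in $L_1$, with John data $(t_m, A_m = T_m + a_m)$ for $f_m$ and $(t_0, A_0 = T_0 + a_0)$ for $f$. Following the blueprint of Theorem \ref{T-Low}, the plan is to establish uniform bounds on $(t_m)$, $(\|T_m\|_{\operatorname{Op}})$, and $(\|a_m\|)$, pass to a convergent subsequence, identify its limit as the John data of $f$ by uniqueness, and conclude $L_1$-convergence via Lemma \ref{L1convtoptwconv}. The upper bound $t_m \leq \|f_m\|_\infty$ is controlled by Lemma \ref{convergenceinfnorm}. For the key positive lower bound on $t_m|\det T_m|$, I would use that $A_0 B_2^n$ is a compact subset of $\supp(f)$ on which $f \ge t_0$, so by Lemma \ref{L1convtoptwconv}(ii) one has $f_m \ge t_0 - \eps$ on $A_0B_2^n$ for $m$ large; hence $(t_0-\eps, A_0)$ is feasible for $f_m$ and optimality gives $t_m|\det T_m| \ge (t_0-\eps)|\det T_0|$. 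Lemma \ref{UnifEst1} then forces $A_m B_2^n \subseteq B_2^n(\rho(\tau - \log t_m))$, so $|\det T_m| \le \rho^n(\tau - \log t_m)^n$; combining this with the lower bound on $t_m|\det T_m|$ precludes $t_m \to 0$ (since $t(\log(1/t))^n \to 0$) and yields a positive lower bound on $t_m$. The volume bound $t_m|\det T_m|\vol(B_2^n) \le \|f_m\|_1$ then bounds $|\det T_m|$ above, while the uniform containment of $A_m B_2^n$ in a ball bounds both $\|T_m\|_{\operatorname{Op}}$ and $\|a_m\| = \|A_m(0)\|$.

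Along any convergent subsequence $(t_{m_k}, T_{m_k}, a_{m_k}) \to (\tilde t, \tilde T, \tilde a)$, with $\tilde T$ nonsingular because $|\det T_m|$ is bounded below, I would verify that the pair $(\tilde t, \tilde A)$, with $\tilde A = \tilde T + \tilde a$, solves the John problem for $f$. Feasibility $\tilde t\1_{\tilde A B_2^n} \le f$ follows from $t_{m_k}\1_{A_{m_k}B_2^n} \le f_{m_k}$ by combining Hausdorff convergence of $A_{m_k}B_2^n$ to $\tilde A B_2^n$ with pointwise a.e.\ convergence of $f_{m_k}$ (and upper semicontinuity of $f$ on the boundary). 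Optimality $\tilde t|\det \tilde T| \ge t_0|\det T_0|$ comes from letting $k \to \infty$ and then $\eps \to 0$ in $t_{m_k}|\det T_{m_k}| \ge (t_0 - \eps)|\det T_0|$; the reverse inequality follows from feasibility of $(\tilde t,\tilde A)$ for $f$ and the optimality of $(t_0, A_0)$. Uniqueness of the John ellipsoid (stated after Definition \ref{johnfcn}) forces $\tilde t = t_0$ and $\tilde A B_2^n = A_0 B_2^n$, so every subsequential limit yields the same John function, and $J(f_m) = t_m\1_{A_m B_2^n}$ converges pointwise a.e.\ to $J(f) = t_0\1_{A_0 B_2^n}$, hence in $L_1$ by Lemma \ref{L1convtoptwconv}(i). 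The main obstacle I expect is this lower bound on $t_m$: it requires threading the feasibility estimate $t_m|\det T_m| \ge (t_0-\eps)|\det T_0|$ through the geometric estimate $|\det T_m| \le \rho^n(\tau - \log t_m)^n$ coming from Lemma \ref{UnifEst1} to rule out degeneration, and thereafter the remaining work is a routine transplant of the L\"owner strategy, with the sub-level set constraint $\|A x\|-t \le \psi(x)$ replaced by the indicator constraint $t\1_{AB_2^n} \le f$, whose limits are controlled via Hausdorff convergence of ellipsoids.
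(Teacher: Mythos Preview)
Your proof is essentially correct and follows the same subsequence-plus-uniqueness template as the paper, but the technical packaging differs. The paper works directly with the ellipsoids $\mathcal E_{f_m}$: it shows $\mathcal E_{f_m}\subseteq R B_2^n$ via an $L_1$-mass-escape argument, invokes Blaschke's Selection Principle to extract a Hausdorff-convergent subsequence $\mathcal E_{f_{m_j}}\to\overline{\mathcal E}$, checks $\overline t_0\1_{\overline{\mathcal E}}\le f$, and closes with uniqueness. It never decomposes $A_m=T_m+a_m$ and therefore never needs your separate positive lower bound on $t_m$ or a nonsingularity argument for $\tilde T$; nondegeneracy of the limit ellipsoid is implicit in $\alpha\le t_m\operatorname{vol}_n(\mathcal E_{f_m})$ together with $t_m\le\|f_m\|_\infty$. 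Your route via Lemma~\ref{UnifEst1} and the estimate $t(\log(1/t))^n\to0$ is a valid alternative and has the virtue of reusing the L\"owner machinery verbatim, at the cost of a longer argument and the nuisance that $T_m$ is only unique up to right orthogonal transformations (which you handle correctly by passing to ellipsoids at the end).

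One genuine small gap: you assert that $A_0B_2^n=\mathcal E_f$ is a compact subset of $\operatorname{supp}(f)$ on which Lemma~\ref{L1convtoptwconv}(ii) gives uniform convergence, hence $(t_0-\eps,A_0)$ is feasible for $f_m$. But $\mathcal E_f$ may touch $\partial\,\overline{\operatorname{supp}(f)}$, where pointwise convergence can fail; Lemma~\ref{L1convtoptwconv}(ii) only gives uniform convergence on compact subsets of the \emph{interior}. The paper fixes this by shrinking to $(1-\delta)\mathcal E_f$ (using $0\in\operatorname{int}(\operatorname{supp}(f))$) before invoking uniform convergence, obtaining feasibility of $(t_0-\eta,(1-\delta)A_0)$ instead. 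With this adjustment your lower bound becomes $t_m|\det T_m|\ge(t_0-\eta)(1-\delta)^n|\det T_0|$, and the rest of your argument goes through unchanged.
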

\vskip 2mm
\noindent
The next corollary is  again an immediate consequence of  the theorem, together  with Remark \ref{remark2}.
\begin{corollary} \label{C-John}
Let $f = \exp(-\psi)$ be a function in $LC$. Then  for all $\lambda \in \mathbb{R}$, 
$$ 
    g(Jf), \hskip 2mm   s(Jf),  \hskip 2mm  \lambda g(Jf)+ (1-\lambda)   s(Jf)
$$
are affine contravariant points.
\end{corollary}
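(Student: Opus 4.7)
The plan is to follow the template of Theorem \ref{T-Low}: establish affine covariance by a direct change-of-variables calculation, and continuity by a compactness-plus-uniqueness argument, using the fact (recalled after Definition \ref{johnfcn}) that the John maximizer is unique up to right orthogonal transformations of $T_0$, so that the ellipsoid $A_0 B_2^n$ and the level $t_0$ are uniquely determined. For affine covariance, let $B \in \mathcal A$. Unpacking the John maximization for $Bf$, the constraint $s\,\1_{C B_2^n} \le Bf$ becomes, after the substitution $y = Bx$, the constraint $s\,\1_{BC B_2^n} \le f$; setting $\tilde C = BC$ gives $|\det C| = |\det \tilde C|/|\det B|$, so the problem for $Bf$ is, up to the positive factor $1/|\det B|$, the John problem for $f$ in the variable $\tilde C$. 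Its maximizer is $(\tilde C, s) = (A_0, t_0)$, yielding $C = B^{-1}A_0$ and $J(Bf)(x) = t_0\,\1_{(B^{-1}A_0) B_2^n}(x)$. Since $B(J(f))(x) = J(f)(Bx) = t_0\,\1_{A_0 B_2^n}(Bx) = t_0\,\1_{B^{-1}(A_0 B_2^n)}(x) = t_0\,\1_{(B^{-1}A_0) B_2^n}(x)$, we conclude $J(Bf) = B(J(f))$.

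For continuity, let $f_m \to f$ in $L_1$ and write $J(f_m) = t_m\,\1_{A_m B_2^n}$ with $A_m = T_m + a_m$, and $J(f) = t_0\,\1_{A_0 B_2^n}$ with $A_0 = T_0 + a_0$. First I establish uniform boundedness of $(t_m, T_m, a_m)$. The bound $t_m \le \|f_m\|_\infty$ combined with $\|f_m\|_\infty \to \|f\|_\infty$ (Lemma \ref{convergenceinfnorm}) handles $t_m$ from above. Since $A_0 B_2^n \subseteq \operatorname{supp}(f)$ implies $\operatorname{int}(A_0 B_2^n) \subseteq \operatorname{int}(\operatorname{supp}(f))$, for every $\varepsilon \in (0,1)$ the shrunk ellipsoid $A^\varepsilon B_2^n$ with $A^\varepsilon = (1-\varepsilon)T_0 + a_0$ is a compact subset of $\operatorname{int}(\operatorname{supp}(f))$ on which $f \ge t_0$; Lemma \ref{L1convtoptwconv}(ii) then gives $f_m \ge (1-\varepsilon)t_0$ on $A^\varepsilon B_2^n$ for $m$ large, so $((1-\varepsilon)t_0, A^\varepsilon)$ is admissible for the John problem for $f_m$, and
\be
t_m |\det T_m| \ge (1-\varepsilon)^{n+1}\, t_0 |\det T_0|.
\ee
Lemma \ref{UnifEst1} confines $A_m B_2^n \subseteq \{f_m \ge t_m\}$ inside a Euclidean ball of radius $\rho(t - \log t_m)$, so $|\det T_m| \le (\rho(t - \log t_m))^n$; thus $t_m \to 0$ along a subsequence would force $t_m |\det T_m| \to 0$, contradicting the displayed lower bound. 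Hence $t_m$ is bounded away from zero, and then $|\det T_m|$, $\|T_m\|_{\operatorname{Op}}$, and $\|a_m\|$ are all bounded.

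By compactness, every subsequence has a further subsequence with $(t_m, T_m, a_m) \to (t_\infty, T_\infty, a_\infty)$, and $t_m|\det T_m|$ bounded below guarantees $T_\infty$ invertible. I identify $(t_\infty, A_\infty)$, $A_\infty = T_\infty + a_\infty$, with the John maximizer of $f$. Hausdorff convergence $A_m B_2^n \to A_\infty B_2^n$ places each $x \in \operatorname{int}(A_\infty B_2^n)$ into $A_m B_2^n$ eventually, so $f_m(x) \ge t_m$; combined with $f_m \to f$ pointwise a.e.\ (Lemma \ref{L1convtoptwconv}(i)), this yields $f \ge t_\infty$ almost everywhere on $\operatorname{int}(A_\infty B_2^n)$, and upper semi-continuity of $f$ (closedness of $\{f \ge t_\infty\}$) extends the inequality to the closure $A_\infty B_2^n$. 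So $(t_\infty, A_\infty)$ is admissible, giving $t_\infty|\det T_\infty| \le t_0|\det T_0|$; letting $\varepsilon \to 0$ in the displayed bound gives the reverse, so equality holds, and uniqueness forces $A_\infty B_2^n = A_0 B_2^n$ and $t_\infty = t_0$. Since every subsequence has the same limit, the full sequence $(t_m, A_m B_2^n)$ converges to $(t_0, A_0 B_2^n)$; then $J(f_m) \to J(f)$ pointwise a.e., and $\|J(f_m)\|_1 = t_m|\det T_m|\operatorname{vol}(B_2^n) \to \|J(f)\|_1$, so $L_1$ convergence follows from Lemma \ref{l1conviffconvofinteg}.

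The main obstacle is the joint control of $t_m$ and $|\det T_m|$: the upper bound on $t_m$ is free, but ruling out the degeneracy $t_m \to 0$ with $|\det T_m| \to \infty$ requires the interplay between the test-ellipsoid lower bound on $t_m|\det T_m|$ and the exponential tail estimate from Lemma \ref{UnifEst1}. Once this uniform boundedness is secured, the uniqueness-of-maximizer argument proceeds in direct analogy with the L\"owner case.
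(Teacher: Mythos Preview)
What you have written is a proof of Theorem~\ref{T-John} (that $J$ is an affine covariant mapping), not of Corollary~\ref{C-John}. In the paper the corollary is dispatched in one line: once $J\in\mathfrak{A}$ is known, Remark~\ref{remark2}(i) gives $p\circ J\in\mathfrak{P}$ for every $p\in\mathfrak{P}$, and Proposition~\ref{example2} says $g,s\in\mathfrak{P}$; the affine combination is again in $\mathfrak{P}$ by the same remark. You should add this final sentence explicitly, since as written your argument establishes $J\in\mathfrak{A}$ but never mentions $g$, $s$, or the composition.

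Your proof of $J\in\mathfrak{A}$ itself is correct and follows the same overall strategy as the paper's: a test-ellipsoid lower bound on $t_m\operatorname{vol}_n(\mathcal{E}_{f_m})$, a uniform containment $\mathcal{E}_{f_m}\subseteq RB_2^n$, then compactness plus uniqueness of the maximizer. The technical route differs slightly. You obtain the uniform containment from Lemma~\ref{UnifEst1} and explicitly rule out $t_m\to 0$ by playing the exponential tail bound against the product lower bound; the paper instead argues containment directly from $L_1$-convergence and does not isolate the case $t_m\to 0$ (it is absorbed into the Blaschke selection step, since bounded ellipsoids have bounded volume). You work with the parameters $(t_m,T_m,a_m)$ and extract convergent subsequences there; the paper works with the ellipsoids themselves via Blaschke's selection principle. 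Both approaches are valid; yours is a bit more explicit about the parameter space, at the minor cost of having to note that $T_m$ is only determined up to right orthogonal transformations (so one should either fix a canonical representative, e.g.\ $T_m$ symmetric positive definite, or phrase the limit in terms of the ellipsoid $A_mB_2^n$ as you effectively do at the end).
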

\vskip 2mm
\noindent
The affine covariance property of the John function operator was  established in Lemma 2.3 of \cite{Alonso-Gutierrez2017}. 
\par
\noindent
\begin{proposition}[\cite{Alonso-Gutierrez2017}]
	Let \(A\in \mathcal{A}\) be a nonsingular affine map. Then \( J(Af)=A(Jf)\).
\end{proposition}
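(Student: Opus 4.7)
The plan is to reduce the maximization problem defining $J(Af)$ to the one defining $J(f)$ by a change of variables on the set of admissible affine maps; the identity $J(Af)=A(Jf)$ will then fall out of the uniqueness of the maximizer in Definition \ref{johnfcn}.

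First I would write out the two maximization problems side by side. By definition, $(t_{0},A_{0})$ maximizes $t|\det B|$ over $(t,B)\in\R\times\mathcal{A}$ with $t\le\|f\|_{\infty}$ subject to $t\,\1_{BB_{2}^{n}}\le f$, so $J(f)=t_{0}\,\1_{A_{0}B_{2}^{n}}$. For $Af$, since $\|Af\|_{\infty}=\|f\|_{\infty}$ (as $A$ is a bijection of $\R^n$), the analogous problem is to maximize $t|\det B|$ over $(t,B)$ with $t\le\|f\|_{\infty}$ subject to $t\,\1_{BB_{2}^{n}}(x)\le f(Ax)$ for every $x$.

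Next I would carry out the change of variable $y=Ax$ in the pointwise constraint. Because $A$ is a bijection, this constraint holds for all $x$ if and only if $t\,\1_{BB_{2}^{n}}(A^{-1}y)\le f(y)$ for all $y$. The key geometric observation is the set identity $A(BB_{2}^{n})=(AB)B_{2}^{n}$, which follows from associativity of affine composition $A(B(z))=(A\circ B)(z)$; equivalently $\1_{BB_{2}^{n}}(A^{-1}y)=\1_{(AB)B_{2}^{n}}(y)$. Setting $C=AB$, so that $C$ ranges over all of $\mathcal{A}$ as $B$ does, and using the multiplicativity $|\det(AB)|=|\det A|\,|\det B|$ for affine maps, the maximization for $Af$ is, up to the positive constant $1/|\det A|$, precisely the maximization defining $J(f)$:
\[\max\{t|\det C|:t\le\|f\|_{\infty},\,C\in\mathcal{A}\}\quad\text{subject to}\quad t\,\1_{CB_{2}^{n}}\le f.\]

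By the uniqueness asserted after Definition \ref{johnfcn} of $t_{0}$ and of the ellipsoid $A_{0}B_{2}^{n}$ (note that although $A_{0}$ is only unique up to right orthogonal transformations, the image $A_{0}B_{2}^{n}$ is genuinely unique), the maximizer for $Af$ must satisfy $t=t_{0}$ and $(AB)B_{2}^{n}=A_{0}B_{2}^{n}$, i.e.\ $BB_{2}^{n}=A^{-1}A_{0}B_{2}^{n}$. Therefore
\[J(Af)(x)=t_{0}\,\1_{A^{-1}A_{0}B_{2}^{n}}(x)=t_{0}\,\1_{A_{0}B_{2}^{n}}(Ax)=(Jf)(Ax)=A(Jf)(x).\]
No substantial obstacle arises: the argument is essentially bookkeeping about how the optimization data transform under composition with $A$. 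The only delicate points are the set identity $A(BB_{2}^{n})=(AB)B_{2}^{n}$ and the affine determinant identity $|\det(AB)|=|\det A|\,|\det B|$, both of which reduce to routine checks on the linear parts (writing $A=T+a$ with $|\det A|:=|\det T|$).
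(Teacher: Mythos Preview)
Your argument is correct: the change of variables $C=AB$ together with the set identity $A(BB_{2}^{n})=(AB)B_{2}^{n}$ and the multiplicativity of the determinant on linear parts reduces the maximization for $Af$ to that for $f$, and uniqueness of the John ellipsoid and height $t_{0}$ finishes it. The paper itself does not give a proof of this proposition but simply cites Lemma~2.3 of \cite{Alonso-Gutierrez2017}; your write-up is exactly the standard argument one would expect there, so there is nothing substantive to compare.
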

\par
\noindent
It remains to prove the continuity of the John function operator on the set of log-concave functions \(LC\).
Before we do that, we introduce some notation.
Let \( (f_m)_{m=1}^\infty\) and  \( f\) be integrable log-concave functions satisfying \( f_m\to f\) in \(L_1\). By Definition \ref{johnfcn} there are sequences \( (T_m)_{m=1}^\infty\) in \(GL(n)\), 
\( (b_m)_{m=1}^\infty\) in \(\R^n \),  $(t_m)_{m=1}^\infty $  in $\R$ and $T_0\in GL(n)$, $b_0\in \R^n$, $t_0\in \R $  such that 
\[ J(f_m)(x)=t_m \, \1_{T_mB_2^n+b_m}(x) \hskip 5mm  \text{ and }  \hskip 5mm J(f)(x)=t_0\, \1_{T_0B_2^n+b_0}(x).  \]
We introduce  notations \( J_f(b), J_f\)\\
\begin{equation*}
J_f= t_0 \,  |\det T_0 | = \max\left\{t |\det T| : T\in GL(n), b\in \R^n, t\in \R, t\, \1_{TB_2^n+b}\le f \right\}
\end{equation*}
while for fixed \(b\in \R^n\),
\begin{equation*}
J_f(b)= \max\left\{t |\det T| :T\in GL(n),  t\in \R, t\, \1_{TB_2^n+b}\le f \right\}.
\end{equation*}
It's clear that \( J_f=\max\{J_f(b):  b\in \R^n\}\).
With these notations, the above assumptions read
\(J_f=J_f(b_0)\) and 
\( J_{f_m}=J_{f_m}(b_m)\),  for all \(m\). It was shown in the proof of Theorem 1 of \cite{LiSchuettWerner2019} that 
\[ J_f(b)=n!\vol(B_2^n)\left(J_{(f_b)^\circ}(0)\right)^{-1}
=n!\vol_n(B_2^n)\left(J_{f^b}(b)\right)^{-1},\]
where \(f^b\) is the polar function of \(f\) with respect to \(b\). 
\par
\noindent
Note also  that if \(J(f)=t_0\, \1_{T_0B_2^n+b_0}\), the ellipsoid \( T_0 B_2^n+b_0\) centered at \(b_0\) must be the John ellipsoid of the convex body \( G_{f}(t_0)\). Thus the most crucial step towards proving Theorem \ref{T-John} is to show that \( t_m\to t_0\). 
\vskip 2mm
\noindent
{\em Proof of Theorem \ref{T-John}.}
By definition of the John function of $f$ resp. $f_m$ we have that  $t_0  \  \1_{\mathcal{E}_f} \leq f$ resp. $t_m  \  \1_{\mathcal{E}_f{_m}} \leq f_m$.
Let $\delta >0$. We can assume that $0 \in \text{int} \left(\supp(f)\right)$. Then
\begin{equation}\label{convofheight-1}
(1-\delta) \, \mathcal{E}_f \subseteq \text{int} \left(\supp(f)\right).
\end{equation}
Moreover, $(1-\delta) \, \mathcal{E}_f$ is a compact subset of the interior of the support of $f$.
By Lemma \ref{L1convtoptwconv}, the sequence $(f_m)_{m=1}^{\infty}$ converges uniformly on $(1-\delta) \, \mathcal{E}_f$
to $f$.
Hence  for all $\eta>0$, all $\delta>0$ there exists $m_0$ such that for all $m \geq m_0$, for all $x \in (1-\delta) \, \mathcal{E}_f$,
$$
(t_0 -\eta ) \,  \1_{(1-\delta) \mathcal{E}_f} (x)  \leq f_m (x).
$$
This and the definition of the John function imply that for all $m \geq m_0$
\begin{equation} \label{convofheight-2}
(t_0 -\eta ) \vol_n \left((1-\delta) \mathcal{E}_f \right)   \leq t_m \vol_n \left(\mathcal{E}_{f_m} \right) .
\end{equation}
It follows
\begin{equation}\label{convofheight-3}
t_0  \vol_n \left( \mathcal{E}_f \right) \leq \liminf_{m\to\infty}  t_m \vol_n( \mathcal{E}_{f_m}).
\end{equation}
Therefore
$$
0<t_0  \vol_n \left( \mathcal{E}_f \right) \leq \liminf_{m\to\infty}  t_m \vol_n( \mathcal{E}_{f_m})
\leq \limsup_{m\to\infty}  t_m \vol_n( \mathcal{E}_{f_m})
\leq \limsup_{m\to\infty}\|f_{m}\|_{L_{1}}\leq\|f\|_{L_{1}}.
$$
We put $2 \alpha = t_0 \, \vol_n \left( \mathcal{E}_f \right)$.  As  $\|f\|_{L_1} >0$, $\alpha >0$. 
We can choose $\eta>0$ and  $\delta>0$ small enough so that 
$$
\alpha \leq (t_0 -\eta ) \vol_n \left((1-\delta) \mathcal{E}_f \right) .
$$
Therefore, for all $m \geq m_0$, $\alpha  \leq t_m \vol_n \left(\mathcal{E}_{f_m} \right)$. 
There exists $R >0$ such that for all $m \in \mathbb{N}$, 
\begin{equation}\label{convofheight-4}
\mathcal{E}_{f_m}  \subseteq R B^n_2. 
\end{equation}
Suppose not. Then for all $R >0$ there is $m \in \mathbb{N}$ such  that $\mathcal{E}_{f_m} \nsubseteq R B^n_2$. 
There is $\rho >0$ such that $0 \leq \int_{(B^n_2(\rho))^c} f dx < \frac{\alpha}{10}$, where $(B^n_2(\rho))^c$ is the complement of 
$B^n_2(\rho)$ im $\mathbb{R}^n$. 
On the other hand, for $m \geq m_0$,
$$
0 < \alpha \leq \int_{\mathbb{R}^n}  t_m\, \1_{\mathcal{E}_{f_m}} dx \leq \int_{\mathbb{R}^n} f_m dx
$$
and consequently
$$
0 < \frac{\alpha}{2}  \leq \int_{B^n_2(\rho)}  t_m\, \1_{\mathcal{E}_{f_m}} dx \leq \int_{B^n_2(\rho)} f_m dx.
$$
Then
\begin{eqnarray*}
\int_{\mathbb{R}^n} | f- f_m|  \geq \int_{B^n_2(\rho)} | f- f_m| \geq  \int_{B^n_2(\rho)}  f_m  -  \int_{B^n_2(\rho)} f 
\geq \frac{\alpha}{2}- \frac{\alpha}{10} = \frac{2\, \alpha}{5}.
\end{eqnarray*}
This contradicts the fact that the sequence $(f_m)_{m=1}^{\infty}$ converges in $L_1$ to $f$. 
Therefore (\ref{convofheight-4}) holds. 
\par
We assume now that the sequence $(t_{m}\1_{\mathcal E_{f_{m}}})_{m=1}^{\infty}$ does not
converge to $t_{0}\1_{\mathcal E_{f}}$ in $L_{1}$. Then the sequence $(t_{m})_{m=1}^{\infty}$
does not converge to $t_{0}$ in $\mathbb R$ or the sequence $(\1_{\mathcal E_{f_{m}}})_{m=1}^{\infty}$
does not converge to $\1_{\mathcal E_{f}}$ in $L_{1}$.
\par
If the sequence $(t_{m})_{m=1}^{\infty}$ does not converge to $t_{0}$ in $\mathbb R$ then there is
a subsequence $(t_{m_{j}})_{j=1}^{\infty}$ with
\begin{equation}\label{convofheight-5}
\lim_{j\to\infty}t_{m_{j}}=\overline t_{0}\ne t_{0}.
\end{equation}
Indeed, since $0\leq t_{m}\leq \|f_{m}\|_{\infty}$ and the sequence
$(\|f_{m}\|_{\infty})_{m=1}^{\infty}$ converges to $\|f_{}\|_{\infty}$ the sequence $(t_{m})_{m=1}^{\infty}$
is a bounded sequence.
\par
If the sequence $(\1_{\mathcal E_{f_{m}}})_{m=1}^{\infty}$
does not converge to $\1_{\mathcal E_{f}}$ in $L_{1}$ then there is $\eta>0$ and
a subsequence $(\1_{\mathcal E_{f_{m_{j}}}})_{j=1}^{\infty}$ with
$$
\eta<\int_{\mathbb R^{n}}|\1_{\mathcal E_{f}}-\1_{\mathcal E_{f_{m_{j}}}}|dx
=\operatorname{vol}_{n}(\mathcal E_{f_{m_{j}}}\triangle \mathcal E_{f}).
$$
It follows that there is $\tilde \eta>0$ such that for all $j\in\mathbb N$
$$
\tilde\eta<d_{H}(\mathcal E_{f_{m_{j}}},\mathcal E_{f}).
$$
By  (\ref{convofheight-4})  and by Blaschke's Selection Principle there is a subsequence 
$\mathcal{E}_{f_{m_j}}$ that converges in the Hausdorff metric 
\begin{equation}\label{convofheight-6}
\lim_{j \to \infty} \mathcal{E}_{f_{m_j}} = \overline{\mathcal{E}}\ne\overline{\mathcal{E}_{f}}
\end{equation}
and $\overline{\mathcal{E}}$ is an ellipsoid. Altogether, there is a subsequence
$(t_{m_{j}}\1_{\mathcal E_{f_{m_{j}}}})_{j=1}^{\infty}$ such that
\begin{equation}\label{convofheight-7}
\lim_{j\to\infty}t_{m_{j}}\1_{\mathcal E_{f_{m_{j}}}}=\overline t_{0}\1_{\overline{\mathcal E}}
\end{equation}
pointwise where $\overline t_{0}\ne t_{0}$ or $\overline{\mathcal E}\ne\overline{\mathcal E}_{f}$.
Since $t_{m}  \1_{\mathcal{E}_{f_{m}} } \leq f_{m}$,  it follows for
all $x \in\R^n\setminus \partial \overline{\supp(f)}$
$$
\overline t_{0}\1_{\overline{\mathcal E}}
=
\lim_{j \to \infty}t_{m_{j}}  \1_{\mathcal{E}_{f_{m_{j}}} }(x) \leq  \lim_{j \to \infty} f_{m_{j}}(x) =f(x).
$$
Consider $x\in\partial \overline{\supp(f)}$. If $x\notin \overline{\mathcal E}$ then 
$$
\overline t_{0}\1_{\overline{\mathcal E}}(x)=0\leq f(x).
$$
If $x\in \overline{\mathcal E}$ then there is a sequence 
$(x_{n})_{n=1}^{\infty}\subseteq \operatorname{int}(\overline{\mathcal E})$ with
$$
\lim_{n\to\infty}x_{n}=x
$$
Then, by the upper semi continuity of $f$
$$
\overline t_{0}=\lim_{n\to\infty}\overline t_{0}\1_{\overline{\mathcal E}}(x_{n})
\leq\lim_{n\to\infty}f(x_{n})\leq f(x).
$$
It follows 
\begin{equation}\label{NochneNummer}
 \overline t_0 \1_{\overline{\mathcal{E}}} \leq f. 
\end{equation}  
By the definition of the John function
\begin{equation}\label{Abschaetz}
\overline t_0 \vol_n(\overline{\mathcal{E}})  \leq  t_0  \vol_n(\mathcal{E}_f).
\end{equation}
With (\ref{convofheight-3}) we thus get, 
$$
t_0  \vol_n \left(\mathcal{E}_f \right) \leq \liminf_{m}  t_m \vol_n \left( \mathcal{E}_{f_m}\right) 
\leq \lim_{j\to\infty} t_{m_j} \vol_n \left( \mathcal{E}_{f_{m_j}}\right) = 
 \overline t_0 \vol_n(\overline{\mathcal{E}})  \leq  t_0  \vol_n(\mathcal{E}_f).
$$
By the uniqueness we get 
$ \overline t_0\1_{\overline{\mathcal{E}}}=t_{0}\1_{\mathcal{E}}$.
$\Box$

{}

\vskip 4mm
\noindent
Ben Li\\
{\small School of Mathematical Sciences}\\
{\small Tel Aviv University}\\
{\small Tel Aviv 69978,
	Israel}\\
{\small \tt liben@mail.tau.ac.il}\\ \\
Carsten Sch\"utt\\
{\small Mathematisches }\\
{\small Universit\"at Kiel}\\
{\small Germany }\\
{\small \tt schuett@math.uni-kiel.de}\\ \\
Elisabeth M. Werner\\
{\small Department of Mathematics \ \ \ \ \ \ \ \ \ \ \ \ \ \ \ \ \ \ \ Universit\'{e} de Lille 1}\\
{\small Case Western Reserve University \ \ \ \ \ \ \ \ \ \ \ \ \ UFR de Math\'{e}matique }\\
{\small Cleveland, Ohio 44106, U. S. A. \ \ \ \ \ \ \ \ \ \ \ \ \ \ \ 59655 Villeneuve d'Ascq, France}\\
{\small \tt elisabeth.werner@case.edu}\\ \\

\end{document}